\def\Xint#1{\mathchoice
    {\XXint\displaystyle\textstyle{#1}}%
    {\XXint\textstyle\scriptstyle{#1}}%
    {\XXint\scriptstyle\scriptscriptstyle{#1}}%
    {\XXint\scriptscriptstyle\scriptscriptstyle{#1}}%
    \!\int}
    \def\XXint#1#2#3{{\setbox0=\hbox{$#1{#2#3}{\int}$}
    \vcenter{\hbox{$#2#3$}}\kern-.5\wd0}}
    \def\dashint{\Xint-}
\renewcommand{\chi}{{\bf 1}}
\theoremstyle{plain}
\newtheorem{theorem}[equation]{Theorem}
\newtheorem{lemma}[equation]{Lemma}
\newtheorem{corollary}[equation]{Corollary}
\newtheorem{proposition}[equation]{Proposition}
\theoremstyle{definition}
\newtheorem{definition}[equation]{Definition}
\theoremstyle{remark}
\newtheorem{remark}[equation]{Remark}
\numberwithin{equation}{section}
 \newcommand{\abs}[1]{{\left\lvert{#1}\right\rvert}}
\newcommand{\norm}[1]{{\left\lVert{#1}\right\rVert}}
\newcommand{\br}[1]{{\left({#1}\right)}}
\newcommand{\bbr}[1]{{\left\{{#1}\right\}}}
\numberwithin{equation}{section}
\def \R{ \mathbb{R} }
\def \re{ \mathbb{R} }
\def \N{ \mathbb{N} }
\def \Scal{ \mathcal{S} }
\def \C { \mathcal{C} }
\def \Gcal { \mathcal{G} }
\def \M { \mathcal{M} }
\def \O { \mathcal{O} }
\def \calI { \mathcal{I} }
\def \hh{ \mathrm{H} }
\def \pp{ \mathrm{P} }
\def \Grm{ \mathrm{G} }
\def \iint{\int\!\!\!\int}
\def\div{\mathop{\rm div}}
\renewcommand{\Re}{{\rm Re}\,}
\def\esssup{\mathop\mathrm{\,ess\,sup\,}}
\DeclareMathOperator{\supp}{supp}
\begin{document}
\allowdisplaybreaks
\author{Li Chen}
\address{Li Chen
\\
Instituto de Ciencias Matem\'aticas CSIC-UAM-UC3M-UCM
\\
Consejo Superior de Investigaciones Cient{\'\i}\-fi\-cas
\\
C/ Nicol\'as Cabrera, 13-15
\\
E-28049 Madrid, Spain} \email{li.chen@icmat.es}

\author{Jos\'e Mar{\'\i}a Martell}
\address{Jos\'e Mar{\'\i}a Martell
\\
Instituto de Ciencias Matem\'aticas CSIC-UAM-UC3M-UCM
\\
Consejo Superior de Investigaciones Cient{\'\i}ficas
\\
C/ Nicol\'as Cabrera, 13-15
\\
E-28049 Madrid, Spain} \email{chema.martell@icmat.es}

\author{Cruz Prisuelos-Arribas}

\address{Cruz Prisuelos-Arribas
\\
Instituto de Ciencias Matem\'aticas CSIC-UAM-UC3M-UCM
\\
Consejo Superior de Investigaciones Cient{\'\i}ficas
\\
C/ Nicol\'as Cabrera, 13-15
\\
E-28049 Madrid, Spain} \email{cruz.prisuelos@icmat.es}

\title[Conical square functions for degenerate elliptic
operators]{Conical square functions for degenerate elliptic
operators}

\thanks{The research leading to these results has received funding from the European Research
Council under the European Union's Seventh Framework Programme (FP7/2007-2013)/ ERC
agreement no. 615112 HAPDEGMT. The authors also acknowledge financial support from the Spanish Ministry of Economy and Competitiveness, through the ``Severo Ochoa Programme for Centres of Excellence in R\&D'' (SEV-2015-0554).}

\date{December 14, 2016. \textit{Revised}: \today}
\subjclass[2010]{42B25, 35J15, 35J70, 47G10, 47D06, 42B37, 42B20}

\keywords{Conical square functions, change of angle formulas, Muckenhoupt weights, 
 degenerate elliptic operators, heat and Poisson semigroups, off-diagonal estimates.}

\begin{abstract}
The aim of this paper is to study the boundedness of different conical square functions that arise naturally  from second order divergence form degenerate elliptic operators. More precisely, let $L_w=-w^{-1}\,\div(w\,A\,\nabla)$ where $w\in A_2$ and $A$ is an $n\times n$ bounded, complex-valued,
uniformly elliptic matrix. D.~Cruz-Uribe and C.~Rios solved the $L^2(w)$-Kato square root problem obtaining that $\sqrt{L_w}$ is equivalent to the gradient on $L^2(w)$. The same authors in collaboration with the second named author of this paper studied the $L^p(w)$-boundedness of operators that are naturally associated with $L_w$, such as the functional calculus, Riesz transforms, or vertical square functions. The theory developed admitted also weighted estimates (i.e., estimates in $L^p(v dw)$ for $v\in A_\infty(w)$), and in particular a class of ``degeneracy'' weights $w$ was found in such a way that the classical $L^2$-Kato problem can be solved. In this paper, continuing this line of research, and also that originated in some recent results by the second and third named authors of the current paper, we study the boundedness on $L^p(w)$ and on $L^p(v dw)$, with $v\in A_\infty(w)$, of the conical square functions that one can construct using the heat or Poisson semigroup associated with $L_w$. As a consequence of our methods, we find  a class of degeneracy weights $w$ for which $L^2$-estimates for these conical square functions hold. This opens the door to the study of weighted and unweighted Hardy spaces and of boundary value problems associated with $L_w$.
\end{abstract}

\maketitle

\tableofcontents

\section{Introduction}

Associated with divergence form elliptic operators with complex bounded coefficients, we find certain operators (functional calculi, Riesz transforms, square functions\dots) that are beyond the classical Calder\'on-Zygmund theory. The study of these operators and the development of a Cal\-der\'on-Zygmund theory for them are subjects of big interest, which mainly  came  up after the solution of the Kato conjecture in  \cite{{AHLMcT}}.
A great contribution to a new theory adapted to singular ``non-integral" operators arising from elliptic operators was done  in \cite{Auscher}, where some key ingredients  exploited ideas from  \cite{BlunckKunstmann, HofmannMartell,ACDH}. The related weighted theory was considered by P. Auscher and the second named author of this paper in \cite{AuscherMartell:I, AuscherMartell:II, AuscherMartell:III}. The study of conical square functions, which played a fundamental role in the development of Hardy spaces associated with elliptic operators done by S. Hofmann, A. McIntosh, and S. Mayboroda in \cite{HofmannMayboroda, HofmannMcIntoshMayboroda},  was later taken in \cite{AuscherHofmannMartell} (in the unweighted case) and completed in \cite{MartellPrisuelos}, see also \cite{BCKYY,ZCJY}.

One can also consider degenerate elliptic operators in which case the associated matrix ceases to be uniformly elliptic and presents some controlled degeneracy in the ellipticity condition.  The case in which the degeneracy is an $A_2$ weight was pioneered by E.B.~Fabes, C.~Kenig, and R.~Serapioni in \cite{FabesKenigSerapioni} (with real symmetric matrices) and the corresponding Kato square root problem was solved by D.~Cruz-Uribe and C.~Rios in \cite{Cruz-UribeRios}. The latter amounted to obtaining that the square root of the operator in question is equivalent to the gradient in the weighted space $L^2(w)$, where $w$ is the $A_2$ weight that controls the degeneracy of the matrix of coefficients. A further step was taken in \cite{CruzMartellRios} (see also \cite{Le, YangZ}) where $L^p(w)$ estimates were established for the associated operators (functional calculi, Riesz transforms,  reverse inequalities, vertical  square functions\dots). In fact, using the Calderón-Zygmund theory for singular ``non-integral" operators developed in \cite{AuscherMartell:I} and the notion of off-diagonal estimates on balls introduced in \cite{AuscherMartell:II}, ``weighted'' estimates (i.e, inequalities in $L^p(v dw)$ with $v\in A_\infty(w)$, see Section \ref{section:prelim}) were also proved. As a consequence, it is shown in \cite{CruzMartellRios} that under some additional assumptions on $w$ (written in terms of some controlled higher integrability) one can actually solve the $L^2$-Kato square root problem, that is,  the square root of the operator and the gradient are comparable on $L^2(\R^n)$. 

In this paper we continue these lines of research and study  several  conical square functions associated with the heat or Poisson semigroup generated by a degenerate elliptic operator  (see \eqref{square-H-1}--\eqref{square-P-3} below).   It is well-known that these conical square functions are important objects in the study of Hardy spaces,  as well as in the study of boundary value problems (see, e.g., \cite{Kenig}). Here we present a theory that allows us to prove boundedness on $L^p(w)$ (again $w\in A_2$ controls the degeneracy of the ellipticity condition) --- we note that in \cite{YangZ} there is a similar result for just the conical square functions in \eqref{square-H-1} in a more restricted range.  Additionally, we obtain weighted estimates in $L^p(v dw)$ with $v\in A_\infty(w)$, which in particular lead us to establish $L^2$-estimates under some conditions on $w$. \

In order to state some of the main results we need to introduce some background (see Section \ref{section:prelim} for precise definitions). 
Fix $w\in A_2$, that is, $w$ is a non-negative locally integrable function such that
$$
[w]_{A_2}:=\sup_B \left(\dashint_B w(x) \, dx\right)\left(\dashint_B w(x)^{-1} \, dx\right)
<\infty.
$$
We will write $L^p(w)$ to denote the $L^p$-space with underlying measure $dw(x)=w(x)\,dx$. 
Let $A$ be an $n\times n$ matrix of complex and $L^\infty$-valued coefficients defined on $\R^n$. We assume that
this matrix satisfies the following uniform ellipticity (or \lq\lq
accretivity\rq\rq) condition: there exist $0<\lambda\le\Lambda<\infty$ such that
\begin{equation}
\lambda\,|\xi|^2
\le
\Re A(x)\,\xi\cdot\bar{\xi}
\quad\qquad\mbox{and}\qquad\quad
|A(x)\,\xi\cdot \bar{\zeta}|
\le
\Lambda\,|\xi|\,|\zeta|,
\label{eq:elliptic-intro}
\end{equation}
for all $\xi,\zeta\in\mathbb{C}^n$ and almost every $x\in \R^n$. We have used the notation
$\xi\cdot\zeta=\xi_1\,\zeta_1+\cdots+\xi_n\,\zeta_n$ and therefore
$\xi\cdot\bar{\zeta}$ is the usual inner product in $\mathbb{C}^n$. 
Associated with this matrix and a given weight $w\in A_{2}$, we define the second order divergence
form degenerate elliptic operator
\begin{align*}
L_w f
=
-w^{-1}\div(w\,A\,\nabla f),
\end{align*}
which is understood in the standard weak sense as a maximal-accretive operator on $L^2(w)$ with domain $\mathcal{D}(L_w)$ by means of a sesquilinear form. Equivalently, $A_w:=w\,A$ is a degenerate elliptic matrix, that is,
$$
\lambda\,w(x)\,|\xi|^2
\le
\Re A_w(x)\,\xi\cdot\bar{\xi}
\quad\qquad\mbox{and}\qquad\quad
|A_w(x)\,\xi\cdot \bar{\zeta}|
\le
\Lambda\,w(x)\,|\xi|\,|\zeta|,
$$
for all $\xi,\zeta\in\mathbb{C}^n$ and almost every $x\in \R^n$. 
In \cite{Cruz-UribeRios}  the Kato problem for these degenerate elliptic operators was solved:
$$
\|L_w^{1/2} f\|_{L^2(w)}\approx \|\nabla f\|_{L^2(w)},
$$
for every $f$ in the weighted Sobolev space $H^1(w)$, that is, the completion of $C^\infty_c(\re^n)$  (the space of infinitely differentiable functions with compact support)    with respect to the norm $\|f\|_{H^1(w)}=\|f\|_{L^2(w)}+\|\nabla f\|_{L^2(w)}$. The operator $-L_w$ generates a $C^0$-semigroup $\{e^{-t L_w}\}_{t>0}$ of contractions on $L^2(w)$ which is called the heat semigroup. Using this semigroup and the corresponding Poisson semigroup $\{e^{-t\,\sqrt{L_w}}\}_{t>0}$ (defined using the classical subordination formula) one can consider several conical square functions associated with $L_w$. Here, for the sake of conciseness, we just introduce two of them (in the body of the paper we study more general versions), one associated with the heat semigroup and another with the Poisson semigroup:
\begin{align}
\mathcal{S}_{\hh}^{L_w}f(x) & = \left(\iint_{\Gamma(x)}|t^2L_w e^{-t^2L_w}f(y)|^2  \frac{dw(y) \, dt}{tw(B(y,t))}\right)^{\frac{1}{2}}
\label{SH}
\\[4pt]
\mathcal{S}_{\pp}^{L_w}f(x)
&=
\left(\iint_{\Gamma(x)}|t^2 L_w\, e^{-t\sqrt{L_w}}f(y)|^2 \frac{dw(y) \, dt}{tw(B(y,t))}\right)^{\frac{1}{2}},
\label{SP}
\end{align}
where $\Gamma(x): =\{(y,t)\in \R^{n+1}_+: |x-y|<t\}$ denotes the cone (of aperture 1) with vertex at $x\in\R^n$ and $w(B(y,t))=\int_{B(y,t)}w(x)\,dx$. 
Taking these as the model of more general conical square functions, the goal of this paper is to find ranges of $p$'s for which $\mathcal{S}_{\hh}^{L_w}$ and/or $\mathcal{S}_{\pp}^{L_w}$ are bounded on $L^p(w)$. Also we will obtain the corresponding weighted norm inequalities, that is, estimates in $L^p(v dw)$ for some range of $p$'s and some collection of $v\in A_\infty(w)$ (see Section \ref{section:prelim} for the precise definitions). As a consequence we will also establish purely unweighted inequalities, that is, estimates in $L^p(\re^n)$ (the $L^p$-space associated with the Lebesgue measure in $\re^n$). As a sample of our results, let us present one containing some of these estimates in the unweighted space $L^2(\re^n)$ (see Corollaries \ref{cor-unweightedHeat}, \ref{cor-unweightedPoisson}, and \ref{cor-PW} for complete statements).  We note that the boundedness on $L^2(\re^n)$ of the conical square functions \eqref{SH} and \eqref{SP} in the uniformly elliptic case (i.e, when $w\equiv 1$) follows at once from the fact that the associated divergence form elliptic operator  has a bounded functional calculus on $L^2(\re^n)$. Here, in contrast, the $L^2(\re^n)$ theory for degenerate elliptic operators becomes non-trivial and our results open the door to considering, for instance, boundary value problems associated with $L_w$ with data in $L^2(\re^n)$.

\begin{theorem}\label{theor:intro}
Let $A$ be  an $n\times n$ complex- valued matrix that satisfies the uniform ellipticity condition \eqref{eq:elliptic-intro}. 

\begin{list}{$(\theenumi)$}{\usecounter{enumi}\leftmargin=1cm \labelwidth=1cm\itemsep=0.2cm\topsep=.2cm \renewcommand{\theenumi}{\alph{enumi}}}

\item Consider $L_w =-w^{-1}\div(w\,A\,\nabla)$,  a degenerate elliptic operator as above, with $w\in A_2$. 

\begin{list}{$\bullet$}{\leftmargin=.6cm \labelwidth=1cm\itemsep=0.2cm\topsep=.2cm}

\item Given $1\le r\le 2$, if $w\in A_r\cap RH_{\frac{n\,r}2+1}$ then $\mathcal{S}_{\hh}^{L_w}$ is bounded on $L^2(\re^n)$.

\item Given $1\le r\le \min\big\{2,1+\frac4n\big\}$, if $w\in A_r\cap RH_{\frac{n\,r}2+1}$ then $\mathcal{S}_{\pp}^{L_w}$ is bounded on $L^2(\re^n)$.

\end{list}

\item Consider $L_\gamma=-|x|^\gamma\div(|x|^{-\gamma}\,A\,\nabla)$ with $-n<\gamma<n$ \textup{(}hence $|x|^{-\gamma}\in A_2$\textup{)}.
\begin{list}{$\bullet$}{\leftmargin=.6cm \labelwidth=1cm\itemsep=0.2cm\topsep=.2cm}

\item If $-n<\gamma<\frac{2\,n}{n+2}$, then $\mathcal{S}_{\hh}^{L_\gamma}$ is bounded on $L^2(\re^n)$.

\item If $-\min\{4,n\}<\gamma<\frac{2\,n}{n+2}$, then $\mathcal{S}_{\pp}^{L_\gamma}$ is bounded on $L^2(\re^n)$.
\end{list}
\end{list}
\end{theorem}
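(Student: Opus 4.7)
The plan is to derive Theorem~\ref{theor:intro} from the weighted $L^p(v\,dw)$-boundedness of the conical square functions $\Scal_{\hh}^{L_w}$ and $\Scal_{\pp}^{L_w}$ for $v\in A_\infty(w)$, which forms the main weighted result of the paper (cf.\ Corollaries \ref{cor-unweightedHeat}, \ref{cor-unweightedPoisson}, and \ref{cor-PW}). The key observation is that choosing $v=w^{-1}$ gives $v\,dw=dx$, hence $L^2(v\,dw)=L^2(\re^n)$, so an $L^2(\re^n)$ bound falls out provided $w^{-1}$ belongs to the relevant subclass of $A_\infty(w)$ at the level $p=2$.

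First I would verify that $w^{-1}\in A_\infty(w)$ whenever $w\in A_2$. This is immediate from the definition: setting $v=w^{-1}$ one computes $v(E)=\int_E w^{-1}\,dw=|E|$, so the ordinary $A_\infty$ condition for $w$, namely $|E|/|B|\le C\br{w(E)/w(B)}^{\delta}$, rewrites precisely as the $A_\infty(w)$ condition for $v$.

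Next I would translate the hypothesis ``$v\in A_q(w)\cap RH_s(w)$'' at $v=w^{-1}$ into conditions on $w$ itself with respect to Lebesgue measure. A short algebraic manipulation using $dw=w\,dx$ converts the $A_q(w)$ and $RH_s(w)$ classes of $w^{-1}$ into ordinary $A\cap RH$ classes of $w$. Matching these with the admissible weighted range in the paper's theorem at $p=2$ --- an interval of $v$'s determined by the off-diagonal range of $e^{-tL_w}$ in $L^p(w)$ --- yields the combined condition $w\in A_r\cap RH_{nr/2+1}$ for some admissible $r\in[1,2]$, respectively $r\in[1,\min\{2,1+4/n\}]$ for $\Scal_{\pp}^{L_w}$. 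The stricter upper bound in the Poisson case reflects the shorter off-diagonal range inherited through the subordination formula.

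For part~(b), I would specialize part~(a) to the power weight $w(x)=|x|^{-\gamma}$ and invoke the classical characterizations
\[
|x|^{-\gamma}\in A_r \iff -n(r-1)<\gamma<n \ (\text{for } r>1), \qquad |x|^{-\gamma}\in A_1 \iff 0\le\gamma<n,
\]
together with $|x|^{-\gamma}\in RH_s \iff \gamma<n/s$. When $\gamma\ge 0$ one picks $r=1$ and $RH_{n/2+1}$ forces $\gamma<2n/(n+2)$; when $\gamma<0$ one picks any $r$ slightly larger than $1-\gamma/n$, and the ceiling for $r$ ($2$ in the heat case, $1+4/n$ in the Poisson case) translates into the lower bound $\gamma>-n$, respectively $\gamma>-\min\{n,4\}$.

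The main obstacle I anticipate is the translation step: computing the precise $A_q(w)\cap RH_s(w)$ indices of $v=w^{-1}$ and matching them against the weighted interval produced in the proof of the paper's $L^p(v\,dw)$ estimates, which requires keeping careful track of the endpoints $p_-(L_w)$, $p_+(L_w)$ of off-diagonal boundedness and their interaction with the $A_\infty(w)$ extrapolation/interpolation machinery. Once that dictionary is set up, parts~(a) and~(b) follow by direct verification.
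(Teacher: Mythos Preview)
Your proposal is correct and follows essentially the same route as the paper: the paper derives Theorem~\ref{theor:intro} from the weighted $L^p(v\,dw)$ results (Theorems~\ref{thm:SF-Heat} and~\ref{thm:SF-Poisson}) by taking $v=w^{-1}$, using the explicit ``duality'' $w^{-1}\in A_p(w)\Leftrightarrow w\in RH_{p'}$ and $w^{-1}\in RH_{s'}(w)\Leftrightarrow w\in A_s$ stated in Section~\ref{section:prelim}, and then tracking the endpoints $p_-(L_w)\le(2_w^*)'$ and $(p_+(L_w))_w^*\ge 2_w^{**}$ to arrive at the $A_r\cap RH_{nr/2+1}$ conditions (Corollaries~\ref{cor-unweightedHeat} and~\ref{cor-unweightedPoisson}); the power-weight case is then read off from the explicit formulas for $r_{w_\alpha}$, $s_{w_\alpha}$ (Corollary~\ref{cor-PW}), exactly as you describe.
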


\medskip

The plan of this paper is as follows. In Section \ref{section:prelim} we present some of the preliminaries needed to state our main results in Section \ref{section:main}. In Section \ref{sectionauxiliaryresults}  we first recall some earlier results concerning off-diagonal estimates for the heat semigroup in question. We then obtain some ``change of angle'' formulas that allow us to compare weighted tent-space norms for cones with different apertures. This control is done in weighted spaces $L^p(vdw)$ with $v\in A_\infty(w)$ and $w\in A_\infty$ and we obtain quantitative bounds depending on the ratio between the  apertures of the cones. We also introduce some $p$-adapted weighted Carleson condition and compare it with some weighted tent-space norms in weighted spaces. Section \ref{section:proofs} contains the proofs of the main results. In Section \ref{section:unweighted} we obtain unweighted estimates, proving in particular Theorem \ref{theor:intro} above. Finally, in the appendix, we formulate some extrapolation  results inspired by those in \cite{CruzMartellPerez} but with the weighted measure space $(\R^n,w)$ replacing $(\re^n,dx)$. The proofs are simply sketched as they follow the lines of the equivalent ones in \cite{CruzMartellPerez}.

\section{Preliminaries}\label{section:prelim}

We turn now to  introducing   some notation and set up our background. Throughout the paper $n$ will denote the dimension of the underlying space $\re^n$
and we will always assume $n\geq 2$.  We write $dx$ to denote the usual Lebesgue measure in $\R^n$ and $L^p(\re^n)$ or simply $L^p$ for $L^p(\re^n,dx)$. 

Given a ball $B$, let $r_B$ denote the radius of $B$.  We write $\lambda B$ for the concentric ball with radius $r_{\lambda B} = \lambda r_B$. Moreover, we set $C_1(B)=4B$ and, for $j\geq 2$, $C_j(B)=2^{j+1}B \setminus 2^j B$.

If we write $\Theta_1\lesssim \Theta_2$ we
mean that there exists a constant $C$ such that $\Theta_1\leq C \Theta_2$.  We write
$\Theta_1\approx \Theta_2$ if $\Theta_1\lesssim \Theta_2$ and $\Theta_2\lesssim \Theta_1$.  The
constant $C$ in these estimates may depend on the dimension $n$ and other (fixed)
parameters that should be clear from the context.  All constants,
explicit or implicit, may change at each appearance.

\subsection{Weights}

By a weight $w$ we mean a non-negative, locally integrable function.
For brevity, we will often write $dw$ for $w\,dx$. In particular, we write $w(E)=\int_E\,dw$ and $L^p(w)=L^{p}(\re^n,dw)$.  We will use the following notation for averages:    given a ball $B$ we write 
\[ \dashint_B f\,dw = \frac{1}{w(B)}\int_B f\,dw
\qquad\text{or}\qquad
\dashint_B f\,dx = \frac{1}{|B|}\int_B f\,dx, \]
and, for $j\geq 2$, we set
$$
\dashint_{C_j(B)} f\, dw= \frac{1}{w(2^{j+1}B)} \int_{C_j(B)} f\,dw.
$$

We state some definitions and basic properties of Muckenhoupt
weights.  For further details,
see~\cite{Duo, GCRF85, Grafakos2}. We say that $w\in A_p$, $1<p<\infty$, if
\[ [w]_{A_p} := \sup_B \left(\dashint_B w(x)\,dx\right) \left(\dashint_B
  w(x)^{1-p'}\,dx\right)^{p-1} < \infty. \]
Here and below the sups run over the collection of balls $B\subset\re^n$.
When $p=1$, we say $w\in A_1$ if
\[ [w]_{A_1} := \sup_B \left(\dashint_B w(x)\,dx\right)  \left(\esssup_{x\in B} w(x)^{-
1}\right)<
\infty.  \]
We say $w\in RH_s$, $1<s<\infty$ if
\[ [w]_{RH_s} := \sup_B \left(\dashint_B w(x)\,dx\right )^{-1}
\left(\dashint_B w(x)^s\,dx\right )^{1/s} < \infty, \]
and
\[ [w]_{RH_\infty} := \sup_B\left(\dashint_B w(x)\,dx\right)^{-1} \left(\esssup_{x\in B} w(x)\right)  <
\infty.  \]
Let
\[ A_\infty := \bigcup_{1\leq p <\infty} A_p  = \bigcup_{1<s\le \infty}
RH_s.  \]
The classes $A_p$, $1\leq p<\infty$, or $RH_s$, $1<s\le\infty$, may be equivalently defined using cubes in $\re^n$
(in place of balls), in which scenario $[w]_{A_p}\approx [w]_{A_p}^{\rm cubes}$ with
implicit constants depending only on $n$ and $p$.

An important property is that if $w\in RH_s$, $1<s\le\infty$, 
\begin{align}\label{pesosineqw:RHq}
\frac{w(E)}{w(B)}\leq  [w]_{RH_{s}}\br{\frac{|E|}{|B|}}^{\frac{1}{s'}}, \quad \forall\,E\subset B,
\end{align}
where $B$ is any ball in $\re^n$. 
Analogously, if
$w\in A_p$,$1\leq p<\infty$, then
\begin{align}\label{pesosineqw:Ap}
 \br{\frac{|E|}{|B|}}^{p}\le [w]_{A_{p}}\frac{w(E)}{w(B)}, \quad \forall\,E\subset B.
\end{align}
A consequence of this, is that $A_p$ weights are doubling measures: 
given $w\in A_p$, for all $\tau\ge 1$ and any ball $B$, $w(\tau B)\le [w]_{A_p} \tau^{pn} w(B).$ This property will be used throughout the paper.

%

As a consequence of this doubling property, we have that with the ordinary Euclidean distance
$|\cdot|$, $(\R^n,dw,|\cdot|)$ is a space of homogeneous type.
In this setting we can define new  classes of weights  $A_p(w)$
and $RH_s(w)$ by replacing Lebesgue measure in the definitions above with
$dw$: e.g., $v\in A_p(w)$ if
\[ [v]_{A_p(w)} = \sup_B \left(\dashint_B v(x)\,dw\right) \left(\dashint_B
  v(x)^{1-p'}\,dw\right)^{p-1} < \infty. \]
From these definitions, it follows at once  that there is a
``duality'' relationship between the weighted and unweighted
$A_p$ and $RH_s$ conditions:  $v=w^{-1} \in A_p(w)$ if and only if $w \in
RH_{p'}$ and $v=w^{-1}\in RH_s(w)$ if and only if $w\in A_{s'}$.

For every measurable set $E\subset \R^n$, we write $vw(E)=(v dw)(E)=\int_E v dw$ and $L^p(v dw)=L^p(\re^n, v(x)\,w(x)\,dx)$. In this direction, 
for every $w\in A_p$, $v\in A_q(w)$, $1\le p,q<\infty$, it follows that
\begin{align}\label{pesosineq:Ap}
\left(\frac{|E|}{|B|}\right)^{p\,q}
\le
[w]_{A_{p}}^q\left(\frac{w(E)}{w(B)}\right)^{q}
\le
[w]_{A_{p}}^q[v]_{A_{q}(w)}
\frac{vw(E)}{vw(B)},\quad \forall\,E\subset B.
\end{align}
Analogously, if $w\in RH_{p}$ and $v\in RH_{q}(w)$, $1< p,q\le\infty$, one has 
\begin{align}\label{pesosineq:RHq}
\frac{vw(E)}{vw(B)}
\leq
[v]_{RH_{q}(w)}\left(\frac{w(E)}{w(B)}\right)^{\frac{1}{q'}}\leq [v]_{RH_{q}(w)}[w]_{RH_{p}}^{\frac1{q'}}\left(\frac{|E|}{|B|}\right)^{\frac{1}{p'\,q'}},\quad \forall\,E\subset B.
\end{align}

\begin{remark}\label{remark:weightedHLM}
Consider the Hardy-Littlewood maximal function
$$
\mathcal{M} f(x):=\sup_{B\ni x} \dashint_B|f(y)|\,dy.
$$
By the classical theory of weights, $w\in A_p$, $1<p<\infty$, if and only if, $\mathcal{M}$ is bounded on $L^p(w)$.

On the other hand, given $w\in A_\infty$, we can introduce the weighted maximal operator $\mathcal{M}^w$:
\begin{align}\label{weightedHLM}
\mathcal{M}^wf(x):=\sup_{B\ni x}\dashint_B |f(y)|\,dw(y).
\end{align} 
Since $w$ is  a  doubling measure, one can also show that $v\in A_p(w)$, $1<p<\infty$, if and only if, $\mathcal{M}^w$ is bounded on $L^p(v dw)$. 
\end{remark}

We continue by introducing  some important notation.
Weights in the $A_p$ and $RH_s$ classes have a self-improving
property: if $w\in A_p$, there exists $\epsilon>0$ such that $w\in
A_{p-\epsilon}$, and similarly if $w\in RH_s$, then $w\in
RH_{s+\delta}$ for some $\delta>0$.  Hereafter, given $w\in  A_{\infty}$, let
\begin{equation}
r_w=\inf\big\{p:\ w\in A_p\big\}, \qquad s_w=\inf\big\{q:\ w\in RH_{q'}\big\}.
\label{eq:defi:rw}
\end{equation}
Note that according to our definition $s_w$ is the conjugated exponent of the one defined in \cite[Lemma 4.1]{AuscherMartell:I}.
Given $0\le p_0<q_0\le \infty$ and $w\in A_{\infty}$,  \cite[Lemma 4.1]{AuscherMartell:I} implies that
\begin{align}\label{intervalrs}
\mathcal{W}_w(p_0,q_0):=\left\{p\in (p_0, q_0): \ w\in A_{\frac{p}{p_0}}\cap RH_{\left(\frac{q_0}{p}\right)'}\right\}
=
\left(p_0r_w,\frac{q_0}{s_w}\right).
\end{align}
If $p_0=0$ and $q_0<\infty$ it is understood that the only condition that stays is $w\in RH_{\left(\frac{q_0}{p}\right)'}$. Analogously, 
if $0<p_0$ and $q_0=\infty$ the only assumption is $w\in A_{\frac{p}{p_0}}$. Finally $\mathcal{W}_w(0,\infty)=(0,\infty)$.

In the same way, for a weight $v\in A_{\infty}(w)$, with $w\in A_\infty$ we set
$$
\mathfrak{r}_v(w):=\inf\big\{r:\ v\in A_{r}(w)\big\}\quad \textrm{and}\quad
\mathfrak{s}_v(w):=\inf\big\{s:\ v\in RH_{s'}(w)\big\}.
$$
For $0\le p_0<q_0\le \infty$ and $v\in A_{\infty}(w)$,  following mutatis mutandis \cite[Lemma 4.1]{AuscherMartell:I}, we have
\begin{align}\label{intervalrsw}
\mathcal{W}_v^w(p_0,q_0):=\left\{p\in(p_0,q_0):\ v\in A_{\frac{p}{p_0}}(w)\cap RH_{\left(\frac{q_0}{p}\right)'}(w)\right\}
=
\left(p_0\mathfrak{r}_v(w),\frac{q_0}{\mathfrak{s}_v(w)}\right).
\end{align}
If $p_0=0$ and $q_0<\infty$, as before, it is understood that the only condition that stays is $v\in RH_{\left(\frac{q_0}{p}\right)'}(w)$. Analogously, if $0<p_0$ and $q_0=\infty$ the only assumption is $v\in A_{\frac{p}{p_0}}(w)$. Finally $\mathcal{W}_v^w(0,\infty)=(0,\infty)$.

\subsection{Degenerate elliptic operators}
Let $A$ be an $n\times n$ matrix of complex and
$L^\infty$-valued coefficients defined on $\R^n$. We assume that
this matrix satisfies the uniform ellipticity condition as introduced in \eqref{eq:elliptic-intro}. 
Associated with this matrix and a given weight $w\in A_{2}$ (which is fixed from now on) we define the second order divergence
form degenerate elliptic operator
\begin{align}\label{degenerateL}
L_w f
=
-w^{-1}\div(w\,A\,\nabla f),
\end{align}
which is understood in the standard weak sense as a maximal-accretive operator on $L^2(w)$ with domain $\mathcal{D}(L_w)$ by means of a sesquilinear form.
These operators
were developed in \cite{FabesJerisonKenig1, FabesJerisonKenig2,FabesKenigSerapioni,Cruz-UribeRios}  and we refer the reader there for complete
details.  Here we borrow some of their results. The operator $-L_w$ generates a $C^0$-semigroup $\{e^{-t L_w}\}_{t>0}$ of contractions on $L^2(w)$ which is called the heat semigroup. 

As in \cite{Auscher, AuscherMartell:II, CruzMartellRios}, we denote by $(p_-(L_w),p_+(L_w))$ the maximal open interval on which the heat semigroup $\{e^{-tL_w}\}_{t>0}$ is uniformly bounded on $L^p(w)$:
\begin{align}\label{p-}
p_-(L_w) &:= \inf\left\{p\in(1,\infty): \sup_{t>0} \|e^{-t^2L_w}\|_{L^p(w)\rightarrow L^p(w)}< \infty\right\},
\\[4pt]
p_+(L_w)& := \sup\left\{p\in(1,\infty) : \sup_{t>0} \|e^{-t^2L_w}\|_{L^p(w)\rightarrow L^p(w)}< \infty\right\}.
\label{p+}
\end{align}
Note that in place of the semigroup $\{e^{-t L_w}\}_{t>0}$ we are using its rescaling $\{e^{-t^2 L_w}\}_{t>0}$. We do so since all the ``heat'' square functions that we consider below are written using the latter and also because in the context of the off-diagonal estimates discussed below it will simplify some computations. 
According to \cite{CruzMartellRios}, 
\begin{equation}\label{p-p+} 
p_-(L_w) \leq (2^*_w)'<2<2^*_w\le p_+(L_w),
\end{equation}
where
$2_w^*=\frac{2\,n\,r_w}{n\,r_w-2}$ if $2<n\,r_w$ and $2_w^*=\infty$ otherwise.

Let us also introduce for every $K\in\mathbb{N}_0:=\N\cup \{0\}$, 
\begin{equation}\label{indexPoisson}
\br{p_+(L_w)}_w^{K,*}:=
\left\{
\begin{array}{ll}
\dfrac{p_+(L_w)n r_w}{n r_w-(2K+1)p_+(L_w)}, &\quad\mbox{ if}\quad(2K+1)p_+(L_w)<nr_w,
\\[10pt]
\infty, &\quad\mbox{ if}\quad(2K+1)p_+(L_w)\ge nr_w.
\end{array}
\right.
\end{equation}
When $K=0$, we write $\br{p_+(L_w)}_w^{*}:=\br{p_+(L_w)}_w^{0,*}$.

Using the heat semigroup and the classical subordination formula,  or the functional calculus for $L_w$,  we can also consider the Poisson semigroup:
\begin{align}\label{formula:SF}
e^{-t\sqrt{L_w}}=\frac1{\sqrt \pi}\int_0^{\infty}e^{-u}u^{\frac{1}{2}}e^{-\frac{t^2}{4u}L_w}\frac{du}{u}.
\end{align}

\subsection{Conical square functions}

One can define different conical square functions associated with $L_w$ as above which all have an expression of the form
$$
\mathcal{Q}^{L_w}f(x)
=\left(\iint_{\Gamma(x)}|T_t^{L_w} f(y)|^2 \frac{dw(y) \, dt}{tw(B(y,t))}\right)^{\frac{1}{2}},
\qquad
x\in\R^n,
$$
where $\Gamma(x): =\{(y,t)\in \R^{n+1}_+: |x-y|<t\}$ denotes the cone (of aperture 1) with vertex at $x\in\R^n$.  More precisely, we introduce the following conical square functions written in terms of the heat semigroup $\{e^{-t L_w}\}_{t>0}$ (hence the subscript $\hh$): for every $m\in \mathbb{N}$,
\begin{align} \label{square-H-1}
\mathcal{S}_{m,\hh}^{L_w}f(x) & = \left(\iint_{\Gamma(x)}|(t^2L_w)^{m} e^{-t^2L_w}f(y)|^2  \frac{dw(y) \, dt}{tw(B(y,t))}\right)^{\frac{1}{2}},
\end{align}
and, for every $m\in \mathbb{N}_0:=\mathbb{N}\cup\{0\}$,
\begin{align}
\mathrm{G}_{m,\hh}^{L_w}f(x)& =\left(\iint_{\Gamma(x)}|t\nabla_y(t^2L_w)^m e^{-t^2L_w}f(y)|^2 \frac{dw(y) \, dt}{tw(B(y,t))}\right)^{\frac{1}{2}},
\label{square-H-2}\\[4pt]
\mathcal{G}_{m,\hh}^{L_w}f(x)&
=
\left(\iint_{\Gamma(x)}|t\nabla_{y,t}(t^2L_w)^m e^{-t^2L_w}f(y)|^2 \frac{dw(y) \, dt}{tw(B(y,t))}\right)^{\frac{1}{2}}.
\label{square-H-3}
\end{align}

In the same manner, let us consider weighted conical square functions associated with the Poisson semigroup $\{e^{-t \sqrt{L_w}}\}_{t>0}$ (hence the subscript $\pp$):  given $K\in \mathbb{N}$,
\begin{align}
\mathcal{S}_{K,\pp}^{L_w}f(x)
&=
\left(\iint_{\Gamma(x)}|(t\sqrt{L_w}\,)^{2K} e^{-t\sqrt{L_w}}f(y)|^2 \frac{dw(y) \, dt}{tw(B(y,t))}\right)^{\frac{1}{2}},
\label{square-P-1}
\end{align}
and for every $K\in \mathbb{N}_0$,
\begin{align}
\mathrm{G}_{K,\pp}^{L_w}f(x)
&=\left(\iint_{\Gamma(x)}|t\nabla_y (t\sqrt{L_w}\,)^{2K} e^{-t\sqrt{L_w}}f(y)|^2 \frac{dw(y) \, dt}{tw(B(y,t))}\right)^{\frac{1}{2}},
\label{square-P-2}
\\[4pt]
\mathcal{G}_{K,\pp}^{L_w}f(x)
&=
\left(\iint_{\Gamma(x)}|t\nabla_{y,t}(t\sqrt{L_w}\,)^{2K} e^{-t\sqrt{L_w}}f(y)|^2 \frac{dw(y) \, dt}{tw(B(y,t))}\right)^{\frac{1}{2}}.
\label{square-P-3}
\end{align}
Corresponding to the cases $m=0$ or $K=0$ we simply write $\mathrm{G}_{\hh}^{L_w}f:=\mathrm{G}_{0,\hh}^{L_w}f$,
$\mathcal{G}_{\hh}^{L_w}f:=\mathcal{G}_{0,\hh}^{L_w}f$,  $\mathrm{G}_{\pp}^{L_w}f:=\mathrm{G}_{0,\pp}^{L_w}f$, and
$\mathcal{G}_{\pp}^{L_w}f:=\mathcal{G}_{0,\pp}^{L_w}f$. Besides, we set $\Scal_{\hh}^{L_w}f:=\Scal_{1,\hh}^{L_w}f$ and $\Scal_{\pp}^{L_w}f:=\Scal_{1,\pp}^{L_w}f$.

Let us observe that in all the above conical square functions the apertures of the cones are taken to be $1$. One could define conical square functions with any given aperture, but these are equivalent in $L^p(w)$ or in $L^p(vdw)$ for every $0<p<\infty$ and $v\in A_\infty(w)$ by the 
change of angle formulas obtained in Proposition \ref{prop:alpha}. 

Notice also that when comparing the conical square functions associated with the heat and Poisson semigroups the parameter $m$ is in correspondence with $K$ (and not with $2K$) since we can rewrite $(t\sqrt{L_w}\,)^{2K}$ as $(t^2 L_w)^{K}$. This is also reflected in the fact that, for instance, $\mathcal{S}_{K,\pp}^{L_w}f$ is controlled (in norm) by $\mathcal{S}_{K,\hh}^{L_w}f$, cf.~Theorem \ref{theor:control-SF-Poisson} part $(b)$. One could define conical square functions for the Poisson semigroup with $(t\sqrt{L_w}\,)^{2K+1}$ in front, which in terms of the heat semigroup, would mean to put $(t^2 L_w)^{m+\frac12}$. The corresponding square functions would also fit into the theory developed in this paper, with appropriate changes. One of the difficulties that will appear is that $(t^2 L_w)^{m+\frac12}e^{-t^2L_w}$ satisfies off-diagonal estimates with polynomial decay and in that scenario one would get restrictions in the range of boundedness or comparison. This will not be pursued in the present paper.

\medskip

\section{Main results}\label{section:main}

We will obtain weighted norm inequalities and boundedness for the square functions  presented in \eqref{square-H-1}-\eqref{square-P-3} in weighted measure spaces. The word  ``weighted" refers to two different concepts here, so we explain them better.
First, note  that the square functions that we consider are associated  with a degenerate elliptic operator, $L_w$, defined as in \eqref{degenerateL}. Thus, the natural underlying measure space is the ``weighted'' space $(\re^n,w)$. For this reason, the square functions introduced above incorporate $w$ in their definition. In this way, an $L^p(w)$ estimate for any of these square functions can be written as a norm of a function in $\re_+^{n+1}$ in the corresponding tent space whose underlying measure is $dw\,dt/t$. Our goal is to obtain estimates in $L^p(w)$ for some range of $p$'s and also to obtain ``weighted'' estimates, that is, estimates in $L^p(v dw)$ with $v\in A_\infty(w)$.

Our first two results establish the boundedness of the conical square functions associated with the heat and Poisson semigroup: 

\begin{theorem}\label{thm:SF-Heat}
Let $L_w$ be a degenerate elliptic operator with $w\in A_2$ and let $v\in A_{\infty}(w)$.

\begin{list}{$(\theenumi)$}{\usecounter{enumi}\leftmargin=1cm \labelwidth=1cm\itemsep=0.2cm\topsep=.2cm \renewcommand{\theenumi}{\alph{enumi}}}

\item For every $m\in\mathbb{N}$, $\Scal_{m,\hh}^{L_w}$ is bounded on  $L^p(vdw)$ for all $p\in \mathcal{W}_v^w(p_-(L_w),\infty)$.

\item For every $m\in\mathbb{N}_0$, $\Grm_{m, \hh}^{L_w}$, and $\Gcal_{m, \hh}^{L_w}$ are bounded on $L^p(vdw)$ for all $p\in \mathcal{W}_v^w(p_-(L_w),\infty)$.
\end{list}
Equivalently, all the previous square functions are bounded on $L^p(vdw)$ for every $p_-(L_w)<p<\infty$ and every  $v\in A_{\frac{p}{p_-(L_w)}}(w)$. In particular, letting $v\equiv 1$,  all these square functions are bounded on $L^p(w)$ for every $p_-(L_w)<p<\infty$.
\end{theorem}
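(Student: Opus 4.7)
The plan is to follow the template developed in \cite{AuscherHofmannMartell, MartellPrisuelos} for uniformly elliptic operators, combined with the off-diagonal estimates and change-of-angle formulas of Section \ref{sectionauxiliaryresults} and the weighted extrapolation of the appendix. The argument splits naturally into an $L^2(w)$ baseline, an extension to $L^p(w)$ on the full range $(p_-(L_w),\infty)$, and a final extrapolation step reaching $L^p(v\,dw)$ for $v\in A_\infty(w)$.

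First, for the $L^2(w)$ estimate, the identity $\int_{|x-y|<t}dw(x)=w(B(y,t))$ and Fubini give the isometry
\begin{equation*}
\int_{\R^n}\iint_{\Gamma(x)}|F(y,t)|^2\frac{dw(y)\,dt}{t\,w(B(y,t))}\,dw(x)=\iint_{\R^{n+1}_+}|F(y,t)|^2\frac{dw(y)\,dt}{t},
\end{equation*}
so each conical $L^2(w)$ bound reduces to a vertical square-function estimate. For $\mathcal{S}_{m,\hh}^{L_w}$ this is the standard quadratic output of the bounded $H^\infty$-functional calculus of $L_w$ on $L^2(w)$ proved in \cite{Cruz-UribeRios,CruzMartellRios}, applied to $\psi(z)=z^m e^{-z}$. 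For $\mathrm{G}_{m,\hh}^{L_w}$ and $\mathcal{G}_{m,\hh}^{L_w}$, I would apply the Kato equivalence $\|L_w^{1/2}g\|_{L^2(w)}\approx\|\nabla g\|_{L^2(w)}$ from \cite{Cruz-UribeRios} pointwise in $t$ with $g=(t^2L_w)^m e^{-t^2L_w}f$ to replace $t\nabla_y$ by $(t^2L_w)^{1/2}$, reducing to an $\mathcal{S}$-type expression; the extra $t\partial_t$ in $\mathcal{G}$ is absorbed using $t\partial_t e^{-t^2L_w}=-2\,t^2L_w\,e^{-t^2L_w}$.

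Next I would extend the bound to $L^p(w)$ for $p\in(p_-(L_w),\infty)$. Below $2$, a Calder\'on-Zygmund decomposition adapted to $L_w$ in the style of \cite{AuscherMartell:II} reduces matters to estimating off-diagonal tails: splitting on annuli $C_j(B)$ and invoking the off-diagonal estimates of Section \ref{sectionauxiliaryresults} for $\{(t^2L_w)^{m+k}e^{-t^2L_w}\}$ (with $k$ large enough to guarantee geometric summability in $j$), and absorbing any cone dilation via the change-of-angle formulas of that same section, yields the target $L^p(w)$ bound. Above $2$, the conical structure is crucial: tent-space duality together with a Carleson-measure control of $(t^2L_w)^m e^{-t^2L_w}$ acting on $L^\infty$ inputs (a consequence of the off-diagonal decay) pushes $p$ all the way up to $\infty$ without restriction from $p_+(L_w)$. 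With an open interval of $L^p(w)$ bounds around $2$ in hand, the weighted extrapolation theorem from the appendix promotes these to the announced $L^p(v\,dw)$ estimates, producing precisely the class $A_{p/p_-(L_w)}(w)$, i.e.\ $\mathcal{W}_v^w(p_-(L_w),\infty)$.

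The principal obstacle I anticipate is the careful bookkeeping for the gradient functions $\mathrm{G}_{m,\hh}^{L_w}$ and $\mathcal{G}_{m,\hh}^{L_w}$: the Kato substitution costs a half-power of $L_w$, which interacts delicately with $m$, with the polynomial decay rate in the off-diagonal estimates, and with the endpoint $p_-(L_w)$, so one must verify that the range recovered after this reduction still coincides with the full $(p_-(L_w),\infty)$. A second delicate point is confirming that the Carleson-type arguments used above $p=2$ carry over to the weighted ambient measure $dw$ with no extra constraints, so that the upper endpoint remains $\infty$ rather than being curtailed by $p_+(L_w)$ as would be the case for the vertical analogs.
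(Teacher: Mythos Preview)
Your overall strategy matches the paper's through the $L^p(w)$ stage: Fubini for $p=2$, a Calder\'on--Zygmund argument below $2$, and a Carleson/tent-space argument above $2$. The paper streamlines the gradient bookkeeping differently---rather than invoking Kato to trade $t\nabla_y$ for $(t^2L_w)^{1/2}$, it uses Theorem~\ref{theor:control-SF-Heat} to reduce \emph{all} the square functions to $\Gcal_{\hh}^{L_w}$ and then works directly with the off-diagonal estimates for $\sqrt{t}\nabla e^{-tL_w}$ (Lemma~\ref{off-diag-sg}(b)). Your Kato substitution is fine on $L^2(w)$ but would introduce half-integer powers $(t^2L_w)^{m+1/2}e^{-t^2L_w}$, which only enjoy polynomial off-diagonal decay and make the $p\neq 2$ analysis more delicate; the paper's route avoids this.

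The genuine gap is your final step. You write that ``the weighted extrapolation theorem from the appendix promotes'' the $L^p(w)$ bounds to $L^p(v\,dw)$. It does not. Rubio de Francia extrapolation (Theorem~\ref{theor:extrapol}) requires as \emph{hypothesis} an inequality valid for \emph{every} weight $v$ in a given class at one fixed exponent; it then transports that inequality to other exponents. Knowing boundedness on $L^p(w)$ for all $p\in(p_-(L_w),\infty)$ is only the single-weight case $v\equiv 1$, which is not an admissible input for extrapolation. You cannot manufacture the $A_{p/p_-(L_w)}(w)$ class this way.

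The paper closes this gap by a different mechanism (Proposition~\ref{prop:Cpo-Mp0-S-heat}): for each $p_-(L_w)<p_0\le 2$ it proves the \emph{pointwise} bound $\widetilde{\mathcal{C}}_{p_0}^{L_w}f(x)\lesssim \mathcal{M}_{p_0}^w f(x)$, using the unweighted $L^{p_0}(w)$ boundedness of $\Gcal_{\hh}^{L_w}$ for the local part and off-diagonal decay for the global part. Then Proposition~\ref{prop:maximal}(a)---which holds for \emph{all} $v\in A_\infty(w)$ via a good-$\lambda$ argument---gives $\|\Gcal_{\hh}^{L_w}f\|_{L^p(vdw)}\lesssim\|\mathcal{M}_{p_0}^w f\|_{L^p(vdw)}$, and the right-hand side is bounded by $\|f\|_{L^p(vdw)}$ precisely when $v\in A_{p/p_0}(w)$. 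Choosing $p_0$ close to $p_-(L_w)$ yields the full range $\mathcal{W}_v^w(p_-(L_w),\infty)$. So the weighted estimate is obtained \emph{directly}, not by extrapolation.
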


\begin{theorem}\label{thm:SF-Poisson}
Let $L_w$ be a degenerate elliptic operator with $w\in A_2$ and let $v\in A_{\infty}(w)$.

\begin{list}{$(\theenumi)$}{\usecounter{enumi}\leftmargin=1cm \labelwidth=1cm\itemsep=0.2cm\topsep=.2cm \renewcommand{\theenumi}{\alph{enumi}}}

\item Given $K\in\mathbb{N}$, $\Scal_{K,\pp}^{L_w}$ is bounded on $L^p(vdw)$ for all $p\in \mathcal{W}_v^w(p_-(L_w),\br{p_+(L_w)}_w^{K,*})$.

\item Given $K\in\mathbb{N}_0$,  $\Gcal_{K,\pp}^{L_w}$ and $\Grm_{K,\pp}^{L_w}$ are bounded on $L^p(vdw)$ for all $p\in \mathcal{W}_v^w(p_-(L_w),\br{p_+(L_w)}_w^{K,*})$.
\end{list}
In particular, letting $v\equiv1$, $\Scal_{K,\pp}^{L_w}$ for $K\in\mathbb{N}$, and $\Gcal_{K,\pp}^{L_w}$ and $\Grm_{K,\pp}^{L_w}$ for $K\in\mathbb{N}_0$, are bounded on $L^p(w)$ for every $p_-(L_w)<p<\br{p_+(L_w)}_w^{K,*}$.
\end{theorem}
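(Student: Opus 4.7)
The plan is to derive Theorem \ref{thm:SF-Poisson} from Theorem \ref{thm:SF-Heat} via the subordination formula \eqref{formula:SF}, converting each Poisson operator in \eqref{square-P-1}--\eqref{square-P-3} into an integral over $u \in (0,\infty)$ of a heat operator at scale $s = t/(2\sqrt{u})$. For instance, a direct computation based on \eqref{formula:SF} gives
\begin{align*}
(t\sqrt{L_w})^{2K}e^{-t\sqrt{L_w}}f = \frac{4^K}{\sqrt{\pi}}\int_0^\infty e^{-u}\,u^{K+\frac12}\,(s^2L_w)^Ke^{-s^2L_w}f\,\frac{du}{u},\qquad s=\frac{t}{2\sqrt{u}},
\end{align*}
and analogous identities hold with $\nabla_y$ inserted on both sides. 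For $\nabla_{y,t}$ the $t$-derivative of the Poisson semigroup produces an extra $\sqrt{L_w}$ factor, which ultimately accounts for the $+1$ in the $2K+1$ appearing in the definition of $\br{p_+(L_w)}_w^{K,*}$ in \eqref{indexPoisson}.

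First I would plug these representations into \eqref{square-P-1}--\eqref{square-P-3} and apply Minkowski's integral inequality to the inner $L^2(dw(y)\,dt/(tw(B(y,t))))$-norm over the cone $\Gamma(x)$, so as to pull the $u$-integration outside. After the substitution $t\mapsto s=t/(2\sqrt{u})$ inside each cone, the resulting inner norm is precisely the corresponding heat conical square function at $x$ but with cone aperture $\alpha(u)=2\sqrt{u}$ in place of $1$. The change of angle formulas from Proposition \ref{prop:alpha} then allow me to revert to aperture $1$ at the cost of a factor $\alpha(u)^{\beta(p,v,w)}$ in $L^p(vdw)$-norm, for an explicit exponent $\beta(p,v,w)$ depending on $p$ and on the Muckenhoupt characteristics of $v$ and $w$. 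At that point Theorem \ref{thm:SF-Heat} controls the aperture-$1$ heat square functions $\Scal_{K,\hh}^{L_w}$, $\Grm_{K,\hh}^{L_w}$, and $\Gcal_{K,\hh}^{L_w}$ on $L^p(vdw)$ for every $p\in \mathcal{W}_v^w(p_-(L_w),\infty)$. Any extra $\sqrt{L_w}$ factor produced by $\partial_t$ is absorbed using the Kato-type equivalence $\|L_w^{1/2}g\|_{L^p(w)}\approx\|\nabla g\|_{L^p(w)}$ on the heat semigroup range, so that it still fits into a $\Grm$- or $\Gcal$-type heat square function.

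It remains to close the $u$-integration. The exponential factor $e^{-u}$ absorbs any polynomial growth as $u\to\infty$, so only the integrability of $u^{K+\frac12}\alpha(u)^{-\beta(p,v,w)}\,du/u$ near $u=0$ is at stake, and this is precisely the quantitative condition that translates into $p<\br{p_+(L_w)}_w^{K,*}$. The full range $\mathcal{W}_v^w(p_-(L_w),\br{p_+(L_w)}_w^{K,*})$ of weights $v\in A_\infty(w)$ then follows from the weighted extrapolation machinery sketched in the appendix, applied with the underlying measure $dw$ in place of Lebesgue measure so as to lift the argument from a single convenient exponent to arbitrary $v$.

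The main obstacle is the second step: securing change-of-angle estimates in $L^p(vdw)$ with the \emph{sharp} polynomial dependence on $\alpha(u)$, uniformly in $v\in A_\infty(w)$, so that $\beta(p,v,w)$ is tight enough to force the integrability threshold at $u=0$ to coincide exactly with $\br{p_+(L_w)}_w^{K,*}$. Any loss of exponent at this step would shrink the range of admissible $p$. Overcoming this obstacle requires exploiting the full $(2K+1)$-derivative structure built into the subordination formula, via off-diagonal estimates for $\nabla e^{-s^2L_w}$ (available only on the heat semigroup range $p\le p_+(L_w)$) combined with iterated Sobolev-type gains controlled by $nr_w$.
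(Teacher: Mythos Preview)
Your overall architecture---subordinate, pull the $u$-integral outside via Minkowski, change aperture, then invoke Theorem~\ref{thm:SF-Heat}---is exactly the one the paper uses (it packages this as Theorem~\ref{theor:control-SF-Poisson}, after which Theorem~\ref{thm:SF-Poisson} is an immediate corollary). For $u>\tfrac14$ the argument you describe works verbatim: the substitution $t\mapsto 2\sqrt{u}\,s$ yields aperture $2\sqrt{u}>1$ and Proposition~\ref{prop:alpha}\,$(i)$ reverts to aperture $1$ with polynomial loss in $u$, which $e^{-u}$ absorbs.

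The two genuine gaps are in the regime $u<\tfrac14$ and in the treatment of the $t$-derivative.

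\textbf{Small $u$.} After the substitution $t=2\sqrt u\,s$ the denominator becomes $w(B(y,2\sqrt u\,s))$, not $w(B(y,s))$, so the resulting expression is \emph{not} $\mathcal{A}_w^{2\sqrt u}$ of the heat kernel data. If you force it into that form via $w(B(y,2\sqrt u\,s))\gtrsim (2\sqrt u)^{n\widehat r}\,w(B(y,s))$ you lose a factor $(2\sqrt u)^{-n\widehat r}$, and Proposition~\ref{prop:alpha}\,$(ii)$ can recover at most $(2\sqrt u)^{n/(s\widetilde s)}$; the net exponent does not reproduce the threshold \eqref{indexPoisson}. The paper instead (i) applies H\"older in the $y$-integral to pass from $L^2$ to some $L^{\widetilde q}$ with $2<\widetilde q<p_+(L_w)$, (ii) uses Proposition~\ref{prop:Q}, which handles the \emph{combined} aperture and ball-radius rescaling in one step with exponent $n\widehat r\big(\tfrac1s-\tfrac{2}{\widetilde q}\big)$, and (iii) returns to $L^2$ via the off-diagonal bounds $e^{-tL_w}\in\mathcal O(L^2(w)\to L^{\widetilde q}(w))$ of Lemma~\ref{off-diag-sg}. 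The free parameter $\widetilde q$, pushed up to $p_+(L_w)$, is precisely what makes the integrability condition at $u=0$ become $(2K+1)p_+(L_w)$ versus $nr_w$.

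\textbf{The $t$-derivative in $\Gcal_{K,\pp}^{L_w}$.} The identity $t\partial_t e^{-t\sqrt{L_w}}=-t\sqrt{L_w}\,e^{-t\sqrt{L_w}}$ produces a genuine half-power $L_w^{1/2}$, and the Kato equivalence $\|L_w^{1/2}g\|_{L^2(w)}\approx\|\nabla g\|_{L^2(w)}$ is only a norm statement---it cannot replace $L_w^{1/2}$ by $\nabla_y$ pointwise inside the cone. The paper avoids this entirely via the Caccioppoli-type Lemma~\ref{lemma:caccio}: one uses that $u_K:=L_w^K e^{-t\sqrt{L_w}}f$ is a weak solution of $w^{-1}\mathrm{div}_{y,t}(wB\nabla_{y,t}u_K)=0$ in $\mathbb R^{n+1}_+$ (with $B$ the $(n{+}1)\times(n{+}1)$ block extension of $A$) and integrates by parts against a cone cutoff to obtain \eqref{GKP}. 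This reduces $\Gcal_{K,\pp}^{L_w}$ to $\Scal_{K,\hh}^{L_w}$, $\Gcal_{K,\hh}^{L_w}$, and the difference term $\mathfrak G_{K,\pp}^{L_w}$ built from $e^{-t\sqrt{L_w}}-e^{-t^2L_w}$; the last is again controlled by subordination using the mechanism of the previous paragraph.
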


These two results will be proved with the help of some estimates, interesting  in  their own right, which establish that all the previous square functions can be controlled (in the $L^p(v dw)$-norm) by either $\Scal_{\hh}^{L_w}$ or $\Gcal_{\hh}^{L_w}$. Hence matters reduce to proving the boundedness of  these two operators.

In the following two results we  compare  the square functions associated with the heat and Poisson semigroups, respectively.

\begin{theorem}\label{theor:control-SF-Heat}
Let $L_w$ be a degenerate elliptic operator with $w\in A_2$ and take an arbitrary $f\in L^2(w)$. 
\begin{list}{$(\theenumi)$}{\usecounter{enumi}\leftmargin=1cm \labelwidth=1cm\itemsep=0.2cm\topsep=.2cm \renewcommand{\theenumi}{\alph{enumi}}}

\item $\mathcal{S}_{\hh}^{L_w}f(x)\leq\frac{1}{2} \mathcal{G}^{L_w}_{\hh}f(x)$ and $\Grm_{m,\hh}^{L_w}f(x)\le \Gcal_{m, \hh}^{L_w}f(x)$, for every $x\in\mathbb{R}^n $ and  for all $m\in\mathbb{N}_0$.

\item Given $m\in\mathbb{N}$,  $\displaystyle \|\Gcal_{m,\hh}^{L_w}f\|_{L^p(vdw)}\lesssim \|\Scal_{m,\hh}^{L_w}f\|_{L^p(vdw)}$, for all $v\in A_{\infty}(w)$ and $0<p<\infty$.

\item Given $m\in\mathbb{N}$,  $\displaystyle \|\Scal_{m+1,\hh}^{L_w}f\|_{L^p(vdw)}\lesssim \|\Scal_{m,\hh}^{L_w}f\|_{L^p(vdw)}$, for all $v\in A_{\infty}(w)$ and $0<p<\infty$.
\end{list}

As a consequence, for every $m\in\mathbb{N}$, and for all $v\in A_{\infty}(w)$ and $0<p<\infty$ there holds
\begin{equation}\label{eq:all-SH}
\|\Scal_{m,\hh}^{L_w}f\|_{L^p(vdw)}
+
\|G_{m,\hh}^{L_w}f\|_{L^p(vdw)}
+
\|\Gcal_{m,\hh}^{L_w}f\|_{L^p(vdw)}
\lesssim
\|\Scal_{\hh}^{L_w}f\|_{L^p(vdw)}.
\end{equation}

\end{theorem}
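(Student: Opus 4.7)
The plan is to prove (a) first, then (c), and finally (b), so that the inductive comparison from (c) can absorb the $\Scal_{m+1,\hh}^{L_w}$ terms that arise in (b). Part (a) follows from the pointwise identity $\partial_t e^{-t^2L_w}f = -2t\,L_we^{-t^2L_w}f$, which yields $|t\partial_t(e^{-t^2L_w}f)|^2 = 4\,|t^2L_we^{-t^2L_w}f|^2$; combined with $|t\nabla_{y,t}u|^2 \geq |t\partial_t u|^2$ and integrated on $\Gamma(x)$ against the common measure $\frac{dw(y)\,dt}{tw(B(y,t))}$, this gives $\Scal_{\hh}^{L_w}f(x) \leq \tfrac{1}{2}\,\Gcal_{\hh}^{L_w}f(x)$. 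The inequality $\Grm_{m,\hh}^{L_w}f(x) \leq \Gcal_{m,\hh}^{L_w}f(x)$ is immediate from $|t\nabla_y u|^2 \leq |t\nabla_{y,t}u|^2$.

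For (c) the key observation is the semigroup factorization
\[
(t^2L_w)^{m+1}e^{-t^2L_w}f(y)
= 2^{m+1}\,(s^2L_w)\,e^{-s^2L_w}\bigl[F_m(\cdot,s)\bigr](y),\qquad s=t/\sqrt{2},
\]
where $F_m(y,s):=(s^2L_w)^m e^{-s^2L_w}f(y)$. This follows from the splitting $e^{-t^2L_w}=e^{-s^2L_w}e^{-s^2L_w}$, the identity $(t^2L_w)^{m+1}=2^{m+1}(s^2L_w)^{m+1}$, and the functional-calculus commutation of $(s^2L_w)e^{-s^2L_w}$ with $(s^2L_w)^m e^{-s^2L_w}$. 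Inserting this into the cone integral defining $\Scal_{m+1,\hh}^{L_w}f(x)^2$, changing variables $t\mapsto s$, and using the doubling of $w$, the problem reduces to estimating the tent-space integral of $|T_s F_m(\cdot,s)|^2$ with $T_s=(s^2L_w)e^{-s^2L_w}$. Decomposing the spatial integration around $x$ into annuli $C_j(B(x,s))$ and invoking the $L^2(w)$ off-diagonal (Gaffney-type) estimates for $T_s$ recalled in Section~\ref{sectionauxiliaryresults} produces a pointwise estimate
\[
\Scal_{m+1,\hh}^{L_w}f(x)^2 \lesssim \sum_{j\geq 1} e^{-c\,4^j}\,2^{\gamma j}\,\Scal_{m,\hh,\,2^{j+1}}^{L_w}f(x)^2,
\]
where the subscript $2^{j+1}$ indicates the aperture of the underlying cone and $\gamma$ depends on the doubling of $w$. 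Taking $L^p(vdw)$ norms (Minkowski if $p\geq 2$, its $(p/2)$-quasi-version if $0<p<2$) and invoking the change-of-angle formulas from Section~\ref{sectionauxiliaryresults} to reduce each aperture back to $1$ at a polynomial cost in $2^j$, the Gaussian factor $e^{-c\,4^j}$ absorbs this cost and yields (c).

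For (b), decompose $\Gcal_{m,\hh}^{L_w}f(x)^2 = \Grm_{m,\hh}^{L_w}f(x)^2 + H_m(x)^2$, where $H_m$ is the cone integral of the $\partial_t$-component of the integrand. Differentiating $F_m(y,t)=t^{2m}L_w^m e^{-t^2L_w}f(y)$ in $t$ gives the pointwise identity $t\partial_t F_m = 2m\,F_m - 2\,F_{m+1}$, so $H_m(x)^2 \lesssim \Scal_{m,\hh}^{L_w}f(x)^2 + \Scal_{m+1,\hh}^{L_w}f(x)^2$. For the spatial piece, view $u(y):=L_w^m e^{-t^2L_w}f(y)$ as a weak solution of $L_w u = L_w^{m+1}e^{-t^2L_w}f$; the weighted Caccioppoli inequality on balls of radius $\sim t$, combined with a Whitney-type bounded-overlap cover of $B(x,t)$ and the doubling of $w$, gives $\Grm_{m,\hh}^{L_w}f(x)^2 \lesssim \Scal_{m,\hh,\alpha}^{L_w}f(x)^2 + \Scal_{m+1,\hh,\alpha}^{L_w}f(x)^2$ for some fixed $\alpha>1$. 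Applying the change-of-angle formulas to bring the aperture back to $1$ and invoking (c) to eliminate $\Scal_{m+1,\hh}^{L_w}$ completes (b). Finally, \eqref{eq:all-SH} follows by iterating (c) from $m=1$ upward and combining with (a) and (b).

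The main obstacle is the tent-space step in (c): no pointwise heat-kernel bounds are expected in this degenerate setting, so only $L^2(w)$ off-diagonal estimates for $T_s$ are available, and one must control the resulting annular remainders uniformly in $L^p(vdw)$ for every $0<p<\infty$ and every $v\in A_\infty(w)$. The argument succeeds because the Gaussian Gaffney decay $e^{-c\,4^j}$ beats both the doubling growth $2^{\gamma j}$ and the polynomial aperture cost from the change-of-angle formulas; ensuring this balance is quantitative enough for all $p$ and $v$ simultaneously is exactly why the precise estimates of Section~\ref{sectionauxiliaryresults} are needed.
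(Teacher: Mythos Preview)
Your argument is correct. Parts (a) and (c) match the paper's proof essentially verbatim: the paper uses the same factorization $(t^2L_w)^{m+1}e^{-t^2L_w}=2^{m+1}A_{t^2/2}B_{t^2/2,m}$ with $A_s=sL_we^{-sL_w}$, the $L^2(w)$--$L^2(w)$ off-diagonal estimates for $A_s$, and then Proposition~\ref{prop:alpha} to absorb the enlarged apertures, exactly as you describe (the paper works directly at the level of the $L^p(vdw)$ norm rather than first recording a pointwise inequality, but this is only an organizational difference).

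Part (b) is where your route genuinely diverges. The paper does \emph{not} use a Caccioppoli inequality for the spatial gradient. Instead it writes
\[
t\nabla_y(t^2L_w)^me^{-t^2L_w}=2^{m+\frac12}\,\widetilde A_{t^2/2}\,B_{t^2/2,m},
\qquad \widetilde A_s=\sqrt{s}\,\nabla_y e^{-sL_w},
\]
and then re-runs the identical off-diagonal\,$+$\,change-of-angle argument from (c), using that $\widetilde A_s\in\mathcal{O}(L^2(w)\!-\!L^2(w))$ (Lemma~\ref{off-diag-sg}(b)). This makes (b) and (c) formally the same computation. Your Caccioppoli approach is also valid: from $L_w u=h$ with $u=L_w^me^{-t^2L_w}f$, $h=L_w^{m+1}e^{-t^2L_w}f$, the weighted Caccioppoli estimate on $B(x,t)$ yields $\int_{B(x,t)}|t\nabla_y F_m|^2\,dw\lesssim \int_{B(x,2t)}|F_m|^2\,dw+\int_{B(x,2t)}|F_{m+1}|^2\,dw$ (the Whitney cover is actually unnecessary --- take $B=B(x,t)$ directly), which after dividing by $w(B(x,t))$ and integrating in $t$ gives the pointwise bound you claim. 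The trade-off: your argument for (b) is more self-contained, relying only on the ellipticity of $A$ and avoiding the gradient off-diagonal bounds for the semigroup; the paper's argument is more streamlined, since (b) and (c) become a single computation, but it imports the nontrivial fact $\sqrt{t}\nabla e^{-tL_w}\in\mathcal{O}(L^2(w)\!-\!L^2(w))$ from \cite{CruzMartellRios}.
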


\medskip

\begin{theorem}\label{theor:control-SF-Poisson} 
Let $L_w$ be a degenerate elliptic operator with $w\in A_2$ and take an arbitrary $f\in L^2(w)$. 
\begin{list}{$(\theenumi)$}{\usecounter{enumi}\leftmargin=1cm \labelwidth=1cm\itemsep=0.2cm\topsep=.2cm \renewcommand{\theenumi}{\alph{enumi}}}

\item $\Grm_{K,\pp}^{L_w}f(x)\le \Gcal_{K, \pp}^{L_w}f(x)$, for every $x\in\mathbb{R}^n $ and  for all $K\in\mathbb{N}_0$.

\item Given $K\in\mathbb{N}$, $\displaystyle \|\Scal_{K,\pp}^{L_w}f\|_{L^p(vdw)}\lesssim \|\Scal_{K,\hh}^{L_w}f\|_{L^p(vdw)}$, for all $v\in A_{\infty}(w)$ and $p\in\mathcal{W}_v^w(0,\br{p_+(L_w)}_w^{K,*})$.

\item $\displaystyle \|\Gcal_{\pp}^{L_w}f\|_{L^p(vdw)}\lesssim \|\Gcal_{\hh}^{L_w}f\|_{L^p(vdw)}$, for all $v\in A_{\infty}(w)$ and $p\in\mathcal{W}_v^w(0,(p_+(L_w))^{*}_w)$.

 \item Given $K\in\mathbb{N}$, $\displaystyle \|\Gcal_{K,\pp}^{L_w}f\|_{L^p(vdw)}\lesssim \|\Scal_{K,\hh}^{L_w}f\|_{L^p(vdw)}$, for all $v\in A_{\infty}(w)$ and $p\in\mathcal{W}_v^w(0,\br{p_+(L_w)}_w^{K,*})$. 
\end{list}

 As a consequence, for every $K\in\mathbb{N}$, and for all $v\in A_{\infty}(w)$ and $p\in\mathcal{W}_v^w(0,\br{p_+(L_w)}_w^{K,*})$ there holds
\begin{equation}\label{eq:all-SP}
\|\Scal_{K,\pp}^{L_w}f\|_{L^p(vdw)}
+
\|G_{K,\pp}^{L_w}f\|_{L^p(vdw)}
+
\|\Gcal_{K,\pp}^{L_w}f\|_{L^p(vdw)}
\lesssim
\|\Scal_{\hh}^{L_w}f\|_{L^p(vdw)}.
\end{equation}

\end{theorem}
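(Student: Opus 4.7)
\bigskip

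\noindent\textbf{Proof proposal (plan).}
The plan is to deduce part (a) from a trivial pointwise inequality, and to reduce parts (b), (c), (d) to the corresponding heat square function estimates via the subordination formula \eqref{formula:SF}, using Minkowski's integral inequality, doubling, and the change of angle formulas from Proposition \ref{prop:alpha}. The final consequence will then follow by chaining with \eqref{eq:all-SH} from Theorem \ref{theor:control-SF-Heat}. For part (a), since $|\nabla_y F(y,t)|^2\le |\nabla_y F(y,t)|^2+|\partial_t F(y,t)|^2=|\nabla_{y,t}F(y,t)|^2$ pointwise, applying this with $F=(t\sqrt{L_w})^{2K}e^{-t\sqrt{L_w}}f$ integrand-wise in $\Gamma(x)$ yields $\Grm_{K,\pp}^{L_w}f(x)\le \Gcal_{K,\pp}^{L_w}f(x)$.

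For part (b), since $(t\sqrt{L_w})^{2K}=(t^2L_w)^K$, differentiating \eqref{formula:SF} and substituting $s=t/(2\sqrt{u})$ (so $t^2L_w=4us^2L_w$) gives
\[
(t^2L_w)^Ke^{-t\sqrt{L_w}}f(y)=\frac{4^K}{\sqrt{\pi}}\int_0^\infty e^{-u}u^{K-1/2}\,(s^2L_w)^K e^{-s^2L_w}f(y)\,du.
\]
Inserting this into the definition of $\Scal_{K,\pp}^{L_w}$ and applying Minkowski's inequality in the tent-space $L^2$ norm, the change of variable from $t$ to $s$ inside each $x$-integral turns the cone $\{|y-x|<t\}$ into the cone of aperture $2\sqrt{u}$, while $w(B(y,t))=w(B(y,2s\sqrt{u}))$ differs from $w(B(y,s))$ by a controlled polynomial factor in $u$ thanks to the doubling of $w\in A_2$. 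This produces a pointwise bound of the shape
\[
\Scal_{K,\pp}^{L_w}f(x)\lesssim \int_0^\infty e^{-u}u^{K-1/2}(1+u)^\alpha\, \Scal_{K,\hh,\,2\sqrt{u}}^{L_w}f(x)\,du,
\]
where $\Scal_{K,\hh,\beta}^{L_w}$ denotes the heat square function with cone aperture $\beta$. Taking $L^p(vdw)$ norms, Minkowski and Proposition \ref{prop:alpha} reduce everything to the aperture-1 object $\Scal_{K,\hh}^{L_w}$ at the cost of an extra polynomial factor in $\sqrt{u}$; convergence of the resulting $\Gamma$-type integral in $u$ is precisely what forces the upper endpoint $(p_+(L_w))_w^{K,*}$ (the exponent $2K+1$ in \eqref{indexPoisson} records the $K$ powers of $L_w$ in the Poisson operator together with the half-power hidden in the subordination density $u^{K-1/2}$).

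Parts (c) and (d) use the same scheme after commuting the relevant derivatives with the subordination integral. For the spatial gradient the identity extends verbatim with $(s^2L_w)^Ke^{-s^2L_w}$ replaced by $s\nabla_y(s^2L_w)^K e^{-s^2L_w}$, the extra factor $t=2s\sqrt{u}$ being absorbed into the $u$-density; this gives a $\Gcal_{\hh}^{L_w}$-type bound when $K=0$ (case (c)) and a $\Scal_{K+1,\hh}^{L_w}$-type bound (hence by Theorem \ref{theor:control-SF-Heat}(c) a $\Scal_{K,\hh}^{L_w}$-bound) when $K\ge 1$. For the time derivative, the key observation is
\[
t\partial_t\bigl[(t^2L_w)^K e^{-t\sqrt{L_w}}\bigr] = 2K(t^2L_w)^K e^{-t\sqrt{L_w}}-(t^2L_w)^K\,t\sqrt{L_w}\,e^{-t\sqrt{L_w}},
\]
together with the auxiliary subordination identity obtained by differentiating \eqref{formula:SF} in $t$,
\[
t\sqrt{L_w}\,e^{-t\sqrt{L_w}}f = \frac{2}{\sqrt{\pi}}\int_0^\infty e^{-u}u^{-1/2}\,(s^2L_w)e^{-s^2L_w}f\,du,
\]
which shows that $(t^2L_w)^K\, t\sqrt{L_w}\,e^{-t\sqrt{L_w}}f$ is again representable as a subordination-type integral of $(s^2L_w)^{K+1}e^{-s^2L_w}f$. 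Thus only integer powers of $L_w$ ever appear, which avoids the half-integer off-diagonal issue mentioned in the remark after \eqref{square-P-3}, and the same Minkowski/change-of-aperture reasoning goes through within the range $\mathcal{W}_v^w(0,(p_+(L_w))_w^{K,*})$.

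The concluding assertion \eqref{eq:all-SP} is then immediate by chaining the estimates: (a), (b), and (d) give
\[
\|\Scal_{K,\pp}^{L_w}f\|_{L^p(vdw)}+\|\Grm_{K,\pp}^{L_w}f\|_{L^p(vdw)}+\|\Gcal_{K,\pp}^{L_w}f\|_{L^p(vdw)}\lesssim \|\Scal_{K,\hh}^{L_w}f\|_{L^p(vdw)},
\]
and \eqref{eq:all-SH} replaces $\Scal_{K,\hh}^{L_w}$ on the right by $\Scal_{\hh}^{L_w}$. The main obstacle I expect is the bookkeeping: carefully tracking the powers of $u$ coming from the subordination density, the doubling adjustment $w(B(y,2s\sqrt{u}))/w(B(y,s))$, and the aperture cost in Proposition \ref{prop:alpha}, so as to match the integrability range of the $u$-integral exactly with the stated bound $p<(p_+(L_w))_w^{K,*}$ in \eqref{indexPoisson}.
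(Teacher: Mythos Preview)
Your plan for part (a) and the concluding chain are correct and match the paper. The subordination/Minkowski/change-of-angle scheme you sketch for (b) is also exactly what the paper does for the range $u\ge \tfrac14$. The gap is in the small-$u$ regime $0<u<\tfrac14$, and this is precisely where the endpoint $(p_+(L_w))_w^{K,*}$ has to come from.

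After the substitution $t=2\sqrt{u}\,s$ the denominator is $w(B(y,2\sqrt{u}s))$, and to reach $\Scal_{K,\hh}^{L_w}$ you must compare it with $w(B(y,s))$. With doubling alone (i.e.\ $w\in A_{\widehat r}$) you only get $1/w(B(y,2\sqrt{u}s))\lesssim (2\sqrt{u})^{-n\widehat r}/w(B(y,s))$, and since for $2\sqrt{u}<1$ the aperture change from Proposition~\ref{prop:alpha} gives no compensating gain (the smaller-to-larger direction is free pointwise), your $u$-integral near $0$ is $\int_0^{1/4} u^{K+\frac12-\frac{n\widehat r}{4}}\,\frac{du}{u}$. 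This converges only when $n r_w<2(2K+1)$, a condition on $w$ and $K$ that has nothing to do with $p_+(L_w)$; in particular it fails whenever $2(2K+1)\le n r_w$ even though $(p_+(L_w))_w^{K,*}$ may well be infinite. So ``convergence is precisely what forces the upper endpoint $(p_+(L_w))_w^{K,*}$'' is not supported by the ingredients you list: none of subordination, Minkowski, doubling, or Proposition~\ref{prop:alpha} sees $p_+(L_w)$.

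What is missing is an extra step that trades the small aperture for a higher integrability: for $0<u<\tfrac14$ the paper first applies H\"older on the ball to pass from an $L^2$ to an $L^{\widetilde q}$ average with some $2<\widetilde q<p_+(L_w)$, then invokes Proposition~\ref{prop:Q} (which uses $v\in RH_{s'}(w)$) to extract a factor $u^{\frac{\widehat r n}{4s}-\frac{\widehat r n}{2\widetilde q}}$, and finally uses the off-diagonal estimates $e^{-tL_w}\in\mathcal O(L^2(w)-L^{\widetilde q}(w))$ from Lemma~\ref{off-diag-sg} to come back to $\Scal_{K,\hh}^{L_w}$. The resulting exponent $K+\tfrac12+\tfrac{\widehat r n}{4s}-\tfrac{\widehat r n}{2\widetilde q}$ can be made positive exactly under the hypothesis $p\in\mathcal W_v^w(0,(p_+(L_w))_w^{K,*})$, which is how $p_+(L_w)$ enters. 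The paper also reduces to $p=2$ by extrapolation (Theorem~\ref{theor:extrapol}), which simplifies the bookkeeping considerably.

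For (c) and (d) there is a second divergence from the paper. Rather than commuting $t\nabla_{y,t}$ through the subordination integral as you propose, the paper proves a Caccioppoli-type pointwise bound (Lemma~\ref{lemma:caccio}): using that $u_K:=(L_w)^K e^{-t\sqrt{L_w}}f$ is $L_w$-harmonic in $\R^{n+1}_+$ and integrating by parts against cutoffs, one controls $\Gcal_{K,\pp}^{L_w}f$ by an aperture-$2$ version of $\Scal_{K,\hh}^{L_w}f$, $\Gcal_{K,\hh}^{L_w}f$, and a cross term $\mathfrak G_{K,\pp}^{L_w}f$ built from $(t^2L_w)^K(e^{-t\sqrt{L_w}}-e^{-t^2L_w})f$. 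The cross term is then handled by writing $e^{-t\sqrt{L_w}}-e^{-t^2L_w}=-\int \partial_r e^{-r^2L_w}\,dr$ and applying the same $L^{\widetilde q}$/off-diagonal machinery as above. Your direct subordination route for the gradient pieces is a reasonable alternative strategy, but it would still need the missing off-diagonal input for small $u$; as written it does not reach the stated range.
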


\medskip

The proofs of Theorems \ref{thm:SF-Heat}--\ref{theor:control-SF-Poisson}  are in Section \ref{section:main}. Let us note that the non-degenerate versions (i.e., the case when $w\equiv 1$) were established in \cite{MartellPrisuelos} (see also \cite{AuscherHofmannMartell}) and some of the ideas of this paper are borrowed  from there.

\section{Auxiliary results}\label{sectionauxiliaryresults}

\subsection{Off-diagonal estimates}\label{subsectionoff-diagonal}
We recall here the concept of weighted off-diagonal estimates on balls. For more definitions of weighted off-diagonal estimates and a careful study of their properties, we refer to \cite{AuscherMartell:II}. 

\begin{definition}
Let $\{T_t\}_{t>0}$ be a family of linear operators and let $1\leq p\leq q\leq \infty$. Given $w\in A_\infty$, we say that $\{T_t\}_{t>0}$ satisfies $L^p(w)-L^q(w)$ off-diagonal estimates on balls, which will be denoted by
$T_t\in \mathcal O(L^p(w)-L^q(w))$, if there exist $\theta_1,\theta_2,c>0$ such that for any $t>0$ and for any ball $B$ with radius $r_B$, 
\begin{equation}\label{off:BtoB}
\br{\dashint_B \abs{T_t(f \chi_B)}^q\, dw}^{1/q} \lesssim \Upsilon\br{\frac{r_B}{\sqrt t}}^{\theta_2} \br{\dashint_B |f|^p\, dw}^{1/p},
\end{equation}
and for $j\geq 2$,
\begin{equation}\label{off:CtoB}
\br{\dashint_B \abs{T_t(f \chi_{C_j(B)})}^q\, dw}^{1/q} \lesssim 2^{j\theta_1} \Upsilon\br{\frac{2^j r_B}{\sqrt t}}^{\theta_2} 
e^{-\frac{c4^j r_B^2}{t}} \br{\dashint_{C_j(B)} |f|^p\, dw}^{1/p},
\end{equation}
and
\begin{equation}\label{off:BtoC}
\br{\dashint_{C_j(B)} \abs{T_t(f \chi_B)}^q\, dw}^{1/q} \lesssim 2^{j\theta_1} \Upsilon\br{\frac{2^j r_B}{\sqrt t}}^{\theta_2} e^{-\frac{c4^j r_B^2}{t}} \br{\dashint_B |f|^p\,dw}^{1/p},
\end{equation}
where $\Upsilon(s):=\max\{s,s^{-1}\}$, for $s>0$.
\end{definition}

Recently, the second named author of this paper, together with D. Cruz-Uribe and C. Rios, has obtained in \cite{CruzMartellRios} 
some new results about these types of estimates for the heat semigroup associated with $L_w$. Here we just state some properties that will be needed later.

\begin{lemma}[{\cite[Lemma 7.5]{CruzMartellRios}}]\label{off-diag}
Given $w\in A_{\infty}$ and a family of sublinear operators $\{T_t\}_{t>0}$ such that $T_t \in \mathcal O(L^p(w)-L^q(w))$, with $1\leq p<q\leq \infty$, there exist $\alpha, \beta>0$ such that for any $t>0$ and for any ball $B$ with radius $r_B$, 
\begin{equation}\label{off-AlphaBeta}
\br{\dashint_B \abs{T_t(f \chi_B)}^q\, dw}^{1/q} \lesssim \max\left\{\br{\frac{r_B}{\sqrt t}}^{\alpha}, \br{\frac{r_B}{\sqrt t}}^{\beta}\right\} \br{\dashint_B |f|^p\,dw}^{1/p}.
\end{equation}
\end{lemma}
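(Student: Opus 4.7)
The plan is to split the analysis according to whether $r_B\geq \sqrt t$ or $r_B<\sqrt t$. The content of the lemma is that, although the prefactor $\Upsilon(r_B/\sqrt t)^{\theta_2}$ in \eqref{off:BtoB} blows up as $(r_B/\sqrt t)^{-\theta_2}$ in the small-scale regime $r_B\ll \sqrt t$, one may upgrade this bound to a \emph{positive} power of $r_B/\sqrt t$, at the cost of a constant depending on a reverse-H\"older exponent of $w$.

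In the regime $r_B\geq \sqrt t$ I would simply invoke \eqref{off:BtoB}, since there $\Upsilon(r_B/\sqrt t)^{\theta_2}=(r_B/\sqrt t)^{\theta_2}$, giving the desired bound with $\beta:=\theta_2>0$. The substantive case is $r_B<\sqrt t$, in which I would enlarge $B$ to a concentric ball $B^*$ of radius $\sqrt t$, so that $B\subset B^*$ and $r_{B^*}/\sqrt t=1$. Applying \eqref{off:BtoB} to $B^*$ with input $f\chi_B$ (and using $(f\chi_B)\chi_{B^*}=f\chi_B$ together with $\Upsilon(1)^{\theta_2}=1$) would yield
\[
\left(\dashint_{B^*}|T_t(f\chi_B)|^q\,dw\right)^{1/q}\lesssim \left(\dashint_{B^*}|f\chi_B|^p\,dw\right)^{1/p}.
\]
Rewriting both averages with respect to $B$ generates a factor $(w(B)/w(B^*))^{1/p-1/q}$, whose exponent is strictly positive since $p<q$. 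Since $w\in A_\infty$, one may choose $s>1$ with $w\in RH_s$; then \eqref{pesosineqw:RHq} gives $w(B)/w(B^*)\lesssim (r_B/\sqrt t)^{n/s'}$, producing a prefactor $(r_B/\sqrt t)^\alpha$ with $\alpha:=(n/s')(1/p-1/q)>0$. Combining the two regimes, and if necessary relabeling so that the small-scale exponent is the smaller of the two, one obtains the claimed bound $\max\{(r_B/\sqrt t)^\alpha,(r_B/\sqrt t)^\beta\}$ with both $\alpha,\beta>0$.

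The main obstacle will be the small-scale regime: after transferring the estimate from $B^*$ back down to $B$ one inevitably loses the factor $(w(B)/w(B^*))^{1/p-1/q}$, and without quantitative control on this ratio one recovers only the trivial bound $1$. The key point is to extract a positive power of $r_B/\sqrt t$ from this ratio via reverse H\"older, and this is precisely where the $A_\infty$ assumption enters. I note in passing that sublinearity of $\{T_t\}_{t>0}$ is not actually used in the argument, and neither are the ``non-local'' bounds \eqref{off:CtoB}--\eqref{off:BtoC}; only the localized estimate \eqref{off:BtoB} together with a purely weight-theoretic inequality are needed.
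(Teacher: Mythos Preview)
Your argument is correct. The paper does not give its own proof of this lemma; it simply cites \cite[Lemma~7.5]{CruzMartellRios}, so there is no in-paper proof to compare against. Your two-regime argument is precisely the standard one: for $r_B\ge\sqrt t$ the estimate \eqref{off:BtoB} already has the right shape with $\beta=\theta_2$, and for $r_B<\sqrt t$ your enlargement to $B^*$ of radius $\sqrt t$ followed by the transfer of averages produces the factor $(w(B)/w(B^*))^{1/p-1/q}$, which reverse H\"older converts into the positive power $(r_B/\sqrt t)^{(n/s')(1/p-1/q)}$. Your observations that neither sublinearity nor the annular estimates \eqref{off:CtoB}--\eqref{off:BtoC} are needed are also accurate.
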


\begin{lemma}[{\cite[Proposition 3.1, Corollary 3.4, Proposition 7.1, and Section 8]{CruzMartellRios}}]\label{off-diag-sg}
Let $L_w$ be a degenerate elliptic operator with $w\in A_2$. 
\begin{list}{$(\theenumi)$}{\usecounter{enumi}\leftmargin=1cm \labelwidth=1cm\itemsep=0.2cm\topsep=.2cm \renewcommand{\theenumi}{\alph{enumi}}}

\item If $p_-(L_w)<p\leq q<p_+(L_w)$ \textup{(}cf. \eqref{p-}, \eqref{p+}\textup{)}, then $e^{-t L_w}$ and $(tL_w)^m e^{-t L_w}$, for every $m\in\N$, belong to $\mathcal O(L^p(w)-L^q(w))$. 

\item There exists an interval $\mathcal K(L_w)$ such that if $p,q \in \mathcal K(L_w)$, $p\leq q$, then $\sqrt t\nabla e^{-t L_w}\in \mathcal O(L^p(w)-L^q(w))$. Moreover, denoting by $q_-(L_w)$ and $q_+(L_w)$ the left and right endpoints of $\mathcal K(L_w)$, then $q_-(L_w)=p_-(L_w)$, $2<q_+(L_w)\leq p_+(L_w)$.

\end{list}

\end{lemma}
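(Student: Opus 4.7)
Since both claims are established in \cite{CruzMartellRios} at the specific locations indicated, the proof is essentially a matter of assembling results already in the literature. Nonetheless, were I reconstructing the argument from scratch, I would follow the template pioneered in \cite{Auscher} and the Auscher--Martell series \cite{AuscherMartell:I, AuscherMartell:II, AuscherMartell:III}, adapted to the degenerate setting by using $(\re^n, dw)$ as the underlying space of homogeneous type (recall $w \in A_2$) and exploiting the degenerate ellipticity $\Re A_w \xi \cdot \bar\xi \ge \lambda w |\xi|^2$ satisfied by $A_w = wA$.

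For part $(a)$, the $L^p(w) \to L^p(w)$ boundedness of $e^{-tL_w}$ on the open interval $(p_-(L_w), p_+(L_w))$ is immediate from the definitions \eqref{p-}, \eqref{p+}. The weighted Davies--Gaffney estimate at the $L^2(w)$ endpoint is obtained by the classical perturbation method: for $f, g \in L^2(w)$ with $\supp f \subset E$, $\supp g \subset F$, $d = \mathrm{dist}(E,F) > 0$, one writes $\langle e^{-tL_w} f, g\rangle_{L^2(w)} = \langle e^{\rho\phi} e^{-tL_w} f, e^{-\rho\phi} g\rangle_{L^2(w)}$ with $\phi$ a Lipschitz function separating $E$ and $F$, conjugates the operator, and uses weighted accretivity to obtain Gaussian decay $e^{-c d^2/t}$. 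Riesz--Thorin interpolation between this $L^2(w) \to L^2(w)$ estimate and the $L^p(w) \to L^p(w)$ bounds produces $L^p(w) \to L^p(w)$ off-diagonal estimates on the full interval $(p_-(L_w), p_+(L_w))$; composing $e^{-tL_w} = e^{-(t/2)L_w} \circ e^{-(t/2)L_w}$ and invoking weighted Sobolev/Nash-type inequalities on $(\re^n, dw)$ upgrades these to the $L^p(w) \to L^q(w)$ estimates on balls required by \eqref{off:BtoB}--\eqref{off:BtoC}. For the derivative operators $(tL_w)^m e^{-tL_w}$, sectorial analyticity of the semigroup and a Cauchy integral representation on a contour around $t$ of radius comparable to $t$ transfer the same exponential decay to the full family $\{(tL_w)^m e^{-tL_w}\}_{t > 0}$.

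For part $(b)$, the starting observation is the Caccioppoli inequality in the degenerate setting: if $u$ satisfies $\partial_t u + L_w u = 0$ weakly, then for nested balls $B_r \subset B_R$,
\[
\int_{B_r} |\nabla u|^2\, dw \lesssim \frac{1}{(R-r)^2}\int_{B_R} |u|^2\, dw + \int_{B_R}|\partial_t u|\,|u|\,dw,
\]
which follows from testing the equation against $u\,\eta^2$ with a smooth cut-off $\eta$ and invoking the weighted ellipticity. Applying this with $u = e^{-tL_w}(f\,\chi_E)$ and combining with the off-diagonal bounds for $e^{-tL_w}$ and $tL_w e^{-tL_w}$ from part $(a)$ yields $L^2(w) \to L^2(w)$ off-diagonal estimates for $\sqrt t\,\nabla e^{-tL_w}$. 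The interval $\mathcal K(L_w)$ then arises by interpolating these with the $L^p(w) \to L^p(w)$ bounds for $\sqrt t\,\nabla e^{-tL_w}$; the identification $q_-(L_w) = p_-(L_w)$ follows by a duality/perturbation argument, using that the dual problem controls the gradient on the same lower range. The main obstacle is the strict inequality $q_+(L_w) > 2$: this is a self-improvement of Meyers type, obtained by applying a weighted Gehring lemma to the reverse H\"older inequality satisfied by the gradient of weak solutions of $L_w u = 0$, producing $L^{2+\varepsilon}(w)$-integrability for some $\varepsilon > 0$ depending only on $\lambda, \Lambda$ and $[w]_{A_2}$. Feeding this back into the interpolation scheme of part $(a)$ delivers the $L^p(w) \to L^q(w)$ off-diagonal estimates for $p \le q$ in $\mathcal K(L_w)$.
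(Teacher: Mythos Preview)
The paper does not prove this lemma; it is stated with attribution to \cite{CruzMartellRios} and no argument is given. You correctly identify this and then go beyond the paper by sketching the underlying proof along the lines of \cite{Auscher, AuscherMartell:II}: Davies--Gaffney $L^2(w)$ estimates via conjugation, interpolation with the $L^p(w)$ bounds, transfer to $(tL_w)^m e^{-tL_w}$ by analyticity/Cauchy, and for the gradient a weighted Caccioppoli inequality plus a Meyers--Gehring self-improvement to push past $q=2$. This outline is the correct template and matches what \cite{CruzMartellRios} actually does, though some steps are compressed (for instance, the passage from $L^p(w)\!\to\! L^p(w)$ off-diagonal bounds to genuine $L^p(w)\!\to\! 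L^q(w)$ off-diagonal estimates on balls is more delicate than a single composition and requires the machinery of \cite[Section~3]{AuscherMartell:II}, and the identification $q_-(L_w)=p_-(L_w)$ uses the specific structure of the Hodge-type decomposition rather than abstract duality). As a sketch it is sound; for a self-contained proof you would need to flesh out those two points.
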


\subsection{Change of angles for weighted conical square functions}\label{subsectionweightv}

In this section we present a result that will allow us to change the aperture of the cone in different square functions. We indeed work in the setting of tent spaces and we put the emphasis on quantifying the bound that is obtained by the change of aperture. These change of angle formulas were first established for the Lebesgue measure in \cite{CoifmanMeyerStein} and with an optimal version in \cite{Auscherangles}. The weighted case was considered in \cite{MartellPrisuelos} (see also \cite{Lerner}). Here, as opposed to what was done in \cite{MartellPrisuelos}, the underlying measure is $dw$, as can be seen from  the conical square functions \eqref{square-H-1}--\eqref{square-P-3}.

To set the stage, we denote by $\R_+^{n+1}$ the upper-half space, that is, the set of points $(y,t)\in \R^n\times \R_+$. Given $\alpha>0$ and $x\in \mathbb{R}^n$ we define the cone of aperture  $\alpha$ with vertex at $x$ by
$$
\Gamma^{\alpha}(x):=\{(y,t)\in \R_+^{n+1} : |x-y|<\alpha t\}.
$$
For any closed set $E$ in $\mathbb{R}^n$, let $
\mathcal{R}^{\alpha}(E):=\bigcup_{x\in E}\Gamma^{\alpha}(x).$
We also define the operator $\mathcal{A}_w^{\alpha}$, $\alpha>0$, $w\in A_\infty$:
\begin{align}\label{AF}
\mathcal{A}_w^{\alpha}F(x):=\left(\iint_{\Gamma^{\alpha}(x)}|F(y,t)|^2 \ \frac{dw(y) \, dt}{tw(B(y,t))}\right)^{\frac{1}{2}}.
\end{align}
When $\alpha=1$ we simplify the above notation by writing $\Gamma(x)$, $\mathcal{R}(E)$, and $\mathcal{A}_w$.

In the following proposition we present the promised change of angle formulas which allow us to compare the $L^p(vdw)$-norms of the operators $\mathcal{A}_w^{\alpha}$ for different values of $\alpha$.
\begin{proposition}[Change of angles]\label{prop:alpha}
Let $0< \alpha\leq \beta<\infty$.
\begin{list}{$(\theenumi)$}{\usecounter{enumi}\leftmargin=1cm
\labelwidth=1cm\itemsep=0.2cm\topsep=.2cm
\renewcommand{\theenumi}{\roman{enumi}}}

\item  For every $w\in A_{\widetilde{r}}$ and  $v\in A_r(w)$, $1\leq r,\widetilde{r}<\infty$,
there holds
\begin{align}\label{change-alph-1}
\norm{\mathcal{A}_w^{\beta}F}_{L^p(vdw)}\
\leq 
C \br{\frac{\beta}{\alpha}}^{\frac{n\,\widetilde{r}\,r}{p}}
 \norm{\mathcal{A}^{\alpha}_w F}_{L^p(vdw)} \quad \textrm{for all} \quad 0<p\leq 2r,
\end{align}
 where   $C\ge 1$ depends on $n$, $p$, $r$, $\widetilde{r}$, $[w]_{A_{\widetilde{r}}}$, and  $[v]_{A_r(w)}$, but it is independent of  $\alpha$ and $\beta$.

\item  For every $w\in RH_{\widetilde{s}'}$ and $v\in RH_{s'}(w)$, $1\leq s,\widetilde{s}<\infty$, there holds
\begin{align}\label{change-alph-2}
\norm{\mathcal{A}^{\alpha}_w F}_{L^p(vdw)}
\leq  
C\br{\frac{\alpha}{\beta}}^{\frac{n}{s\,\widetilde{s}\,p}} \norm{\mathcal{A}^{\beta}_w F}_{L^p(vdw)}
\quad \text{for all} \quad \frac{2}{s}\leq p<\infty.
\end{align}
 where   $C\ge 1$ depends on $n$, $p$, $s$, $\widetilde{s}$, $[w]_{RH_{\widetilde{s}'}}$, and  $[v]_{RH_{s'}(w)}$, but it is independent of  $\alpha$ and $\beta$.

\end{list}
\end{proposition}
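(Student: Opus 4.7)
My plan extends the tent-space change-of-aperture methodology of Coifman--Meyer--Stein \cite{CoifmanMeyerStein} and Auscher \cite{Auscherangles}, adapted to the weighted case in \cite{MartellPrisuelos}, to the present setting where the underlying measure is $dw$ (with $w \in A_\infty$) tested further against a weight $v \in A_\infty(w)$. Parts (i) and (ii) are mirror images and I treat them in parallel, the first using the doubling of $w$ and $v\,dw$ and the second using the reverse-H\"older bounds.

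The starting point is the $L^2(vdw)$ endpoint, which follows by Fubini in the definition of $\mathcal{A}_w^\alpha F$: the identity
\[
\|\mathcal{A}_w^\alpha F\|_{L^2(vdw)}^2
=\iint_{\R^{n+1}_+}|F(y,t)|^2\,\frac{vw(B(y,\alpha t))}{t\,w(B(y,t))}\,dw(y)\,dt
\]
(and the analogous one for $\beta$) reduces the ratio of the squared $L^2(vdw)$-norms to the supremum of $vw(B(y,\beta t))/vw(B(y,\alpha t))$ over $(y,t)\in\R^{n+1}_+$. For part (i), doubling of $w\in A_{\widetilde r}$ gives $w(B(y,\beta t))/w(B(y,\alpha t))\le [w]_{A_{\widetilde r}}(\beta/\alpha)^{n\widetilde r}$, and the $A_r$-property of $v$ in the doubling space $(\R^n,dw)$ lifts this to $vw(B')/vw(B)\le [v]_{A_r(w)}(w(B')/w(B))^r$ for concentric $B\subset B'$; together they yield the $p=2$ case of \eqref{change-alph-1} with constant $([v]_{A_r(w)}[w]^r_{A_{\widetilde r}})^{1/2}(\beta/\alpha)^{n\widetilde rr/2}$, matching the exponent $n\widetilde rr/p$ at $p=2$. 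Part (ii) at $p=2$ is proven symmetrically using \eqref{pesosineqw:RHq} and \eqref{pesosineq:RHq}.

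For $p\neq 2$, I pass through a good-$\lambda$ distributional inequality
\[
\bigl\{\mathcal{A}_w^\beta F>C_0\lambda\bigr\}\subset\bigl\{\mathcal{M}^w(\chi_{\{\mathcal{A}_w^\alpha F>\lambda\}})>\gamma\bigr\},
\]
with $C_0>0$ and $\gamma\in(0,1)$ depending on $n$, $w$ and $\beta/\alpha$, where $\mathcal{M}^w$ is the weighted maximal function from \eqref{weightedHLM}. The inclusion is obtained by the classical tent-space argument: if $x$ is outside the right-hand set then $O_\lambda:=\{\mathcal{A}_w^\alpha F>\lambda\}$ has small $w$-density in every ball $B(x,R)$; consequently for each $(y,t)\in\Gamma^\beta(x)$ the ball $B(y,\alpha t)\subset B(x,(\alpha+\beta)t)$ contains, after invoking doubling, a $w$-dense set of good points $z$ with $\mathcal{A}_w^\alpha F(z)\le\lambda$, and using $(y,t)\in\Gamma^\alpha(z)$ for such $z$ one controls $(\mathcal{A}_w^\beta F(x))^2$ by $C_0^2\lambda^2$ after a dyadic decomposition of the $t$-variable. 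Combining this inclusion with the weak-type $(r,r)$ boundedness of $\mathcal{M}^w$ on $L^r(vdw)$ available from $v\in A_r(w)$ (Remark \ref{remark:weightedHLM}) and the layer-cake formula yields \eqref{change-alph-1} for all $0<p\le 2r$. Part (ii) is obtained by the mirror argument with the reverse-H\"older inequalities in place of doubling. The main obstacle is the quantitative bookkeeping in the good-$\lambda$ and dyadic steps: one must trade $w(B(y,\alpha t))$ against $w$-measures of balls centered at the good points $z$ scale-by-scale in $t$, and then optimize $\gamma$ and $C_0$ in the layer-cake integration to recover the sharp exponents $n\widetilde rr/p$ and $n/(s\widetilde sp)$ in \eqref{change-alph-1} and \eqref{change-alph-2} respectively.
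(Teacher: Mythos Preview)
Your $p=2$ endpoint is correct and matches the paper's argument. The gap is in the step away from $p=2$: the pointwise good-$\lambda$ inclusion you state,
\[
\{\mathcal{A}_w^\beta F>C_0\lambda\}\subset\{\mathcal{M}^w(\chi_{\{\mathcal{A}_w^\alpha F>\lambda\}})>\gamma\},
\]
is false for any choice of $C_0,\gamma$ depending only on $n$, $w$ and $\beta/\alpha$. A clean counterexample in $n=1$ with $w=v\equiv1$, $\alpha=1$, $\beta=10$: place unit point masses of $|F|^2\,\frac{dy\,dt}{2t^2}$ at $(y_k,t_k)=(5\cdot2^k,2^k)$ for $k=1,\dots,N$. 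Each mass lies in $\Gamma^{10}(0)$, so $\mathcal{A}^{10}F(0)=\sqrt{N}$; but the intervals $\{z:|z-5\cdot2^k|<2^k\}=(4\cdot2^k,6\cdot2^k)$ are pairwise disjoint, so $\mathcal{A}^{1}F(z)\le 1$ everywhere. For $\lambda=1$ the set $O_\lambda$ is empty while $\mathcal{A}^{10}F(0)=\sqrt{N}$ can be made arbitrarily large, so no fixed $C_0$ works. The heuristic you give (``$(y,t)\in\Gamma^\alpha(z)$ for a good $z$'') only says that $(y,t)$ contributes to $\mathcal{A}_w^\alpha F(z)^2$; it gives no pointwise control on $|F(y,t)|$ and hence none on $\mathcal{A}_w^\beta F(x)$ once many scales are superposed.

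The paper avoids this by not seeking a pointwise inclusion. After the $p=2$ case it \emph{extrapolates}: writing the $p=2$ estimate as $\|(\mathcal{A}_w^\beta F)^{2/r_0}\|_{L^{r_0}(v_0dw)}\lesssim\|(\beta/\alpha)^{n\widetilde r}(\mathcal{A}_w^\alpha F)^{2/r_0}\|_{L^{r_0}(v_0dw)}$ for all $v_0\in A_{r_0}(w)$, Rubio de Francia extrapolation over $A_r(w)$ (Theorem~\ref{theor:extrapol}) gives \eqref{change-alph-1} for every $1<r<\infty$ and $0<p\le 2r$ with the correct exponent $n\widetilde r r/p$. The remaining endpoint $r=1$, $0<p<2$, is handled by a genuine good-$\lambda$, but not a set inclusion: one combines the weak-type bound $vw(O_\lambda^*)\lesssim(\beta/\alpha)^{n\widetilde r}vw(O_\lambda)$ for the maximal enlargement of $O_\lambda$ with an $L^2$ estimate $\int_{E_\lambda^*}(\mathcal{A}_w^\beta F)^2\,v\,dw\lesssim(\beta/\alpha)^{n\widetilde r}\int_{E_\lambda}(\mathcal{A}_w^\alpha F)^2\,v\,dw$ on the complementary set of density points, the latter via Fubini and the inequality $vw(B(y,\beta t))\lesssim(\beta/\alpha)^{n\widetilde r}vw(E_\lambda\cap B(y,\alpha t))$ valid for $(y,t)\in\mathcal{R}^\beta(E_\lambda^*)$. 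Part (ii) is not a formal mirror: its $p=2$ case and extrapolation over $RH_{s'}(w)$ are symmetric, but the endpoint $s=1$, $p>2$ requires a separate Whitney-decomposition argument on $\{\mathcal{A}_w^{6\sqrt{n}\beta}F>\lambda\}$ rather than a reversed good-$\lambda$.
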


\begin{proof}
We start proving part ($i$). Fix $w\in A_{\widetilde{r}}$, $1\leq \widetilde{r}<\infty$. We first consider the case 
$p=2$ and $1\le r<\infty$, then we shall extrapolate to obtain \eqref{change-alph-1} for $1< r<\infty$ and $0<p\leq 2r$. Finally we prove the case $r=1$ and $0<p<2$. In all these cases we may assume that
$\|\mathcal{A}_w^{\alpha}F\|_{L^p(vdw)}<\infty$. Otherwise, there is nothing to prove.

For $p=2$ and $v\in A_{r_0}(w)$, $1\leq r_0<\infty$, applying \eqref{pesosineq:Ap} and  Fubini's theorem, we obtain
\begin{align}\label{Change-p=2:Ar}
\|\mathcal{A}^{\beta}_wF\|_{L^2(vdw)}&=\left(\int_{\mathbb{R}^n}\int_0^{\infty}\int_{|x-y|<\beta t}|F(y,t)|^2 \frac{dw(y) \, dt}{tw(B(y,t))}v(x)dw(x) \right)^{\frac{1}{2}}
\\
\nonumber & =
\left(\int_{\mathbb{R}^n}\int_0^{\infty}|F(y,t)|^2 vw(B(y,\beta t)) \frac{dw(y) \, dt}{tw(B(y,t))}\right)^{\frac{1}{2}}\nonumber
\\
\nonumber & \le C \left(\frac{\beta}{\alpha}\right)^{\frac{n\,r_0\,\widetilde{r}}{2}}
\left(\int_{\mathbb{R}^n}\int_0^{\infty}|F(y,t)|^2 vw(B(y,\alpha t)) \frac{dw(y) \, dt}{tw(B(y,t))}\right)^{\frac{1}{2}}
\\
\nonumber &=
C\left(\frac{\beta}{\alpha}\right)^{\frac{n\,r_0\,\widetilde{r}}{2}}\left(\int_{\mathbb{R}^n}\int_0^{\infty}\int_{|x-y|< \alpha t}|F(y,t)|^2 \frac{dw(y) \, dt}{tw(B(y,t))}v(x) dw(x) \right)^{\frac{1}{2}}
\\
\nonumber &=C\left(\frac{\beta}{\alpha}\right)^{\frac{n\,r_0\,\widetilde{r}}{2}} \norm{\mathcal{A}_w^{\alpha}F}_{L^2(vdw)},
\end{align}
 where $C$ is independent of $\alpha$ and $\beta$. 

Next we extrapolate from this inequality to the case $1<r<\infty$ and $0<p \le 2r$.
Take an arbitrary $1\leq r_0<\infty$. Then, \eqref{Change-p=2:Ar} implies, for all $v\in A_{r_0}(w)$,
$$
\int_{\mathbb{R}^n}\br{\br{\mathcal{A}_w^{\beta}F(x)}^{\frac{2}{r_0}}}^{r_0}v(x)dw(x) 
\lesssim 
\int_{\mathbb{R}^n}\br{\br{\frac{\beta}{\alpha}}^{n\,\widetilde{r}}\br{\mathcal{A}_w^\alpha F(x)}^{\frac{2}{r_0}}}^{r_0}v(x)dw(x).
 $$
Now, using  Theorem \ref{theor:extrapol}, part ($a$), we obtain that, for all $1<r<\infty$ and $v\in A_r(w)$,
$$
\int_{\mathbb{R}^n}\br{\mathcal{A}_w^{\beta}F(x)}^{\frac{2r}{r_0}}v(x)dw(x) 
\lesssim \left(\frac{\beta}{\alpha}\right)^{nr\,\widetilde{r}}
\int_{\mathbb{R}^n} \br{\mathcal{A}_w^{\alpha}F(x)}^{\frac{2r}{r_0}}v(x)dw(x).
$$
Since $1\leq r_0<\infty$ is arbitrary, we conclude \eqref{change-alph-1} for all $1<r<\infty$, $v\in A_{r}(w)$, and $0<p\leq 2r$.  Note that the implicit constant is independent of $\alpha$ and $\beta$.

\medskip
Finally we show the case $v\in A_{1}(w)$ and $0<p<2$. As in the proof of \cite[Section 3, Proposition 4]{CoifmanMeyerStein} and \cite[Proposition 3.2]{MartellPrisuelos}, we consider, for all $\lambda>0$, and for $0<\gamma<1$ to be chosen later,
$$
O_{\lambda}:=\{x\in \R^{n}:\mathcal{A}_w^{\alpha}F(x)>\lambda\},
\quad
E_{\lambda}:=\R^n\setminus O_{\lambda},
\ \
\textrm{and}
\ \ 
E_{\lambda}^*:=\bbr{x\in \R^n:\frac{|B(x,r)\cap E_{\lambda}|}{|B(x,r)|}\geq \gamma, \,\forall r>0}.
$$
Note that 
$$
O_{\lambda}^*:=\R^n\setminus E_{\lambda}^*=\{x\in \R^{n}:\mathcal{M}(\chi_{O_{\lambda}})(x)>1-\gamma\},
$$
where $\mathcal{M}$ is the Hardy-Littlewood maximal operator.  Clearly $O_{\lambda}^*$ is open and so is $O_{\lambda}$ (see for instance \cite[Proposition 3.2]{MartellPrisuelos}). Besides, $O_\lambda\subset O_{\lambda}^*$, and
\begin{align}\label{maximal-v}
vw(O_{\lambda}^*)\leq C\frac{1}{(1-\gamma)^{\widetilde{r}}}vw(O_{\lambda})<\infty.
\end{align}
Here the first inequality follows from the fact that $\mathcal{M}:L^{\widetilde{r}}(vdw)\rightarrow L^{\widetilde{r},\infty}(vdw)$, since 
$w\in A_{\widetilde{r}}$ and $v\in A_{1}(w)$ easily imply that $v w\in A_{\widetilde{r}}$. The last inequality is due to the assumption $\|\mathcal{A}_w^{\alpha}F\|_{L^p(vdw)}<\infty$.

Note now that, for all $(y,t)\in \mathcal{R}^{\beta}(E_{\lambda}^*)$, there exists $\bar{x}\in E_{\lambda}^*$ such that
$|\bar{x}-y|<\beta t$. We claim that
\begin{equation}\label{bound}
|B(y,\beta t)|(\gamma-c_{\beta,\alpha})=|B(\bar{x},\beta t)|(\gamma-c_{\beta,\alpha})\leq |E_{\lambda}\cap B(y,\alpha t)|, 
\end{equation}
where $c_{\beta,\alpha}:=1-\frac{\alpha^n}{2^n\beta^n}$. 
Indeed, for $z:=y-\frac{\alpha t}{2}\frac{y-\bar{x}}{|y-\bar{x}|}$, we have that $B(z,\alpha t/2)\subset B(\bar{x}, \beta t)\cap B(y,\alpha t)$. Then, since $\bar{x}\in E_{\lambda}^*$,
\begin{multline*}
\gamma|B(\bar{x},\beta t)|\leq |E_{\lambda}\cap B(\bar{x},\beta t)|
\leq |E_{\lambda}\cap B(y,\alpha t)|+|B(\bar{x},\beta t)\setminus  B(y,\alpha t)|
\\
\leq 
|E_{\lambda}\cap B(y,\alpha t)|+|B(\bar{x},\beta t)\setminus  B(z,\alpha t/2)|\leq |E_{\lambda}\cap B(y,\alpha t)|+|B(\bar{x},\beta t)|\left(1-\frac{\alpha^n}{2^n\beta^n}\right),
\end{multline*}
and this proves \eqref{bound}.

Next, recalling that $v w\in A_{\widetilde{r}}$,
\eqref{pesosineq:Ap} and \eqref{bound} then imply
$$
\frac{vw(E_{\lambda}\cap B(y, \alpha t))}{vw(B(y,\beta t))}
\geq C \left(\frac{|E_{\lambda}\cap B(y,\alpha t)|}{|B(y,\beta t)|}\right)^{\widetilde{r}}
\geq C(\gamma-c_{\beta,\alpha})^{\widetilde{r}}=C  \br{\frac{\alpha}{\beta}}^{n\,\widetilde{r}},
$$
after choosing $\gamma:=1-\frac{\alpha^n}{2^{n+1}\beta^n}$.
Hence,
\begin{align}\label{ineq}
\int_{E_{\lambda}^*} \mathcal{A}_w^{\beta}F(x)^2 v(x)dw(x) 
&= \int_{E_{\lambda}^*}\int_{0}^{\infty}\int_{\mathbb{R}^n} |F(y,t)|^2 \chi_{B(0,1)}\left(\frac{x-y}{\beta t}\right) \frac{dw(y) \, dt}{tw(B(y,t))} v(x)dw(x)
\\
\nonumber &\leq \iint_{\mathcal{R}^{\beta}(E_{\lambda}^*)} |F(y,t)|^2vw(B(y,\beta t))\frac{dw(y) \, dt}{tw(B(y,t))}
\\
\nonumber &\leq C \left(\frac{\beta}{\alpha}\right)^{n\,\widetilde{r}} \iint_{\mathcal{R}^{\beta}(E_{\lambda}^*)} |F(y,t)|^2vw(E_{\lambda}\cap B(y,\alpha t))\frac{dw(y) \, dt}{tw(B(y,t))}
\\
\nonumber &
= C\left(\frac{\beta}{\alpha}\right)^{n\,\widetilde{r}} \iint_{\mathcal{R}^{\beta}(E_{\lambda}^*)} |F(y,t)|^2 \int_{B(y,\alpha t)\cap E_{\lambda}} v(x)dw(x)\, \frac{dw(y) \, dt}{tw(B(y,t))}
\\
\nonumber &
\leq C \left(\frac{\beta}{\alpha}\right)^{n\,\widetilde{r}} \int_{E_{\lambda}} \mathcal{A}_w^{\alpha}F(x)^2 v(x)dw(x).
\end{align}
 Therefore, from our choice of $\gamma$, by \eqref{maximal-v} and \eqref{ineq}, and applying Chebychev's inequality, we have that
\begin{align*}
vw\br{\bbr{x\in \R^n: \mathcal{A}_w^{\beta}F(x)> \lambda}} &\leq vw\br{\bbr{x\in O_{\lambda}^*:\mathcal{A}_w^{\beta}F(x)> \lambda}}+vw\br{\bbr{x\in E_{\lambda}^*: \mathcal{A}_w^{\beta}F(x)> \lambda}}
\\
&
\leq vw\br{O_{\lambda}^*} + \frac{1}{\lambda^2}\int_{E_{\lambda}^*}\mathcal{A}_w^{\beta}F(x)^{2} v(x) dw(x) 
\\
&
\lesssim \br{\frac{\beta}{\alpha}}^{n\,\widetilde{r}}  vw(O_{\lambda}) + \br{\frac{\beta}{\alpha}}^{n\,\widetilde{r}}  \frac{1}{\lambda^2}\int_{E_{\lambda}}\mathcal{A}_w^{\alpha}F(x)^2 v(x) dw(x)
.
\end{align*}

Using the above estimate, it follows that for $0<p<2$,
\begin{align*}
&\|\mathcal{A}_w^{\beta}F\|_{L^p(vdw)}^p
=
\int_{0}^{\infty}p\,\lambda^{p} \,vw\br{\bbr{x\in \R^n: \mathcal{A}_w^{\beta}F(x)> \lambda}}\, \frac{d\lambda}{\lambda}
\\
&
\quad\lesssim
\br{\frac{\beta}{\alpha}}^{n\,\widetilde{r}} \br{\int_{0}^{\infty}p\,\lambda^{p} \, vw\br{O_{\lambda}}\, \frac{d\lambda}{\lambda}
+
\int_{0}^{\infty}p\lambda^{p-2}\int_{E_{\lambda}} \mathcal{A}_w^{\alpha}F(x)^2v(x) dw(x)\, \frac{d\lambda}{\lambda}}
\\
&
\quad\lesssim
\br{\frac{\beta}{\alpha}}^{n\,\widetilde{r}} 
\br{
\norm{\mathcal{A}_w^{\alpha}F}_{L^p(vdw)}^p
+
\int_{\mathbb{R}^n} \mathcal{A}_w^{\alpha}F(x)^2\int_{\mathcal{A}_w^{\alpha}F(x)}^{\infty}p\lambda^{p-2} \, \frac{d\lambda}{\lambda} v(x) dw(x)}\\
&
\quad=
C\,\br{\frac{\beta}{\alpha}}^{n\,\widetilde{r}} \norm{\mathcal{A}_w^{\alpha}F}_{L^p(vdw)}^p,
\end{align*}
 where $C$ is independent of $\alpha$ and $\beta$.  This completes 
 the proof of $(i)$. 

\medskip
We next prove part ($ii$). Fix $w\in RH_{\tilde{s}'}$, $1\leq \tilde{s}<\infty$. As in the proof of part $(i)$, we split the proof into three steps.  We first prove \eqref{change-alph-2} for $p=2$ and $1\leq s<\infty$, then by extrapolation we will show it for
$2/s\leq p<\infty$ and $1<s<\infty$, and finally we will deal with the case $s=1$ and $2<p<\infty$.

We start by taking $p=2$ and $v\in RH_{s_0'}(w)$ with $1\leq s_0<\infty$. Proceeding as in \eqref{Change-p=2:Ar} but using \eqref{pesosineq:RHq}  instead of \eqref{pesosineq:Ap}, we obtain
\begin{multline}\label{Change-p=2:RHs}
\|\mathcal{A}_w^{\alpha}F\|_{L^2(vdw)}
=
\left(\int_0^{\infty}\int_{\mathbb{R}^n}|F(y,t)|^2 vw(B(y,\alpha t))  \frac{dw(y) \, dt}{tw(B(y,t))}\right)^{\frac{1}{2}}
\\
\lesssim 
\br{\frac{\alpha}{\beta}}^{\frac{n}{2s_0\widetilde{s}}}
\left(\int_0^{\infty}\int_{\mathbb{R}^n}|F(y,t)|^2 vw(B(y,\beta t)) \frac{dw(y) \, dt}{tw(B(y,t))}\right)^{\frac{1}{2}}
=
\br{\frac{\alpha}{\beta}}^{\frac{n}{2s_0\,\widetilde{s}}} \norm{\mathcal{A}_w^{\beta} F}_{L^2(vdw)},
\end{multline}
 where the implicit constant is independent of $\alpha$ and $\beta$.  
Let us extrapolate from this inequality. Take an arbitrary $1\le s_0<\infty$ and notice that \eqref{Change-p=2:RHs} immediately yields that, for every $v\in RH_{s_0'}(w)$,
$$
\int_{\R^n} (\mathcal{A}_w^{\alpha}F(x)^{2\,s_0})^{\frac{1}{s_0}}\,v(x)dw(x)
\lesssim
\int_{\R^n} \left(\left(\frac{\alpha}{\beta}\right)^{\frac{n}{\widetilde{s}}}\mathcal{A}_w^\beta F(x)^{2\,s_0}\right)^{\frac{1}{s_0}}\,v(x)dw(x).
$$
Next, we apply Theorem \ref{theor:extrapol}, part ($b$), to conclude that, for every $1<s<\infty$ and for every $v\in RH_{s'}(w)$,
$$ 
\int_{\R^n} \mathcal{A}_w^{\alpha}F(x)^{\frac{2\,s_0}{s}}\,v(x)dw(x)
\le
C\,\left(\frac{\alpha}{\beta}\right)^{\frac{n}{s\,\widetilde{s}}}\,\int_{\R^n} \mathcal{A}_w^{\beta}  F(x)^{\frac{2\,s_0}{s}}\,v(x)dw(x),
$$
where $C$ does not depend on $\alpha$ or $\beta$. From this, using that $1\le s_0<\infty$ is arbitrary we conclude \eqref{change-alph-2} for all $1<s<\infty$ and  $2/s\leq p<\infty$.

Finally, we show \eqref{change-alph-2} for all $2<p<\infty$ and $v\in RH_{\infty}(w)$ (i.e., $s=1$).
Without loss of generality, we may assume that $\frac{\beta}{\alpha}>32$ (for $1\le \frac{\beta}{\alpha}\le 32$ we just use the fact that $\mathcal{A}_w^{\alpha}F\le \mathcal{A}_w^\beta F$). Let us also assume that $\|\mathcal{A}_w^\beta F\|_{L^p(vdw)}<\infty$ (otherwise there is nothing to prove). Besides, since $v\in RH_{\infty}(w)$ there exists $r>1$  such that $r\ge p/2$ and $v\in A_{r}(w)$. Then we can apply part $(i)$  and obtain that
\begin{equation}\label{A-A}
\|\mathcal{A}_w^{6\sqrt{n}\beta}F\|_{L^p(vdw)}
\le
C
\left(\frac{6\sqrt{n}\beta}{\beta}\right)^{\frac{n\,r\,\widetilde{r}}{p}}
\|\mathcal{A}_w^{\beta}F\|_{L^p(vdw)}
=
C\|\mathcal{A}_w^{\beta}F\|_{L^p(vdw)}<\infty,
\end{equation}
where $C$ does not depend on $\beta$.

After these observations, for every $\lambda>0$, consider the set
$$
O_{\lambda}:=\bbr{x\in \mathbb{R}^n:\mathcal{A}_w^{6\sqrt{n}\beta}F(x)>\lambda}.
$$
We claim that
\begin{align}\label{AF-Awe}
vw\br{\bbr{x\in \mathbb{R}^n : \mathcal{A}_w^{\alpha}F(x)>2\lambda}}
\lesssim
\frac{1}{\lambda^2}\br{\frac{\alpha}{\beta}}^{\frac n{\widetilde{s}}} \int_{O_{\lambda}}\abs{\mathcal{A}_w^{6\sqrt{n}\beta}F(x)}^2v(x)dw(x).
\end{align}

Assuming \eqref{AF-Awe} momentarily and applying \eqref{A-A}, we obtain \eqref{change-alph-2} for $2<p<\infty$. Indeed,
\begin{align*}
\|\mathcal{A}_w^{\alpha}F\|_{L^p(vdw)}^p
&=
2^p\int_{0}^{\infty}p\,\lambda^{p} \, vw\left(\left\{x\in \R^n: \mathcal{A}_w^{\alpha}F(x)> 2\lambda\right\}\right)\, \frac{d\lambda}{\lambda}
\\
&\lesssim
\left(\frac{\alpha}{\beta}\right)^{\frac{n}{\widetilde{s}}}\int_{0}^{\infty}\lambda^{p-2}
\int_{O_{\lambda}}\mathcal{A}_w^{6\sqrt{n}\beta}F(x)^2v(x)dw(x)\frac{d\lambda}{\lambda}
\\
&\lesssim \left(\frac{\alpha}{\beta}\right)^{\frac{n}{\widetilde{s}}}\int_{\mathbb{R}^n}\mathcal{A}_w^{6\sqrt{n}\beta}F(x)^2
\int_{0}^{\mathcal{A}_w^{6\sqrt{n}\beta}F(x)}\lambda^{p-2}
\frac{d\lambda}{\lambda}v(x)dw(x)
\\
&\lesssim \left(\frac{\alpha}{\beta}\right)^{\frac{n}{\widetilde{s}}} \norm{\mathcal{A}_w^{6\sqrt{n}\beta}F}_{L^p(vdw)}^p
\\
&
\lesssim \left(\frac{\alpha}{\beta}\right)^{\frac{n}{\widetilde{s}}} \norm{\mathcal{A}_w^{\beta}F}_{L^p(vdw)}^p,
\end{align*}
 where the implicit constants are independent of $\alpha$ and $\beta$.

It remains to show \eqref{AF-Awe}. We may  assume that  $O_\lambda\neq\emptyset$, otherwise both sides in \eqref{AF-Awe} would vanish since $\mathcal{A}_w^{\alpha}F\leq \mathcal{A}_w^{6\sqrt{n}\beta}F$. Using similar arguments as in the proof of \cite[Proposition 3.2, part ($i$)]{MartellPrisuelos}, we clearly have that $O_\lambda$ is open. Also, \eqref{A-A} and Chebychev's inequality give us $vw(O_\lambda)<\infty$, which in turn yields that $O_\lambda\subsetneq \R^n$ (since $vw$ is a doubling measure and hence $vw(\R^n)=\infty$).  Then, we can take a Whitney decomposition of $O_{\lambda}$ (see for example \cite[Chapter VI]{St70}): there exists a family of  closed cubes  $\{Q_j\}_{j\in \N}$ with disjoint interiors so that
\begin{equation}\label{Whitney}
O_{\lambda}
=
\bigcup_{j\in \N}Q_j,  \qquad \textrm{diam}(Q_j)\leq d(Q_j,\R^n\setminus O_{\lambda})\leq 4 \textrm{diam}(Q_j),
\quad\textrm{and}\quad
\sum_{j\in\N} \chi_{Q_j^*}\le 12^n\,\chi_{O_\lambda},
\end{equation}
where $Q_j^*:=\frac98Q_j$  and $d(Q_j,\R^n\setminus O_{\lambda})$ denotes the Euclidean distance between the sets $Q_j$ and $\R^n\setminus O_{\lambda}$.

On the other hand, since $\mathcal{A}_w^{\alpha}F\leq \mathcal{A}_w^{6\sqrt{n}\beta}F$, we have that
\begin{align}\label{rhwhi}
vw\br{\bbr{x\in \mathbb{R}^n : \mathcal{A}_w^{\alpha}F(x)>2\lambda}}
&= vw\br{\bbr{x\in O_\lambda: \mathcal{A}_w^{\alpha}F(x)>2\lambda}}
\\ \nonumber &=\sum_{j\in \N}
vw\br{\bbr{x\in Q_j : \mathcal{A}_w^{\alpha}F(x)>2\lambda}}.
\end{align}
Fix $j\in\N$ and, for every $x\in Q_j$, write
\begin{align*}
\mathcal{A}_w^{\alpha}F(x)
&\le
\left(\int_{\frac{\ell(Q_j)}{\beta}}^{\infty}\int_{B(x,\alpha t)}|F(y,t)|^2\frac{dw(y) \, dt}{tw(B(y,t))}\right)^{\frac{1}{2}}
+
\left(\int_{0}^{\frac{\ell(Q_j)}{\beta}}\int_{B(x,\alpha t)}|F(y,t)|^2\frac{dw(y) \, dt}{tw(B(y,t))}\right)^{\frac{1}{2}}
\\ &=: G_j(x)+H_j(x).
\end{align*}
Pick $x_j\in \mathbb{R}^n\setminus O_{\lambda}$ such that $d(x_j,Q_j)\leq 4 \textrm{diam}(Q_j)$. Notice that for every $x\in Q_{j}$ and $t\ge \ell(Q_j)/\beta$ we have that
$B(x,\alpha t)\subset B(x_j,6\sqrt{n}\beta t)$.
Then,
\begin{align*}
G_j(x)^2
&=\int_{\frac{\ell(Q_j)}{\beta}}^{\infty}\int_{B(x,\alpha t)}|F(y,t)|^2\frac{dw(y) \, dt}{tw(B(y,t))}
\leq
\int_{\frac{\ell(Q_j)}{\beta}}^{\infty}\int_{B(x_j,6\sqrt{n}\beta t)}|F(y,t)|^2\frac{dw(y) \, dt}{tw(B(y,t))}
\\ &\leq \mathcal{A}_w^{6\sqrt{n}\beta}F(x_j)^2\leq \lambda^2,
\end{align*}
where the last inequality holds since $x_j\in \mathbb{R}^n\setminus O_{\lambda}$. Using this, Chebychev's inequality, and \eqref{pesosineq:RHq} for $w \in RH_{\widetilde{s}}$ and $v\in RH_{\infty}(w)$, we have
\begin{align*}
&vw\br{\bbr{x\in Q_j : \mathcal{A}_w^{\alpha}F(x)>2\lambda}}
\le
vw\br{\bbr{x\in Q_j : H_j(x)>\lambda}}
\\
&\qquad\qquad
\leq
\frac{1}{\lambda^2}\int_{Q_j} H_j(x)^2v(x)dw(x)
\\
&\qquad\qquad\leq
\frac{1}{\lambda^2} \iint_{\mathcal{R}^{\alpha}(Q_j)}\chi_{(0,\,\beta^{-1}\ell(Q_j))}(t)|F(y,t)|^2 vw(B(y,\alpha t))\frac{dw(y) \, dt}{tw(B(y,t))}
\\
&\qquad\qquad\lesssim
\frac{(\alpha/\beta)^{\frac{n}{\widetilde{s}}}}{\lambda^2}\iint_{\mathcal{R}^{\alpha}(Q_j)}\chi_{(0,\beta^{-1}\ell(Q_j))}(t)|F(y,t)|^2 vw(B(y,32^{-1}\beta t))\frac{dw(y) \, dt}{tw(B(y,t))}
\\
&\qquad\qquad\leq
\frac{(\alpha/\beta)^{\frac{n}{\widetilde{s}}}}{\lambda^2}\int_{Q_j^*}\int_0^{\infty}\int_{B(x,32^{-1}\beta t)}|F(y,t)|^2 \frac{dw(y) \, dt}{tw(B(y,t))}v(x)dw(x)
\\
&\qquad\qquad\leq
\frac{(\alpha/\beta)^{\frac{n}{\widetilde{s}}}}{\lambda^2}\int_{Q_j^*}\mathcal{A}_w^{6\sqrt{n}\beta}F(x)^2v(x)dw(x).
\end{align*}
Then, by  \eqref{rhwhi} and the bounded overlap of the family $\{Q_j^*\}_{j\in\N}$, 
\begin{align*}
vw\br{\bbr{x\in \mathbb{R}^n : \mathcal{A}_w^{\alpha}F(x)>2\lambda}}
&\lesssim
\frac{1}{\lambda^2} \br{\frac{\alpha}{\beta}}^{\frac{n}{\widetilde{s}}}\sum_{j\in \N}\int_{Q_j^*} \abs{\mathcal{A}_w^{6\sqrt{n}\beta}F(x)}^2v(x)dw(x)
\\
&\lesssim
\frac{1}{\lambda^2} \br{\frac{\alpha}{\beta}}^{\frac{n}{\widetilde{s}}} \int_{O_{\lambda}} \abs{\mathcal{A}_w^{6\sqrt{n}\beta}F(x)}^2 v(x)dw(x),
\end{align*}
 where the implicit constants are independent of $\alpha$ and $\beta$.   This completes the proof of \eqref{AF-Awe}.
\end{proof}

\bigskip

\subsection{Carleson measure condition}
Given $0<p<\infty$, we now introduce a new maximal operator (see \cite{MartellPrisuelos} for the case $w\equiv 1$)
\begin{align}\label{CF}
\mathcal{C}_{w,p}F(x_0)=\sup_{B\ni x_0}\left(\frac{1}{w(B)}\int_{B}\left(\int_{0}^{r_B}\int_{B(x,t)}|F(y,t)|^2 \frac{dw(y) \, dt}{tw(B(y,t))}\right)^{\frac{p}{2}} \, dw(x) \right)^{\frac{1}{p}},
\end{align}
where the supremum is taken over all balls $B\subset \mathbb{R}^n$ containing $x_0$, and $r_B$ denotes the radius of $B$. 
We also consider  
$$
\mathcal{C}_w F(x_0)=\sup_{B\ni x_0}\left(\frac{1}{w(B)}\int_0^{r_B}\int_{B}|F(y,t)|^2 \frac{dw(y) \, dt}{t} \right)^{\frac{1}{2}},
$$
which is a weighted version of the one introduced in \cite{CoifmanMeyerStein} to study duality in tent spaces.

We first observe that for $p=2$, 
\begin{align}\label{Cw-Cw,2:equiv}
\mathcal{C}_w F \approx \mathcal{C}_{w,2}F.
\end{align}
Indeed, by \eqref{pesosineqw:Ap} and  Fubini's theorem, 
\begin{align*}
\mathcal{C}_{w,2}F(x_0)
&=
\sup_{B\ni x_0}\left(\frac{1}{w(B)}\int_{B}\int_{0}^{r_B}\int_{B(x,t)}|F(y,t)|^2 \frac{dw(y) \, dt}{tw(B(y,t))} dw(x)\right)^{\frac{1}{2}}
\\
&\leq \sup_{B\ni x_0}\left(\frac{1}{w(B)}\int_{2B}\int_{0}^{r_B}|F(y,t)|^2\int_{B(y,t)}  dw(x) \frac{dw(y) \, dt}{tw(B(y,t))} \right)^{\frac{1}{2}}
\\
&\lesssim \sup_{B\ni x_0}\left(\frac{1}{w(2B)}\int_0^{2r_B}\int_{2B}|F(y,t)|^2 \frac{dw(y) \, dt}{t} \right)^{\frac{1}{2}}=\mathcal{C}_w F(x_0).
\end{align*}
As for the reverse inequality, there holds
\begin{align*}
\mathcal{C}_w F(x_0)
&=\sup_{B\ni x_0}\left(\frac{1}{w(B)}\int_0^{r_B}\int_{B}|F(y,t)|^2 \frac{dw(y) \, dt}{t} \right)^{\frac{1}{2}}
\\ &= 
\sup_{B\ni x_0}\left(\frac{1}{w(B)}\int_{0}^{r_B}\int_{B}|F(y,t)|^2\int_{B(y,t)} dw(x) \frac{dw(y) \, dt}{tw(B(y,t))} \right)^{\frac{1}{2}}
\\ &\lesssim 
\sup_{B\ni x_0}\left(\frac{1}{w(2B)}\int_{2B}\int_{0}^{2r_B}\int_{B(x,t)}|F(y,t)|^2\frac{dw(y) \, dt}{tw(B(y,t))} dw(x) \right)^{\frac{1}{2}}
=
\mathcal{C}_{w,2}F(x_0). 
\end{align*}

\medskip

The following proposition relates the norm of $\mathcal{C}_{w,p_0}f$ with that of $\mathcal{A}_w f$. This  will be crucial in the proof of Theorem \ref{thm:SF-Heat}.  When $w\equiv 1$ this was proved in \cite[Proposition 3.34]{MartellPrisuelos} for a general $p_0$ (see also 
\cite[Theorem 3]{CoifmanMeyerStein} for the case $p_0=2$ and $w,v\equiv 1$). 
\begin{proposition}\label{prop:maximal}
Let $w\in A_{\infty}$.
\begin{list}{$(\theenumi)$}{\usecounter{enumi}\leftmargin=1cm \labelwidth=1cm\itemsep=0.2cm\topsep=.2cm \renewcommand{\theenumi}{\alph{enumi}}}
\item If\, $0<p_0,p<\infty$, $v \in A_{\infty}(w)$, and $F\in L^2_{\rm loc}(\R_{+}^{n+1},dwdt)$ then
\begin{align*}
\|\mathcal{A}_wF\|_{L^p(vdw)}\lesssim \|\mathcal{C}_{w,p_0}F\|_{L^p(vdw)}.
\end{align*}

\item If\, $0< p_0< p<\infty$ and $v\in A_{\frac{p}{p_0}}(w)$ then
\begin{align*}
\|\mathcal{C}_{w,p_0}F\|_{L^p(vdw)}\lesssim \|\mathcal{A}_wF\|_{L^p(vdw)}.
\end{align*}
\end{list}
\end{proposition}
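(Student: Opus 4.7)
My plan splits along the two parts of the proposition.

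For part $(b)$, the argument is direct. Since the truncated cone $\Gamma(x)\cap\{t<r_B\}$ is contained in $\Gamma(x)$, the inner integral in the definition of $\mathcal{C}_{w,p_0}F$ is pointwise dominated by $\mathcal{A}_wF(x)^2$, so averaging over $B\ni x_0$ against $dw$ and taking the supremum over such balls yields the pointwise bound
$$\mathcal{C}_{w,p_0}F(x_0)\le \bigl(\mathcal{M}^w\bigl((\mathcal{A}_wF)^{p_0}\bigr)(x_0)\bigr)^{1/p_0}.$$
Since $p/p_0>1$ and $v\in A_{p/p_0}(w)$, Remark~\ref{remark:weightedHLM} gives the boundedness of $\mathcal{M}^w$ on $L^{p/p_0}(v\,dw)$, so $\|\mathcal{C}_{w,p_0}F\|_{L^p(vdw)}^{p_0}\le \|\mathcal{M}^w((\mathcal{A}_wF)^{p_0})\|_{L^{p/p_0}(vdw)}\lesssim \|\mathcal{A}_wF\|_{L^p(vdw)}^{p_0}$.

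Part $(a)$ is the harder direction, for which I would argue via a good-$\lambda$/Whitney decomposition scheme, adapting to the ambient measure $dw$ the unweighted argument of \cite[Proposition 3.34]{MartellPrisuelos} (itself modelled on \cite{CoifmanMeyerStein}). Since $w\in A_\infty$ is doubling, $(\mathbb{R}^n,dw,|\cdot|)$ is a space of homogeneous type, so the standard covering and stopping-time tools remain available. Assuming, as we may, that $\|\mathcal{C}_{w,p_0}F\|_{L^p(vdw)}<\infty$, for each $\lambda>0$ I would introduce
$$O_\lambda=\{\mathcal{C}_{w,p_0}F>\lambda\},\qquad O_\lambda^*=\{\mathcal{M}^w(\chi_{O_\lambda})>\gamma\},$$
with $\gamma\in(0,1)$ small, to be fixed. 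Lower semi-continuity of $\mathcal{C}_{w,p_0}F$ makes $O_\lambda$ open; moreover, since $v\in A_\infty(w)$ there exists $r<\infty$ with $v\in A_r(w)$, and the weak-$(r,r)$ bound for $\mathcal{M}^w$ on $L^r(v\,dw)$ then yields $vw(O_\lambda^*)\le C\,vw(O_\lambda)<\infty$.

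The heart of the proof is the local estimate
\begin{equation*}
\int_{\mathbb{R}^n\setminus O_\lambda^*}\mathcal{A}_wF(x)^{p_0}\,v\,dw(x)\ \le\ C\,\lambda^{p_0}\,vw(O_\lambda^*).\qquad(\star)
\end{equation*}
I would prove $(\star)$ by Whitney decomposing $O_\lambda^*$ into balls $\{B_j\}_j$ with bounded overlap and $r_{B_j}\simeq \mathrm{dist}(B_j,\mathbb{R}^n\setminus O_\lambda^*)$, and applying Fubini to the left-hand side; for each $(y,t)$ with $B(y,t)\cap(\mathbb{R}^n\setminus O_\lambda^*)\neq\emptyset$, one picks a point $z\in B(y,t)\cap E_\lambda$ (where $E_\lambda=\{\mathcal{C}_{w,p_0}F\le\lambda\}$) and applies the Carleson condition $\mathcal{C}_{w,p_0}F(z)\le\lambda$ to the ball $B(z,2t)\supset B(y,t)$. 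Combining this with doubling of $w$, the $A_\infty(w)$ class of $v$ (which allows one to switch between $w$- and $vw$-measures inside doubling balls), bounded overlap of the Whitney family, and, when $p_0\ne 2$, a H\"older step inside each ball, one sums the local contributions to obtain $(\star)$. Granted $(\star)$, Chebyshev together with the layer-cake identity gives
\begin{align*}
\|\mathcal{A}_wF\|_{L^p(vdw)}^p
&=2^p p\int_0^\infty\lambda^{p-1}\,vw(\{\mathcal{A}_wF>2\lambda\})\,d\lambda\\
&\le 2^p p\int_0^\infty\lambda^{p-1}\bigl(vw(O_\lambda^*)+C\,2^{-p_0}vw(O_\lambda^*)\bigr)d\lambda\\
&\lesssim\int_0^\infty\lambda^{p-1}vw(O_\lambda)\,d\lambda\ =\ \tfrac{1}{p}\|\mathcal{C}_{w,p_0}F\|_{L^p(vdw)}^p.
\end{align*}

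The main technical obstacle is the proof of $(\star)$: the Fubini/Whitney/Carleson chain has to be executed carefully in the doubling space $(\mathbb{R}^n,dw)$, tracking how the $A_\infty(w)$ hypothesis on $v$ allows comparison of $vw$-measures inside doubling balls, and adapting to the case $p_0\ne 2$. Once the geometric core of \cite[Proposition 3.34]{MartellPrisuelos} has been transferred to the present weighted setting, the remaining layer-cake manipulation is routine.
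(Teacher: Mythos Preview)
Your argument for part $(b)$ is correct and matches the paper's proof exactly.

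For part $(a)$, however, there is a genuine gap: the key estimate $(\star)$ is false as stated. Take any $F$ with $\mathcal{C}_{w,p_0}F$ globally bounded (e.g., $F$ compactly supported in $\R^{n+1}_+$) and $\mathcal{A}_wF\not\equiv 0$. For $\lambda>\|\mathcal{C}_{w,p_0}F\|_{L^\infty}$ the set $O_\lambda=\{\mathcal{C}_{w,p_0}F>\lambda\}$ is empty, hence so is $O_\lambda^*$, and $(\star)$ would force $\int_{\R^n}\mathcal{A}_wF^{p_0}\,v\,dw=0$, a contradiction. The sketch you give for $(\star)$ also does not hang together: you Whitney-decompose $O_\lambda^*$ in order to estimate an integral over its \emph{complement}, and the Carleson condition $\mathcal{C}_{w,p_0}F(z)\le\lambda$ only controls $w$-\emph{averages} of truncated square functions over balls, not pointwise values of $\mathcal{A}_wF$ at a single $x$.

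The paper's argument is structurally quite different. One works with level sets of the \emph{wide-aperture} operator, setting $O_\lambda=\{\mathcal{A}_w^{\alpha}F>\lambda\}$ for some fixed $\alpha\ge 11$, and proves the good-$\lambda$ inequality
\[
vw\big(\{\mathcal{A}_wF>2\lambda,\ \mathcal{C}_{w,p_0}F\le\gamma\lambda\}\big)\ \le\ c\,\gamma^{c_{w,v}}\,vw\big(\{\mathcal{A}_w^{\alpha}F>\lambda\}\big).
\]
This is obtained by Whitney-decomposing $O_\lambda$ and, on each cube $Q_j$, splitting $F=F\chi_{\{t\ge r_{B_j}\}}+F\chi_{\{t<r_{B_j}\}}$: the large-$t$ piece satisfies $\mathcal{A}_w(F\chi_{\{t\ge r_{B_j}\}})(x)\le\mathcal{A}_w^{\alpha}F(x_j)\le\lambda$ for a nearby $x_j\notin O_\lambda$, while the small-$t$ piece is handled by Chebyshev and the Carleson bound at a point $\bar x_j\in Q_j$ where $\mathcal{C}_{w,p_0}F(\bar x_j)\le\gamma\lambda$, first in $w$-measure and then transferred to $vw$-measure via $v\in A_\infty(w)$. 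Integrating the good-$\lambda$ in $\lambda$ yields
\[
\|\mathcal{A}_wF\|_{L^p(vdw)}^p\ \le\ C\,\gamma^{c_{w,v}}\,\|\mathcal{A}_w^{\alpha}F\|_{L^p(vdw)}^p+C_\gamma\,\|\mathcal{C}_{w,p_0}F\|_{L^p(vdw)}^p,
\]
and the first term is absorbed using the change-of-angles Proposition~\ref{prop:alpha} together with an a~priori reduction to $F$ with compact support in $\R^{n+1}_+$ (so that $\|\mathcal{A}_wF\|_{L^p(vdw)}<\infty$), followed by monotone convergence. Both the wide-aperture auxiliary operator and the a~priori finiteness step are essential and are missing from your outline.
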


\begin{proof}

We start proving part ($a$). Fix $w\in A_\infty$ and let $1\leq r<\infty$ be so that $w\in A_r$.   The proof is divided into two steps. 

{\em Step $1$}: We first consider a function $F\in L^2(\R_{+}^{n+1},dwdt)$ such that, for some $N>0$, 
$\supp F\subset K_N:=\{(y,t)\in \R^{n+1}_+:y\in B(0,N),N^{-1}<t<N\}$.
Notice that for $y\in B(0,N)$ and $t\ge N^{-1}$, 
$$
w(B(0,N)) \le w(B(y,2N)) \le w(B(y,2N^2t)) \le [w]_{A_r} (2N^2)^{nr} w(B(y,t)).
$$
Then,
\begin{align}\label{normfinite}
\|\mathcal{A}_{w}F\|_{L^p(vdw)}&=
\br{\int_{B(0,2N)}\br{\int_{N^{-1}}^{N}\int_{B(x,t)\cap B(0,N)}|F(y,t)|^2\frac{dw(y)\,dt}{tw(B(y,t))}}^{\frac{p}{2}}v(x)dw(x)}^{\frac{1}{p}}
\\ \nonumber
&\lesssim N^{\frac{1}{2}}
\br{ [w]_{A_r} (2N^2)^{nr}}^{1/2} w(B(0,N))^{-\frac12} vw(B(0,2N))^{\frac1p}\norm{F}_{L^2(\R_{+}^{n+1},\,dwdt)}
<\infty.
\end{align}

We claim that it is enough to prove that there exist $\alpha>1$ and $c,\,c_{w,v}>0$ such that for all $0<\gamma< 1$ and $0<\lambda<\infty$, we have
\begin{align}\label{good-lambda:A-Cp:w}
vw\br{\bbr{x\in \mathbb{R}^n: \mathcal{A}_wF(x)>2\lambda,\mathcal{C}_{w,p_0}F(x)\leq \gamma\lambda}}
\leq 
c\gamma^{c_{w,v}}vw\br{\bbr{x\in \mathbb{R}^n: \mathcal{A}_w^{\alpha}F(x)>\lambda}}.
\end{align}
Assuming this momentarily, it follows that
\begin{align*}
&
vw\br{\bbr{x\in \mathbb{R}^n: \mathcal{A}_wF(x)>2\lambda}}
\\
&
\qquad \leq vw\br{\bbr{x\in \mathbb{R}^n: \mathcal{A}_wF(x)>2\lambda,\mathcal{C}_{w,p_0}F(x)\leq \gamma\lambda}}
+ vw\br{\bbr{x\in \mathbb{R}^n: \mathcal{C}_{w,p_0}F(x)> \gamma\lambda}}
\\
&
\qquad\leq c\gamma^{c_{w,v}}vw\br{\bbr{x\in \mathbb{R}^n: \mathcal{A}_w^{\alpha}F(x)>\lambda}} + vw\br{\bbr{x\in \mathbb{R}^n: \mathcal{C}_{w,p_0}F(x)> \gamma\lambda}}.
\end{align*}
This easily gives
\begin{align*}
\norm{\mathcal{A}_wF}_{L^p(vdw)}^p \leq C\gamma^{c_{w,v}}\norm{\mathcal{A}_w^{\alpha}F}_{L^p(vdw)}+C_{\gamma,p} \norm{\mathcal{C}_{w,p_0}F}_{L^p(vdw)}^p.
\end{align*}
Note that, from Proposition \ref{prop:alpha} we know that $\|\mathcal{A}_w^{\alpha}F\|_{L^p(vdw)}\leq c(\alpha,p)\|\mathcal{A}_wF\|_{L^p(vdw)}$. Then, choosing $\gamma$ small enough so that $C\gamma^{c_{w,v}}c(\alpha,p)<1$ and from the fact that $\norm{\mathcal{A}_wF}_{L^p(vdw)}<\infty$, we conclude that
\begin{align*}
\norm{\mathcal{A}_wF}_{L^p(vdw)}\lesssim \norm{\mathcal{C}_{w,p_0}F}_{L^p(vdw)}.
\end{align*}

Therefore, in order to complete the proof it just remains to show  \eqref{good-lambda:A-Cp:w}. We argue as in \cite{CoifmanMeyerStein}. Consider $O_\lambda:=\{x\in \mathbb{R}^n:\mathcal{A}_w^{\alpha}F(x)>\lambda\}$, and note that \eqref{change-alph-1} and \eqref{normfinite} yield that $vw(O_\lambda)<\infty$, for all $\lambda>0$ and as before $O_\lambda \subsetneq \R^n$. Without loss of generality we can also suppose that $O_\lambda\neq\emptyset$ (otherwise both terms in \eqref{good-lambda:A-Cp:w} vanish, since $\mathcal{A}_w^{\alpha}F\geq \mathcal{A}_wF$ for $\alpha>1$, and then the  claim  is trivial). Note finally that $O_\lambda$ is open (see for instance \cite[Proposition 3.2]{MartellPrisuelos}). We can then take a Whitney decomposition  of $O_\lambda$ (cf. \cite[Chapter VI]{St70}): there exists a family of  closed cubes  $\{Q_j\}_{j\in \N}$ with disjoint interiors satisfying \eqref{Whitney}.
In particular, for each $j\in\N$ we can pick $x_j\in \R^n\setminus O_\lambda$ such that $d(x_j,Q_j)\leq 4 \textrm{diam}(Q_j)$. Furthermore, note again that $\mathcal{A}_w^{\alpha}F\geq \mathcal{A}_wF$ for $\alpha>1$. Then
\begin{multline*}
vw\br{\bbr{x\in \R^n: \mathcal{A}_wF(x)>2\lambda,\mathcal{C}_{w,p_0}F(x)\leq \gamma\lambda}}
\\
=
vw\br{\bbr{x\in O_\lambda: \mathcal{A}_wF(x)>2\lambda,\mathcal{C}_{w,p_0}F(x)\leq \gamma\lambda}}
\\
=
\sum_{j\in\N} vw\br{\bbr{x\in Q_j: \mathcal{A}_wF(x)>2\lambda,\mathcal{C}_{w,p_0}F(x)\leq \gamma\lambda}}.
\end{multline*}
Thus, to show \eqref{good-lambda:A-Cp:w}, it is enough to prove, for each $j\in \N$,
\begin{align*}
vw\br{\bbr{x\in Q_j: \mathcal{A}_wF(x)>2\lambda, \mathcal{C}_{w,p_0} F(x)\leq \gamma\lambda}}
\leq c\gamma^{c_{w,v}}vw(Q_j).
\end{align*}
Finally note that since $v \in A_{\infty}(w)$, (cf. \eqref{pesosineq:RHq}), the above inequality follows at once if we show, for each $j\in \N$,
\begin{align}\label{good-lambda:A-Cp:dx}
w\br{\bbr{x\in Q_j: \mathcal{A}_wF(x)>2\lambda,\mathcal{C}_{w,p_0}F(x)\leq \gamma\lambda}}
\leq c\gamma^{p_0}w(Q_j).
\end{align}

Then, let us fix $j\in \N$ and obtain \eqref{good-lambda:A-Cp:dx}. There is nothing to prove if the set on its left-hand side is empty. Thus, we assume that there exists $\bar{x}_j\in \bbr{x\in Q_j: \mathcal{A}_wF(x)>2\lambda,\mathcal{C}_{w,p_0}F(x)\leq \gamma\lambda}$. Let $B_j$ be the ball such that $Q_j\subset \overline{B_j}$ with radius $r_{B_j}=\textrm{diam}(Q_j)/2$. Then,
$d(x_j,Q_j)\leq 8r_{B_j}$ and $Q_j\subset \overline{B(x_j, 10r_{B_j})}$.

We now write
\begin{align*}
F(x,t)
=
F_{1,j}(x,t)+F_{2,j}(x,t)
:=
F(x,t)\,\chi_{[r_{B_j},\infty)}(t)+
F(x,t)\,\chi_{(0,r_{B_j})}(t).
\end{align*}
Then, $\mathcal{A}_wF(x)\leq \mathcal{A}_wF_{1,j}(x)+\mathcal{A}_wF_{2,j}(x)$. Now, for $t>r_{B_j}$, $x\in Q_j$ , and $\alpha\ge 11$, we have that $B(x,t)\subset B(x_j,\alpha t)$. Hence,
\begin{multline}\label{AF1}
\mathcal{A}_wF_{1,j}(x)^2
=
\int_{r_{B_j}}^{\infty}\int_{|x-y|<t}|F(y,t)|^2 \frac{dw(y) \, dt}{tw(B(y,t))}
\\
\leq
\int_{0}^{\infty}\int_{|x_j-y|<\alpha t}|F(y,t)|^2 \frac{dw(y) \, dt}{tw(B(y,t))}
=
\mathcal{A}_w^{\alpha}F(x_j)^2 \leq \lambda^2,
\end{multline}
where the last inequality holds since $x_j\in \R^n\setminus O_\lambda$.
On the other hand, by our choice of $\bar{x}_j\in Q_j\subset \overline{B_j}\subset 2B_j$,  it follows that
\begin{multline}\label{AF2} 
\frac{1}{w({2B_j} )}\int_{{2B_j}}|\mathcal{A}_wF_{2,j}(x)|^{p_0} \, dw(x)
\\
=
\frac{1}{w( 2B_j )}\int_{{2B_j}}\left(\int_0^{{2} r_{B_j}}\int_{B(x,t)} |F(y,t)|^2 \frac{dw(y) \, dt}{tw(B(y,t))}\right)^{\frac{p_0}{2}}dw(x)
\le
\mathcal{C}_{w,p_0}F(\bar{x}_j)^{p_0}
\leq
(\gamma\lambda)^{p_0}.
\end{multline}
Using \eqref{AF1}, Chebychev's  inequality, and \eqref{AF2} we conclude \eqref{good-lambda:A-Cp:dx}:
\begin{multline*}
w(\{x\in Q_j: \mathcal{A}_wF(x)>2\lambda, \mathcal{C}_{w,p_0}F(x)
\leq 
\gamma \lambda\})\leq w(\{x\in Q_j: \mathcal{A}_wF_{2,j}(x)>\lambda\})
\\
\le
\frac1{\lambda^{p_0}}\int_{Q_j}|\mathcal{A}_wF_{2,j}(x)|^{p_0} \, dw(x)
\leq \gamma^{p_0}w( 2B_j)
\leq c\gamma^{p_0}w(Q_j).
\end{multline*}
This completes the proof of Step $1$.

\medskip

{\em Step $2$}: Take $F\in L^2_{\rm loc}(\R_{+}^{n+1},dwdt)$ and define, for every $N>1$, $F_N:=F\chi_{K_N}$. Then, since $F_N\in L^2(\R_{+}^{n+1},dwdt)$ and $\supp F_N\subset K_N$, we can apply Step $1$ and obtain that
$$
\|\mathcal{A}_wF_N\|_{L^p(vdw)}\lesssim \|\mathcal{C}_{w,p_0}F_N\|_{L^p(vdw)}\leq \|\mathcal{C}_{w,p_0}F\|_{L^p(vdw)},
$$
where the implicit constant is uniform in $N$.
Finally since $|F_N|\nearrow |F|$ in $\R^{n+1}_+$ (that is, $|F_N|$ is an increasing sequence which converges  to $|F|$), the monotone convergence theorem yields the desired estimate. This finishes the proof of $(a)$.

\medskip

We next turn to the proof of $(b)$.
For every $x_0\in \mathbb{R}^n$ and any ball $B\subset \mathbb{R}^n$ such that $x_0\in B$, we have
\begin{align*}
\left(\dashint_B\left(\int_0^{r_B}\int_{B(x,t)}|F(y,t)|^2\frac{dw(y) \, dt}{tw(B(y,t))}\right)^{\frac{p_0}{2}}dw(x)\right)^{\frac{1}{p_0}}
\leq \left(\dashint_B|\mathcal{A}_wF(x)|^{p_0} dw(x)\right)^{\frac{1}{p_0}}
\leq
\mathcal{M}_{p_0}^w(\mathcal{A}_wF)(x_0),
\end{align*}
where for any function $h$, $\mathcal{M}_{p_0}^w h(x):= \mathcal{M}^w\big(|h|^{p_0}\big)(x)^{\frac1{p_0}}$ and $\M^w$ is defined in \eqref{weightedHLM}. Taking the supremum over all balls containing $x_0$, we conclude that $\mathcal{C}_{w,p_0}F(x_0)\leq \mathcal{M}_{p_0}^w(\mathcal{A}_wF)(x_0)$.
Moreover, since $p>p_0$ and $v\in A_{\frac{p}{p_0}}(w)$, it follows from Remark \ref{remark:weightedHLM} that $\mathcal{M}_{p_0}^w$ is bounded on $L^{p}(vdw)$. Thus we conclude that
$$
\|\mathcal{C}_{w,p_0}F\|_{L^{p}(vdw)}\leq \|\mathcal{M}_{p_0}^w(\mathcal{A}_wF)\|_{L^{p}(vdw)}\lesssim  \|\mathcal{A}_wF\|_{L^{p}(vdw)}.
$$
This completes the proof.
\end{proof}

\bigskip

\section{Proofs of the main results}\label{section:proofs}

In this section we prove Theorems \ref{thm:SF-Heat}--\ref{theor:control-SF-Poisson}. To this aim we first establish in Section \ref{firstproof} the boundedness of  $\Gcal_{\hh}^{L_w}$ and $\Scal_{\hh}^{L_w}$ on $L^p(w)$. In Section \ref{section:proof:Heat} we study the boundedness of $\Gcal_{\hh}^{L_w}$  on $L^p(vdw)$  with the help of the operator $\mathcal{C}_{w,p}$ introduced in \eqref{CF}. This and Theorem \ref{theor:control-SF-Heat} easily yield the desired estimates in Theorem \ref{thm:SF-Heat}.  In Section \ref{section:proof:SP-bounded} we see that Theorem \ref{thm:SF-Poisson} follows at once from Theorems  \ref{thm:SF-Heat} and \ref{theor:control-SF-Poisson}. Finally the proofs of Theorems \ref{theor:control-SF-Heat} and \ref{theor:control-SF-Poisson} are given in  Sections \ref{section:proof:control:Heat} and \ref{section:proof:control:Poisson},  respectively. We note that we improve some of the results in \cite{BCKYY,YangZ} by considering wider families of  conical square functions, allowing estimates on $L^p(vdw)$ (in place of on $L^p(w)$) and also enlarging considerably the ranges of the estimates. For instance,  \cite{BCKYY,YangZ} establish that $\Scal_{\hh}^{L_w}$  is bounded on $L^p(w)$ for $p\in(\frac{2n}{n+1},\frac{2n}{n-1})$. Here we obtain that it is bounded on $L^p(vdw)$ for all $p_-(L_w)<p<\infty$ and every  $v\in A_{\frac{p}{p_-(L_w)}}(w)$. Note that, in particular, we can take $v\equiv 1$ and in that case we get boundedness in the range $(p_-(L_w),\infty)$ which is clearly bigger as $p_-(L_w)< \frac{2n}{n+1}<\frac{2n}{n-1}<\infty$. We finally observe that it was shown in \cite{AuscherHofmannMartell} that the ranges for the boundedness of some conical square functions associated with the heat-semigroup are sharp, hence our ranges in that case are also sharp.

In what follows, unless otherwise specified, $L_w$ is a degenerate elliptic operator as in \eqref{degenerateL} with fixed $w\in A_2$.  In the context of Theorems \ref{thm:SF-Heat} and \ref{thm:SF-Poisson}, the considered conical square functions are sublinear operators a priori defined in $L^2(w)$. When we say that any of them is  bounded on $L^p(vdw)$ we mean that it satisfies estimates on $L^p(vdw)$ for any function in $L^\infty_c(\re^n)$  (or in $L^2(w)\cap L^p(vdw)$) where $L^\infty_c(\re^n)$ stands for the space of essentially bounded functions with compact support. It is standard to see that since $L^\infty_c(\re^n)$ in dense in $L^p(vdw)$, one can uniquely extend the conical square function to a bounded operator on $L^p(vdw)$. We will skip this standard argument below.


\subsection{Boundedness of $\Gcal_{\hh}^{L_w}$ and $\Scal_{\hh}^{L_w}$ on $L^p(w)$}\label{firstproof}


We recall that $\Gcal_{\hh}^{L_w}=\Gcal_{0,\hh}^{L_w}$ (cf. \eqref{square-H-3}) and $\Scal_{\hh}^{L_w} =\Scal_{1,\hh}^{L_w}$ (cf. \eqref{square-H-1}).

\begin{proposition}\label{thm:boundednessGH}
The conical square functions $\Gcal_{\hh}^{L_w}$ and $\Scal_{\hh}^{L_w}$ are bounded on $L^p(w)$ for all $p_-(L_w)<p<\infty$.
\end{proposition}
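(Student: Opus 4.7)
The plan is to reduce everything to $\Gcal_{\hh}^{L_w}$: Theorem~\ref{theor:control-SF-Heat}(a) gives the pointwise bound $\mathcal{S}_{\hh}^{L_w}f\le \tfrac12\mathcal{G}_{\hh}^{L_w}f$, so only $\Gcal_{\hh}^{L_w}$ has to be controlled on $L^p(w)$ for $p_-(L_w)<p<\infty$. I would split into three ranges: $p=2$, $p\in(p_-(L_w),2)$, and $p\in(2,\infty)$.

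For $p=2$, I would invoke the bounded $H^\infty$ functional calculus of the maximal-accretive operator $L_w$ on $L^2(w)$, which yields the quadratic estimates
\[
\int_0^\infty \bigl\|\varphi(t^2L_w)f\bigr\|_{L^2(w)}^2\,\frac{dt}{t}\lesssim \|f\|_{L^2(w)}^2
\]
for $\varphi(z)=ze^{-z}$ and $\varphi(z)=\sqrt{z}\,e^{-z}$. Integrating $|\Gcal_{\hh}^{L_w}f|^2$ in $x$ and applying Fubini (so that $\int_{B(y,t)}dw(x)=w(B(y,t))$ cancels with the denominator in the cone integral) reduces matters to the above quadratic estimates, once the spatial-gradient term in $\Gcal_{\hh}^{L_w}$ is replaced by $tL_w^{1/2}e^{-t^2L_w}f$ by means of the weighted Kato equivalence $\|\nabla h\|_{L^2(w)}\approx\|L_w^{1/2}h\|_{L^2(w)}$ of \cite{Cruz-UribeRios}, applied to $h=e^{-t^2L_w}f$.

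For $p\in(p_-(L_w),2)$, I would run a Calder\'on--Zygmund decomposition of $f$ at level $\lambda$ adapted to the doubling measure $dw$, writing $f=g+\sum_i b_i$ with $\|g\|_{L^\infty(w)}\lesssim\lambda$, $\int b_i\,dw=0$, and $b_i$ supported on Whitney balls $B_i$. The good part is handled by Step~$1$ and Chebyshev. For the bad part, I would fix $p_0\in(p_-(L_w),p)\cap\mathcal{K}(L_w)$ and use the $L^{p_0}(w)$--$L^2(w)$ off-diagonal estimates from Lemma~\ref{off-diag-sg} for $e^{-t^2L_w}$, $t^2L_w\,e^{-t^2L_w}$, and $t\nabla e^{-t^2L_w}$; the Gaussian decay, integrated over the cones and summed over dyadic annuli around the $B_i$, produces the desired weak-type $(p,p)$ bound, and Marcinkiewicz interpolation with Step~$1$ yields the strong $L^p(w)$ estimate.

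For $p\in(2,\infty)$, I would appeal to Proposition~\ref{prop:maximal}(a), which reduces the problem to controlling $\mathcal{C}_{w,p_0}\bigl(t\nabla_{y,t}e^{-t^2L_w}f\bigr)$ in $L^p(w)$ for some $p_0\in(p_-(L_w),2)\cap\mathcal{K}(L_w)$. Given a ball $B$ of radius $r_B$, I would split $f=f\chi_{4B}+\sum_{j\ge2}f\chi_{C_j(B)}$: the local piece is bounded via Step~$1$, while for the remote pieces I exploit the semigroup property to write $t\nabla_{y,t}e^{-t^2L_w}=\Theta_t\circ e^{-t^2L_w/2}$ for suitable $\Theta_t$ with $L^2(w)$--$L^2(w)$ off-diagonal decay, combined with the $L^{p_0}(w)$--$L^2(w)$ off-diagonal estimates of Lemma~\ref{off-diag-sg} for $e^{-t^2L_w/2}$. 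Restricting $t\in(0,r_B)$ and summing in $j$ then produces a pointwise control by $\mathcal{M}^w(|f|^{p_0})(x)^{1/p_0}$, which is bounded on $L^p(w)$ since $p>p_0$. The main obstacle is this last step: one must check that the Gaussian factors from the off-diagonal estimates, after being integrated against the cones and summed in the Whitney annuli, produce a bound uniform in $B$ with no upper restriction on $p$. This is precisely where the conical geometry is essential, and it is what allows the range to extend all the way up to $p=\infty$, in contrast with the vertical square function setting where the range is typically bounded above by $p_+(L_w)$.
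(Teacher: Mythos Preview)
Your three-case structure and your handling of $p=2$ and $p>2$ are essentially the paper's proof: for $p=2$ the paper simply quotes the $L^2(w)$-boundedness of the vertical square function $\mathrm{g}_{\hh}^{L_w}$ from \cite{CruzMartellRios} (which is exactly your quadratic-estimate/Kato argument), and for $p>2$ the paper uses Proposition~\ref{prop:maximal}(a) with $p_0=2$ and controls the Carleson functional by $\mathcal{M}_2^w f$ via a local/global split with $L^2(w)$--$L^2(w)$ off-diagonal estimates, essentially your argument with $p_0=2$.

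The genuine gap is in the range $p\in(p_-(L_w),2)$. After the Calder\'on--Zygmund decomposition, you propose to control the bad part by applying off-diagonal estimates for $t\nabla_{y,t}e^{-t^2L_w}$ directly to each $b_i$ and integrating over the cone. This fails: for $y\in C_j(B_i)$ the off-diagonal bound gives a factor $2^{j\theta_1}\Upsilon(2^jr_i/t)^{\theta_2}e^{-c4^jr_i^2/t^2}$, and once $t\gtrsim 2^jr_i$ the Gaussian is useless while $\Upsilon(2^jr_i/t)^{\theta_2}=(t/2^jr_i)^{\theta_2}$ grows, so $\int_0^\infty(\cdots)^2\,\frac{dt}{t}$ diverges. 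The mean-zero of $b_i$ does not help either, since the heat kernel of $L_w$ has no pointwise H\"older regularity for rough complex $A$. What is missing is the standard semigroup regularization: one writes $b_i=A_{r_i}b_i+(I-e^{-r_i^2L_w})^Mb_i$ with $A_{r_i}=I-(I-e^{-r_i^2L_w})^M$. The piece $\sum_iA_{r_i}b_i$ is pushed into $L^2(w)$ and handled by the $p=2$ case, while for $(I-e^{-r_i^2L_w})^Mb_i$ a functional-calculus computation shows that $t\nabla_{y,t}e^{-t^2L_w}(I-e^{-r_i^2L_w})^M$ gains an extra factor essentially of size $(r_i/t)^{2M}$ for $t\gg r_i$, making the $t$-integral converge and producing summable decay $2^{-j(2M-\theta_1)}$ over the annuli for $M$ large. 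This is precisely the mechanism the paper implements in its Case~3.
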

\begin{proof}
Note first that it is trivial to see that $\mathcal{S}_{\hh}^{L_w}f\leq\frac{1}{2} \mathcal{G}^{L_w}_{\hh}f$, hence it is enough to establish the $L^p(w)$ boundedness for $\Gcal_{\hh}^{L_w}$.  We split the proof into three cases: $p=2$, $2<p<\infty$, and $p_-(L_w)<p<2$.

{\em Case $1$: $p=2$}. 
Recall that the vertical square function
$$
\mathrm{g}_{\hh}^{L_w}f(y):=
\Bigg(\int_0^{\infty}\left|t\nabla_{y,t}e^{-t^2L_w}f(y)\right|^2\frac{dt}{t}\Bigg)^{\frac{1}{2}}
$$
is bounded on $L^2(w)$ (see \cite{CruzMartellRios}). Then, applying  Fubini's theorem, it follows that,  for every $f\in L^2(w)$, 
\begin{multline*}
\|\Gcal_{\hh}^{L_w}f\|_{L^2(w)}=
\Bigg(\int_{\R^n}\int_0^{\infty}\int_{B(x,t)}\left|t\nabla_{y,t}e^{-t^2L_w}f(y)\right|^2\frac{dw(y)\,dt}{tw(B(y,t))}dw(x)\Bigg)^{\frac{1}{2}}
\\
=\Bigg(\int_{\R^n}\int_0^{\infty}\left|t\nabla_{y,t}e^{-t^2L_w}f(y)\right|^2\int_{B(y,t)}dw(x)\frac{dw(y)\,dt}{tw(B(y,t))}\Bigg)^{\frac{1}{2}}
=\norm{\mathrm{g}_{\hh}^{L_w}f}_{L^2(w)}
\lesssim
\|f\|_{L^2(w)}.
\end{multline*}

\medskip
{\em Case $2$: $p>2$.} Fix $f\in  L^\infty_c(\R^n)$ and let $F(y,t):=t\nabla_{y,t}e^{-t^2L_w}f(y)$ so that
$\mathcal{A}_wF=\Gcal_{\hh}^{L_w}f$ (recall the definition of $\mathcal{A}_w$ in \eqref{AF}). Moreover, for every $x_0\in \R^n$, denote
$$
\widetilde{\mathcal{C}}_{w}f(x_0):=\mathcal{C}_{w}F(x_0)=\sup_{B\ni x_0}\Bigg(\frac{1}{w(B)}\int_0^{r_B}\int_B|t\nabla_{y,t}e^{-t^2L_w}f(y)|^2\frac{dw(y)\,dt}{t}\Bigg)^{\frac{1}{2}}.
$$
Then, Proposition \ref{prop:maximal}, part ($a$),  for $v\equiv 1$, and \eqref{Cw-Cw,2:equiv} imply that
$$
\norm{\Gcal_{\hh}^{L_w} f}_{L^p(w)} = \norm{\mathcal{A}_wF}_{L^p(w)} \lesssim \norm{\C_{w,2}F}_{L^p(w)}
\approx \norm{{\C}_{w} F}_{L^p(w)} =
\Vert\widetilde{\mathcal{C}}_{w}f\Vert_{L^p(w)}.
$$
Consequently, it suffices to prove that 
\begin{align}\label{C-M:w}
\|\widetilde{\mathcal{C}}_{w}f\|_{L^p(w)}\lesssim \|\mathcal{M}_{2}^wf\|_{L^p(w)},
\end{align}
where $\mathcal{M}_2^wf:=(\mathcal{M}^w|f|^2)^{\frac{1}{2}}$, since $\mathcal{M}_{2}^w$ is bounded on $L^p(w)$ for $p>2$ (see Remark \ref{remark:weightedHLM}).

In order to prove \eqref{C-M:w}, take $B$ a ball in $\R^n$ with radius $r_B$ and split $f$ into its local and its global part:
$f=f_{\rm loc}+f_{\rm glob}:=f\chi_{4B}+f\chi_{\R^{n}\setminus 4B}$.
Then, applying the boundedness of $g_{\hh}^{L_w}$ on $L^2(w)$, we obtain
\begin{multline}\label{localp>2}
\br{\frac{1}{w(B)}\int_{B}\int_0^{r_B}|t\nabla_{y,t}e^{-t^2L_w}f_{\rm loc}(y)|^2\frac{dt\,dw(y)}{t}}^{1/2}
\leq 
\br{\frac{1}{w(B)} \int_{\R^n} |g_{\hh}^{L_w} f_{\rm loc}(y)|^2 dw(y)}^{1/2}
\\
\lesssim
\br{\frac{1}{w(B)}\int_{\R^n}|f_{\rm loc}(y)|^2dw(y)}^{1/2}
\lesssim
\br{\dashint_{4B}|f(y)|^2dw(y)}^{1/2}
\lesssim
\inf_{x\in B} \mathcal M_2^wf(x).
\end{multline}
As for the global part, since by Lemma \ref{off-diag-sg} we have that 
$\sqrt{t}\nabla_{y}e^{-tL_w},\ tL_we^{-tL_w} \in\mathcal{O}(L^2(w)-L^2(w))$, then
\begin{align}\label{globalp>2}
\Bigg(\frac{1}{w(B)}&\int_{B}\int_0^{r_B}|t\nabla_{y,t}e^{-t^2L_w}f_{\rm glob}(y)|^2\frac{dt\,dw(y)}{t}\Bigg)^{1/2}
\\
\nonumber
&\lesssim 
\sum_{j\geq 2} \br{\int_0^{r_B}\dashint_{B}|t\nabla_{y,t}e^{-t^2L_w}(f\chi_{C_j(B)})(y)|^2\,dw(y)\frac{dt}{t}}^{1/2}
\\
\nonumber
&\lesssim 
\sum_{j\geq 2}2^{j\theta_1} \br{\int_0^{r_B}\Upsilon\br{\frac{2^j r_B}{t}}^{2\theta_2}e^{-\frac{c4^jr_B^2}{t^2}}
\dashint_{2^{j+1}B}|f(y)|^2dw(y)\frac{dt}{t}}^{1/2}
\\
\nonumber
&\lesssim 
\sum_{j\geq 2}2^{j\theta_1} \br{\int_0^{r_B}\br{\frac{2^j r_B}{t}}^{2\theta_2}e^{-\frac{c4^jr_B^2}{t^2}}\frac{dt}{t}}^{1/2}
\inf_{x\in B} \mathcal M_2^wf(x)
\\
\nonumber
&\lesssim \inf_{x\in B} \mathcal M_2^wf(x).
\end{align}
Hence by \eqref{localp>2} and \eqref{globalp>2}, we conclude that, for every $x_0\in \R^n$ and every ball $B\subset \R^n$ such that $x_0\in B$,
\begin{align*}
\br{\frac{1}{w(B)}\int_0^{r_B}\int_B|t\nabla_{y,t}e^{-t^2L_w}f(y)|^2\frac{dw(y)\,dt}{t}}^{1/2}
\lesssim 
\mathcal M_2^wf(x_0).
\end{align*}
Now taking the supremum over all balls $B\ni x_0$, we have that
$\widetilde{C}_wf(x_0)\lesssim \mathcal M_2^wf(x_0)$ for all $x_{0}\in \R^n$ and
hence \eqref{C-M:w} follows.
\medskip

{\em Case $3$: $p_-(L_w)<p<2$.} 
Since $\Gcal_{\hh}^{L_w}$ is bounded on $L^q(w)$ for $2\le q<\infty$, by  the  Marcinkiewicz interpolation theorem, it is enough to prove that $\Gcal_{\hh}^{L_w}$ maps continuously $L^{p}(w)$ into $L^{p,\infty}(w)$ for all $p_-(L_w)<p<2$. That is, we wish to show that for all $\lambda>0$ and $f\in  L^\infty_c(\re^n)$, 
\begin{equation}\label{weak p}
w\br{\bbr{x\in \R^n: \Gcal_{\hh}^{L_w}f(x)>\lambda}} \lesssim \frac{1}{\lambda^p} \int_{\R^n}|f(x)|^pdw(x).
\end{equation}
To this end, fix $p_-(L_w)<p<2$, $f\in  L^\infty_c(\re^n)$, and $\lambda>0$.  Here we need to adapt the argument in \cite[p. 5480]{AuscherHofmannMartell} and proceed as in \cite[p. 61]{Auscher} (notice that, as it was already observed in the latter, \cite[Theorem 1.1]{Auscher} does not apply due to the nature of the conical square function). For starters we need  a 
Calder{\'o}n-Zygmund decomposition for $f$ adapted to the $L^p(w)$ norm. This is quite standard, but we need some extra features, hence we sketch the argument. First, using the notation in \eqref{weightedHLM}  consider the level set $E_\lambda=\{x\in\re^n: \mathcal{M}^w(|f|^p)(x)^{1/p}>\lambda\}$. Since this is clearly open, we can dyadically divide the standard Whitney cubes, as constructed in \cite[Chapter
VI]{St70}, to find a pairwise disjoint family of dyadic cubes $\{Q_i\}_{i\in\mathbb{N}}$ such that $E_\lambda=\cup_{i\in\mathbb{N}} Q_i$; the family $\{4B_i\}_{i\in\mathbb{N}}$ has bounded overlap, where $B_i=B(x_{Q_i}, \sqrt{n}\ell(Q_i))$; and $B_i^*\cap (\re^n\setminus E_\lambda)\neq\emptyset$ where $B_i^*=C_n B_i$ with $C_n>8$ a large enough constant depending only $n$. We can then write $f=g+\sum_{i=1}^{\infty}b_i$ with 
\begin{equation}\label{decomp}
g:=
f\chi_{\re^n\setminus E_\lambda}+\sum_{i=1}^\infty \left(\dashint_{Q_i} f(x)\,dw(x)\right)\chi_{Q_i}
\qquad\text{and}\qquad
b_i:=\left(f-\dashint_{Q_i} f(x)\,dw(x)\right)\chi_{Q_i}.
\end{equation}
We claim that the following properties hold:
\begin{equation}\label{g}
\|g\|_{L^{\infty}(w)}\leq C\lambda,
\end{equation}
\begin{equation}\label{b}
\supp b_i\subset Q_i\subset B_i\quad \textrm{and}\quad \int_{B_i}| b_i(x)|^pdw(x)\leq C\lambda^pw(B_i),
\end{equation}
\begin{equation}\label{sum}
\sum_{i=1}^{\infty}w(B_i)\leq \frac{C}{\lambda^p}\int_{\R^n}|f(x)|^pdw(x),
\end{equation}
\begin{equation}\label{overlap}
\sum_{i=1}^{\infty}\chi_{4B_i}\leq N,
\end{equation}
where $C$ and $N$ depend only on the dimension, $p$ and $w$. To show \eqref{g} we combine Lebesgue's differentiation theorem for the doubling measure $w$ and the fact that 
\begin{equation}\label{eq:qwf5fgt}
\left|\dashint_{Q_i} f(x)\,dw(x)\right|
\lesssim
\left(\dashint_{B_i^*} |f(x)|^p\,dw(x)\right)^{\frac1p}
\le \inf_{B_i^*} \mathcal{M}^w(|f|^p)^{\frac1p}
\le
\lambda,
\end{equation}
where the last estimate uses that $B_i^*\cap (\re^n\setminus E_\lambda)\neq\emptyset$. Analogously, 
$$
\left(\dashint_{B_i} |b_i(x)|^p\,dw(x)\right)^{\frac1p}
\lesssim
\left(\dashint_{B_i} |f(x)|^p\,dw(x)\right)^{\frac1p} 
\lesssim
\left(\dashint_{B_i^*} |f(x)|^p\,dw(x)\right)^{\frac1p}
\le
\lambda,
$$
and this gives \eqref{b}. To obtain \eqref{sum} we use that $w$ is a doubling measure (cf.~\eqref{pesosineqw:Ap}), that the cubes $\{Q_i\}_i$ are pairwise disjoint, and the weak-type $(1,1)$ inequality for $\mathcal{M}^w$:
$$
\sum_{i=1}^{\infty}w(B_i)
\lesssim
\sum_{i=1}^{\infty}w(Q_i)
=
w(E_\lambda)
\lesssim
\frac{1}{\lambda^p}\int_{\R^n}|f(x)|^pdw(x).
$$
Finally \eqref{overlap} follows from the construction of the Whitney cubes. 

Next, in order to justify the following computations we show that $g$ and $b_i$ belong to $L^2(w)$. Note first that by \eqref{eq:qwf5fgt} and the weak-type $(1,1)$ inequality for $\mathcal{M}^w$ we obtain
$$
\|g\|_{L^2(w)}
\le
\|f\|_{L^2(w)}+ \lambda w(E_\lambda)^{\frac12}
\le
\|f\|_{L^2(w)}+ \lambda^{1-\frac{p}2}\|f\|_{L^p(w)}^{\frac{p}2}<\infty,
$$
since $f\in L^\infty_c(\re^n)$. Analogously,
$$
\|b_i\|_{L^2(w)}
\le
\|f\|_{L^2(w)}+\lambda w(Q_i)^{\frac12}<\infty.
$$

To continue, for each $i\in \N$, denote by $r_i$ the radius of $B_i$ and  consider the operator $
A_{r_i}=I-(I-e^{-r_i^2L_w})^M,
$
where $M\in \N$ will be determined later.
Then,
\begin{align*}
\Gcal_{\hh}^{L_w}f
\leq
\Gcal_{\hh}^{L_w}g+\Gcal_{\hh}^{L_w}\br{\sum_{i=1}^{\infty}A_{r_i}b_i}+\Gcal_{\hh}^{L_w}\br{\sum_{i=1}^{\infty}\br{I-e^{-r_i^2L_w}}^M b_i}.
\end{align*}
Therefore, for all $\lambda>0$,
\begin{align*}
w\br{\bbr{x\in \R^n: \Gcal_{\hh}^{L_w}f(x)>\lambda}}
&\leq 
w\br{\bbr{x\in \R^n: \Gcal_{\hh}^{L_w}g(x)>\frac{\lambda}{3}}}
\\
&\qquad+
w\br{\bbr{x\in \R^n: \Gcal_{\hh}^{L_w}\br{\sum_{i=1}^{\infty}A_{r_i}b_i}(x)>\frac{\lambda}{3}}}
\\
&\qquad+
w\br{\bbr{x\in \R^n: \Gcal_{\hh}^{L_w}\br{\sum_{i=1}^{\infty}\br{I-e^{-r_i^2L_w}}^Mb_i}(x)>\frac{\lambda}{3}}}
\\
&=:I+II+III.
\end{align*}

We estimate each term in turn. Using the $L^2(w)$ boundedness of $\Gcal_{\hh}^{L_w}$ and properties \eqref{decomp}$-$\eqref{sum}, it follows that
$$
I \lesssim \frac{1}{\lambda^2} \int_{\R^n}|g(x)|^2 dw(x) 
\lesssim \frac{1}{\lambda^p}\int_{\R^n}|g(x)|^p dw(x) \lesssim\frac{1}{\lambda^p}
\int_{\R^n}|f(x)|^pdw(x).
$$
To estimate  term $II$, we take $0\le \psi \in L^2(w)$ with $\norm{\psi}_{L^2(w)}=1$, and obtain that
$$
\int_{\R^n} \Bigg|\sum_{i=1}^{\infty} A_{r_i}b_i\Bigg| \psi dw \le 
\sum_{i=1}^{\infty} \sum_{j=1}^{\infty} w(2^{j+1}B_i) \br{\dashint_{C_j(B_i)} |A_{r_i}b_i|^2 dw}^{\frac12} 
\br{\dashint_{2^{j+1}B_i} |\psi|^2 dw}^{\frac12}.
$$
Besides, observe that $A_{r_i}=\sum_{k=1}^{M} c_{k,M} e^{-r_i^2 k L_w}$ satisfies $\O(L^p(w)-L^2(w))$ by Lemma \ref{off-diag-sg}. This and properties \eqref{b}$-$\eqref{overlap} yield
\begin{align*}
\int_{\R^n} \Bigg|\sum_{i=1}^{\infty} A_{r_i}b_i\Bigg| \psi dw 
&\lesssim 
\sum_{i=1}^{\infty} \sum_{j=1}^{\infty} 2^{j\theta_1} \Upsilon\br{2^j}^{\theta_2} e^{-c4^j} w(2^{j+1}B_i) \Bigg(\dashint_{B_i}|b_i|^pdw\Bigg)^{\frac{1}{p}}  \inf_{B_i} \mathcal{M}_2^w\psi
\\&\lesssim 
\lambda \sum_{i=1}^{\infty} \sum_{j=1}^{\infty} e^{-c4^j} \int_{B_i} \mathcal{M}_2^w \psi(y) dw(y)
\\
&
\lesssim 
\lambda \int_{\cup_i B_i} \mathcal{M}_2^w \psi(y) dw(y)
\\&\lesssim
\lambda w(\cup_i B_i)^{1/2} \norm{\psi^2}_{L^{1}(w)}^{\frac12} 
\lesssim \lambda^{1-p/2} \norm{f}_{L^p(w)}^{\frac p2},
\end{align*}
where in the last step above we have used Kolmogorov's inequality along with the fact that $\mathcal{M}^w$ is of weak type $(1,1)$ (with respect to $w$).
Taking the supremum over all $\psi$ as above and using again that $\Gcal_{\hh}^{L_w}$ is bounded on $L^2(w)$
we obtain
$$
II\lesssim 
\frac{1}{\lambda^2}\norm{\sum_{i=1}^{\infty} A_{r_i}b_i}_{L^2(w)}^2
\lesssim 
\frac{1}{\lambda^p} \norm{f}_{L^p(w)}^p.
$$
Next we estimate $III$.
Applying Chebychev's inequality and \eqref{sum}, we have that
\begin{align}\label{III:1}
III
&\lesssim
w\br{\bigcup_{i=1}^{\infty}5B_i}
+
w\br{\bbr{x\in \R^n\setminus \bigcup_{i=1}^{\infty}5B_i: 
\Gcal_{\hh}^{L_w}\br{\sum_{i=1}^{\infty}\br{I-e^{-r_i^2L_w}}^M b_i}(x)>\frac{\lambda}{3}}}
\\\nonumber
&\lesssim
\sum_{i=1}^{\infty}w(B_i)+
\frac{1}{\lambda^2}\int_{\R^n\setminus \bigcup_{i=1}^{\infty}5B_i}
\abs{\Gcal_{\hh}^{L_w}\br{\sum_{i=1}^{\infty}\br{I-e^{-r_i^2L_w}}^Mb_i}(x)}^2dw(x)
\\\nonumber
&\lesssim
\frac{1}{\lambda^p}\int_{\R^n}|f(x)|^pdw(x)+\frac{1}{\lambda^2}\int_{\R^n\setminus \bigcup_{i=1}^{\infty}5B_i}
\abs{\Gcal_{\hh}^{L_w} \br{\sum_{i=1}^{\infty}\br{I-e^{-r_i^2L_w}}^Mb_i}(x)}^2dw(x).
\end{align}
It remains to estimate the last integral above. Denote $h_i=(I-e^{-r_i^2L_w})^M b_i$. Then, applying  Fubini's theorem,
\begin{align}\label{III:2}
&\int_{\R^n\setminus \bigcup_{i=1}^{\infty}5B_i} \abs{\Gcal_{\hh}^{L_w}\br{\sum_{i=1}^{\infty} h_i}(x)}^2dw(x)
\\ \nonumber
&
\qquad
\lesssim
\int_0^{\infty}\int_{\R^n}\abs{\sum_{i=1}^{\infty} \chi_{4B_i}(y) t\nabla_{y,t}e^{-t^2L_w}h_i(y)}^2 
w\left(B(y,t)\setminus \bigcup_{i=1}^{\infty}5B_i\right)\,\frac{dw(y)\, dt}{tw(B(y,t))}
\\
\nonumber
&
\qquad\qquad
+
\int_0^{\infty}\int_{\R^n}\abs{\sum_{i=1}^{\infty}\chi_{\R^n\setminus 4B_i}(y) t\nabla_{y,t}e^{-t^2L_w}h_i(y)}^2 
w\left(B(y,t)\setminus \bigcup_{i=1}^{\infty}5B_i\right)\,\frac{dw(y)\, dt}{tw(B(y,t))}
\\
\nonumber
&
\qquad
=:I_{\rm loc}+I_{\rm glob}.
\end{align}

Recall that the collection $\{4B_i\}_{i\in \N}$ has finite overlap. Besides, given $y\in 4B_i$, if there exists $x\in B(y,t)\setminus \bigcup_i 5B_i$, then $t>r_i$. Hence
\begin{align*}
I_{\rm loc} 
&\lesssim 
\sum_{i=1}^{\infty}\int_{r_i}^{\infty} \int_{4B_i} \abs{t\nabla_{y,t} e^{-t^2L_w}h_i(y)}^2 w\left(B(y,t)\setminus \bigcup_{i=1}^{\infty}5B_i\right)\,
\frac{dw(y)\, dt}{tw(B(y,t))}
\\
&\lesssim 
\sum_{i=1}^{\infty} \int_{r_i}^{\infty}\int_{4B_i}\bigg|t\nabla_{y,t}e^{-t^2L_w}h_i(y)\bigg|^2 \frac{dw(y)\, dt}{t}.
\end{align*} 
Since $\sqrt{t}\,\nabla_ye^{-t\,L_w}$, $t\,L_w\,e^{-t\,L_w}$ belong to $\mathcal{O}(L^p(w)-L^2(w))$, by Lemma \ref{off-diag} , we can find $\gamma_1 \le \gamma_2$ so that \eqref{off-AlphaBeta} holds simultaneously for both (with $\alpha=\gamma_1$ and $\beta=\gamma_2$). This, the $L^p(w)$ boundedness of the heat semigroup  and \eqref{b} allow us to obtain that there is $\gamma_1>0$ such that for every $t>r_i$, 
\begin{multline*}
\br{\dashint_{4B_i}\abs{t\nabla_{y,t}e^{-t^2L_w}(h_i\chi_{4B_i})(y)}^2 dw(y)}^{1/2}
\lesssim 
\br{\frac{r_i}{t}}^{\gamma_1}\br{\dashint_{4B_i}|h_i(y)|^p dw(y)}^{\frac{1}{p}}
\\ 
 \lesssim 
\br{\frac{r_i}{t}}^{\gamma_1}\br{\dashint_{B_i}|b_i(y)|^p dw(y)}^{\frac{1}{p}}
\lesssim 
\br{\frac{r_i}{t}}^{\gamma_1} \lambda.
\end{multline*} 
Using a similar argument and expanding $(I-e^{-r_i^2L_w})^M$ it follows that
\begin{align*}
\Bigg(\dashint_{4B_i}\Big|t\nabla_{y,t}e^{-t^2L_w}&(h_i\chi_{\R^n\setminus 4B_i})(y)\Big|^2 dw(y)\Bigg)^{\frac{1}{2}}
\\
\lesssim& 
\sum_{j\geq 2}\, \left(\frac{w(2^{j+1}B_i)}{w(4B_i)} \dashint_{2^{j+1}B_i}\abs{t\nabla_{y,t}e^{-t^2L_w}(h_i\chi_{C_j(B_i)})(y)}^2 dw(y)\right)^{\frac{1}{2}}
\\
 \lesssim & 
\sum_{j\geq 2}2^{nj}\,  \left(\dashint_{2^{j+1}B_i}\abs{t\nabla_{y,t}e^{-t^2L_w}((h_i\chi_{C_j(B_i)})\chi_{2^{j+1}B_i})(y)}^2 dw(y)\right)^{\frac{1}{2}}
\\
\lesssim&
\sum_{j\geq 2} 2^{j(n+\gamma_2)}\br{\frac{r_i}{t}}^{\gamma_1}\br{\dashint_{C_j(B_i)}|h_i(y)|^p dw(y)}^{\frac{1}{p}}
\\
\lesssim&
\sum_{j\geq 2} 2^{j(n+\gamma_2)} \br{\frac{r_i}{t}}^{\gamma_1} \sum_{k=1}^{M}\br{\dashint_{C_j(B_i)}|e^{-kr_i^2L_w}b_i(y)|^p dw(y)}^{\frac{1}{p}}
\\
\lesssim&
\sum_{j\geq 2} 2^{j(n+\gamma_2 +\theta_1+\theta_2)} \br{\frac{r_i}{t}}^{\gamma_1}e^{-c4^j}
 \br{\dashint_{B_i} |b_i(y)|^p dw(y)}^{\frac{1}{p}}
\\
\lesssim&
\br{\frac{r_i}{t}}^{\gamma_1} \lambda,
\end{align*}
 where in the fourth inequality we have used that  the term $k=0$ vanishes since $b_i$ is  supported in $B_i$ and the integral takes place in $C_j(B_i)$ with $j\ge 2$. 
Therefore, 
\begin{equation}\label{Iloc}
I_{\rm loc}
\lesssim 
\lambda^2 \sum_{i=1}^{\infty}w(4B_i) \int_{r_i}^{\infty}   \br{\frac{ r_i}{t}}^{2\gamma_1}\frac{dt}{t}
\lesssim 
\lambda^2 \sum_{i=1}^{\infty}
 w(B_i)
\lesssim
\lambda^{2-p} \int_{\R^n}|f(x)|^p dw(x).
\end{equation}

We turn now to  estimating  $I_{\rm glob}$. We claim that for every $i\ge 1$ and $j\ge 2$, if $M$ is chosen large enough,
\begin{equation}\label{Iij}
I_{ij} 
:=
\br{\int_{0}^{\infty}\dashint_{ C_j(B_i)} \big|t\nabla_{y,t}e^{-t^2L_w}
h_i(y)\big|^2 dw(y)\, \frac{dt}{t}}^{\frac12} 
\lesssim 2^{-j(2M-\theta_1)}\lambda.
\end{equation}
Assuming this momentarily,  take $0\leq \Psi \in L^2(\R_{+}^{n+1}, \frac{dwdt}{t})$ with $\norm{\Psi}_{L^2(\R_{+}^{n+1}, \frac{dwdt}{t})}=1$
and write $\widetilde \Psi(y):=\int_{0}^{\infty} \Psi(y,t)^2 \frac{dt}{t}$. Then, taking $M>n+\theta_1/2$, using Kolmogorov's inequality and that $\mathcal{M}^w$ is of weak type-$(1,1)$ we conclude that
\begin{align*}
&
\int_0^{\infty} \int_{\R^n} \bigg|\sum_{i=1}^{\infty}\chi_{\R^n\setminus 4B_i}(y) t\nabla_{y,t}e^{-t^2L_w}h_i(y)\bigg| \Psi(y,t) \frac{dw(y)\, dt}{t}
\\
 &
\qquad
\leq
\sum_{i=1}^{\infty} \sum_{j\geq 2} w(2^{j+1}B_i) I_{ij}
 \br{\int_{0}^{\infty}\dashint_{ C_j(B_i)} \Psi(y,t)^2 \frac{dw(y)\, dt}{t}}^{\frac12}
\\
 &
\qquad\leq
\lambda \sum_{j\geq 1} 2^{j(2n-2M+\theta_1)} \sum_{i=1}^{\infty} w(B_i) 
\inf_{x \in B_i} \br{\mathcal M^{w}\widetilde \Psi(x)}^{\frac12}
\\
 &
\qquad\lesssim
\lambda \int_{\cup_i B_i} \br{\mathcal M^{w}\widetilde \Psi(x)}^{\frac12} dw(x)
\\
 &
\qquad\lesssim
\lambda w\Big(\bigcup_i B_i\Big)^{\frac12}
\lesssim
\lambda^{1-\frac{p}{2}} \norm{f}_{L^p(w)}^{\frac{p}{2}},
\end{align*}
where last estimate follows from \eqref{sum}. Taking the sup over all functions $\Psi$ as above yields
$$
\br{I_{\rm glob}}^{\frac12} 
\leq
\br{\int_0^{\infty} \int_{\R^n}\bigg|\sum_{i=1}^{\infty}\chi_{\R^n\setminus 4B_i}(y) t\nabla_{y,t}e^{-t^2L_w}h_i(y)\bigg|^2 \frac{dw(y)\, dt}{t}}^{\frac12}
\lesssim
\lambda^{1-\frac{p}{2}} \norm{f}_{L^p(w)}^{\frac{p}{2}}.
$$
Plugging this and \eqref{Iloc} into \eqref{III:2} and the latter into \eqref{III:1}, we see that $III\lesssim \lambda^{-p}\,\|f\|_{L^p(w)}^p$. This eventually finishes the proof of \eqref{weak p}.

To complete the proof of Case 3 we need to show \eqref{Iij}. Note first that
$$
\br{I_{ij}}^2 
\lesssim
\int_{0}^{\infty}\dashint_{ C_j(B_i)} \abs{t\nabla_{y}e^{-t^2L_w}h_i(y)}^2 dw(y) \frac{dt}{t}
+\int_{0}^{\infty}\dashint_{ C_j(B_i)} \abs{t^2 L_w e^{-t^2L_w}h_i(y)}^2 dw(y)\frac{dt}{t}
=:I_{ij,1}+I_{ij,2}. 
$$
We estimate the first term $I_{ij,1}$ by using functional calculus, where the notation is taken from \cite{Auscher} and \cite[Section 7]{AuscherMartell:III}. 
As usual write $\vartheta\in[0,\pi/2)$ for the sup of $|{\rm arg}(\langle Lf,f\rangle_{L^2(w)})|$ over all $f$ in the domain of $L_w$.
Let $0<\vartheta <\theta<\nu<\mu<\pi /2$ and note that, for a fixed $t>0$, $\phi(z,t):=e^{-t^2 z}(1-e^{-r_i^2 z})^M$ is holomorphic in the open sector $\Sigma_\mu=\{z\in\mathbb{C}\setminus\{0\}:|{\rm arg} (z)|<\mu\}$ and satisfies $|\phi(z,t)|\lesssim |z|^M\,(1+|z|)^{-2M}$ (with implicit constant depending on $t>0$, $r_i$,  $\mu$,  and $M$) for every $z\in\Sigma_\mu$. Hence we can write
\[
\phi(L_w,t )=\int_{\Gamma } e^{-zL_w }\eta (z,t)dz,
\qquad \text{where} \quad 
\eta(z,t) = \int_{\gamma} e^{\zeta z} \phi(\zeta,t ) d\zeta.
\]
Here $\Gamma=\partial\Sigma_{\frac\pi2-\theta}$ with positive orientation (although orientation is irrelevant for our computations) and 
$\gamma=\re_+e^{i\,{\rm sign}({\rm Im} (z))\,\nu}$. It is not difficult to see that for every $z\in \Gamma$,
\begin{equation}\label{eq:eta}
|\eta(z,t)| \lesssim \frac{r_i^{2M}}{(|z|+t^2)^{M+1}}.
\end{equation}
 where the implicit constant is independent of $t$ and $r_i$.
This, the fact that $\sqrt{z}\nabla_y e^{-zL_w}\in\mathcal O (L^p(w)-L^2(w))$ (see \cite[Corollary 7.4]{CruzMartellRios}), and \eqref{b} imply
\begin{align*}
\br{\dashint_{C_j(B_i)} \abs{t\nabla_y\phi(L_w,t)b_i}^2 dw}^{\frac12}
&\lesssim 
\int_{\Gamma} \br{\dashint_{C_j(B_i)} \abs{\sqrt{z}\nabla_y e^{- z L_w }b_i}^2 dw}^{\frac12}
\frac{t r_i^{2M}}{|z|^{\frac{1}{2}}(|z|+t^2)^{M+1}} |dz|
\\ &\lesssim 
2^{j\theta_1} \br{\dashint_{B_i} |b_i|^p dw}^{\frac1p}\int_0^\infty \Upsilon\br{\frac{2^j r_i}{\sqrt s}}^{\theta_2} e^{-\frac{c4^j r_i^2}{s}} t\,s^{1/2} \frac{r_i^{2M}}{(s+t^2)^{M+1}}\frac{ds}{s}
\\ &\lesssim
2^{j\theta_1}\lambda\int_0^\infty \Upsilon\br{\frac{2^j r_i}{\sqrt s}}^{\theta_2} e^{-\frac{c4^j r_i^2}{s}} t\,s^{1/2} \frac{r_i^{2M}}{(s+t^2)^{M+1}}\frac{ds}{s}.
\end{align*}
Hence, after changing  the  variables $s$ and $t$ into $\frac{4^j r_i^2}{s^2}$ and $2^j r_i t$  respectively,
\begin{multline*}
I_{ij,1}
\lesssim
2^{2j\theta_1}\lambda^2 \int_0^\infty \br{\int_0^\infty \Upsilon\br{\frac{2^j r_i}{\sqrt s}}^{\theta_2} e^{-\frac{c4^j r_i^2}{s}} t\,s^{1/2} \frac{r_i^{2M}}{(s+t^2)^{M+1}}\frac{ds}{s}}^2 \frac{dt}{t}
\\\approx
2^{-2j(2M-\theta_1)} \lambda^2 \int_0^\infty \br{\int_0^\infty \Upsilon\br{s}^{\theta_2} e^{-cs^2} \frac{t}{s} \frac{1}{\big(\frac{1}{s^2}+t^2\big)^{M+1}}\frac{ds}{s}}^2 \frac{dt}{t}
=:
2^{-2j(2M-\theta_1)} \lambda^2 \int_0^\infty \Theta(t)^2 \frac{dt}{t}.
\end{multline*}
Choosing $M$ so that $2M+1-\theta_2>0$, if $t\ge 1$,
$$
\Theta(t)
\le 
\int_0^{\frac1t} \frac 1{s^{\theta_2}} \frac{t}{s} s^{2(M+1)}\frac{ds}{s} + 
\int_{\frac1t}^{1} \frac 1{s^{\theta_2}} \frac{t}{s} \frac{1}{t^{2M+2}} \frac{ds}{s} 
+\int_{1}^{\infty} s^{\theta_2} e^{-cs^2} \frac{t}{s} \frac{1}{t^{2M+2}} \frac{ds}{s}
\lesssim  \frac{1}{t^{2M-\theta_2}}.
$$
Similarly, for $0<t<1$, 
$$
\Theta(t) \le 
\int_0^{1} \frac 1{s^{\theta_2}} \frac{t}{s} s^{2(M+1)} \frac{ds}{s} + 
\int_{1}^{\infty} s^{\theta_2} e^{-cs^2}  \frac{t}{s} s^{2M+2} \frac{ds}{s}
\lesssim t.
$$
Consequently, if $2M-\theta_2>0$,
\begin{align*}
I_{ij,1}
&\lesssim
2^{-2j(2M-\theta_1)} \lambda^2 \br{\int_0^1 t^2 \frac{dt}{t}
+ \int_1^\infty \frac{1}{t^{4M-2\theta_2}} \frac{dt}{t}} \lesssim 2^{-2j(2M-\theta_1)}  \lambda^2.
\end{align*}
This and an analogous estimate for $I_{ij,2}$ complete the proof of \eqref{Iij}.
 In fact, the formal argument for  $I_{ij,2}$ is the same as the one for $I_{ij,1}$, but taking $\phi(z,t):=t^2z e^{-t^2 z}(1-e^{-r_i^2 z})^M$. Consequently, we have that $|\eta(z,t)|\lesssim \frac{t^2r_i^{2M}}{(|z|+t^2)^{M+2}}$ in place of \eqref{eq:eta}, and we use $e^{-zL_w}\in\mathcal{O}(L^p(w)-L^2(w))$ instead of $\sqrt{z}\nabla_y e^{-zL_w}\in\mathcal O (L^p(w)-L^2(w))$. We leave the details to the interested reader. 

\end{proof}

\bigskip

\subsection{Proof of Theorem \ref{thm:SF-Heat}}\label{section:proof:Heat}


 Assuming momentarily that $\Gcal_{\hh}^{L_w}$ is bounded on $L^p(vdw)$. Then, \eqref{eq:all-SH} and Theorem \ref{theor:control-SF-Heat} part $(a)$ take care of the boundedness of $\Scal_{m,\hh}^{L_w}$, $G_{m,\hh}^{L_w}$, 
$\Gcal_{m,\hh}^{L_w}$ when $m\ge 1$, and also that of $G_{\hh}^{L_w}$. Thus we only need to consider the boundedness of $\Gcal_{\hh}^{L_w}$,  which will follow from Proposition \ref{prop:maximal}, part $(a)$ and the following auxiliary result: 

\begin{proposition}\label{prop:Cpo-Mp0-S-heat}
If we set
$$
\widetilde{\mathcal{C}}_{p_0}^{L_w}f(x)
:=\sup_{B\ni x}\br{\frac{1}{w(B)}\int_{B}\br{\int_{0}^{r_B}\int_{B(x,t)} \abs{t\nabla_{y,t}e^{-t^2L_w}f(y)}^2 \frac{dw(y) \, dt}{tw(B(y,t))}}^{\frac{p_0}{2}} \,dw(x)}^{\frac{1}{p_0}},
$$
then, for every $p_-(L_w)<p_0\le 2$ and $f\in L^\infty_c(\re^n)$ there holds
\begin{equation}\label{Cpo-Mp0-S-heat}
\widetilde{\mathcal{C}}_{p_0}^{L_w}f(x)\lesssim \mathcal{M}_{p_0}^wf(x), \qquad \forall \,x\in \mathbb{R}^n.
\end{equation}
Eventually, $\Gcal_{\hh}^{L_w}f$ is bounded on $L^p(vdw)$ for every $v\in A_\infty(w)$ and $p\in \mathcal{W}_v^w(p_-(L_w),\infty)$.
\end{proposition}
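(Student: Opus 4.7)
The plan is to first establish the pointwise inequality \eqref{Cpo-Mp0-S-heat} via a local/global decomposition of $f$, and then derive the $L^p(vdw)$-boundedness of $\Gcal_{\hh}^{L_w}$ by combining this pointwise estimate with Proposition \ref{prop:maximal} part $(a)$ and the boundedness of the weighted maximal function. Fix $x\in\re^n$ and a ball $B\ni x$. Note that for every $x'\in B$ and $0<t\le r_B$ one has $B(x',t)\subset 2B$, so only the values of $F(y,t):=t\nabla_{y,t}e^{-t^2 L_w}f(y)$ with $y\in 2B$, $0<t\le r_B$ enter the truncated cone integral inside $\widetilde{\mathcal{C}}_{p_0}^{L_w}f(x)$ when the supremum is attained at~$B$.

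Write $f=f_{\rm loc}+f_{\rm glob}:=f\chi_{4B}+f\chi_{\re^n\setminus 4B}$. For the local part, bound the truncated cone integral pointwise by $(\Gcal_{\hh}^{L_w}(f_{\rm loc})(x'))^2$ and invoke the $L^{p_0}(w)$-boundedness of $\Gcal_{\hh}^{L_w}$ from Proposition \ref{thm:boundednessGH} (which applies since $p_0>p_-(L_w)$), getting
$$
\br{\dashint_B\br{\Gcal_{\hh}^{L_w}(f_{\rm loc})(x')}^{p_0} dw(x')}^{1/p_0}\lesssim \br{\dashint_{4B}|f|^{p_0}\,dw}^{1/p_0} \le \inf_B\mathcal{M}_{p_0}^w f.
$$
For the global piece, decompose $f_{\rm glob}=\sum_{j\ge 2} f\chi_{C_j(B)}$ and set $F_j(y,t):=t\nabla_{y,t}e^{-t^2L_w}(f\chi_{C_j(B)})(y)$. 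Minkowski's inequality in $L^2$ on the cone gives
$$
\br{\int_0^{r_B}\int_{B(x',t)}|F_{\rm glob}(y,t)|^2\frac{dw(y)\,dt}{tw(B(y,t))}}^{1/2}\le\sum_{j\ge 2}\br{\int_0^{r_B}\int_{B(x',t)}|F_j(y,t)|^2\frac{dw(y)\,dt}{tw(B(y,t))}}^{1/2}.
$$
Passing to the $L^{p_0}(dw)$-average on $B$ via Minkowski (if $p_0\ge 1$) or by the subadditivity of $s\mapsto s^{p_0}$ (if $p_0<1$), matters reduce to estimating each annular term. Fubini together with $w(B(y,t)\cap B)\le w(B(y,t))$ and the containment $B(x',t)\subset 2B$ yield
$$
\frac{1}{w(B)}\int_B\int_0^{r_B}\int_{B(x',t)}|F_j(y,t)|^2\frac{dw(y)\,dt\,dw(x')}{tw(B(y,t))}\lesssim \int_0^{r_B}\dashint_{2B}|F_j(y,t)|^2 dw(y)\,\frac{dt}{t}.
$$
Since $p_-(L_w)<p_0\le 2<q_+(L_w)\le p_+(L_w)$, Lemma \ref{off-diag-sg} gives $\{t\nabla_{y,t}e^{-t^2L_w}\}_{t>0}\in\mathcal{O}(L^{p_0}(w)-L^2(w))$; plugging the resulting annular off-diagonal estimate into the $t$-integral over $(0,r_B)$ produces a super-exponentially decaying factor $2^{j\theta_1}e^{-c4^j}$, bounding the right-hand side by $2^{2j\theta_1}e^{-2c4^j}\br{\dashint_{C_j(B)}|f|^{p_0}\,dw}^{2/p_0}\lesssim 2^{2j\theta_1}e^{-2c4^j}(\mathcal{M}_{p_0}^w f(x))^2$. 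Summing over $j\ge 2$ closes the global piece, and taking the supremum over $B\ni x$ completes the proof of \eqref{Cpo-Mp0-S-heat}.

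For the boundedness assertion, pick $p\in\mathcal{W}_v^w(p_-(L_w),\infty)$; by self-improvement of $A_p(w)$ one can choose $p_0\in(p_-(L_w),\min\{p,2\})$ with $v\in A_{p/p_0}(w)$. Applying Proposition \ref{prop:maximal} part $(a)$ to $F(y,t)=t\nabla_{y,t}e^{-t^2L_w}f(y)$ (so that $\mathcal{A}_w F=\Gcal_{\hh}^{L_w}f$ and $\mathcal{C}_{w,p_0}F=\widetilde{\mathcal{C}}_{p_0}^{L_w}f$), and then chaining \eqref{Cpo-Mp0-S-heat} with the $L^p(vdw)$-boundedness of $\mathcal{M}_{p_0}^w$ (cf.~Remark \ref{remark:weightedHLM}), one obtains
$$
\|\Gcal_{\hh}^{L_w}f\|_{L^p(vdw)}\lesssim \|\widetilde{\mathcal{C}}_{p_0}^{L_w}f\|_{L^p(vdw)}\lesssim \|\mathcal{M}_{p_0}^w f\|_{L^p(vdw)}\lesssim \|f\|_{L^p(vdw)}.
$$
The principal technical hurdle lies in the global piece: carefully orchestrating the annular $\mathcal{O}(L^{p_0}(w)-L^2(w))$ off-diagonal estimates for $t\nabla_{y,t}e^{-t^2L_w}$ with the $t$-integration over $(0,r_B)$ to extract the summable factor $e^{-c4^j}$ in $j$.
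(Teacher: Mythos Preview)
Your argument is essentially the paper's own proof: the same local/global split (the paper uses $8B$ and $C_j(2B)$, you use $4B$ and $C_j(B)$, which is immaterial), the same appeal to Proposition~\ref{thm:boundednessGH} for the local part, the same $\mathcal O(L^{p_0}(w)-L^2(w))$ off-diagonal estimates summed over annuli for the global part, and the same chain $\Gcal_{\hh}^{L_w}f=\mathcal A_wF\to \mathcal C_{w,p_0}F\to \mathcal M_{p_0}^w f$ via Proposition~\ref{prop:maximal}(a) for the boundedness. The only slip is expository: after reducing to each annular term you silently jump from the $L^{p_0}(dw)$-average over $B$ to the $L^2$ quantity in your displayed Fubini identity; you need to insert one line of H\"older/Jensen (valid since $p_0\le 2$) to pass from $\big(\dashint_B(\cdot)^{p_0/2}dw\big)^{1/p_0}$ to $\big(\dashint_B(\cdot)\,dw\big)^{1/2}$ before applying Fubini, exactly as the paper does.
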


Assuming this result momentarily, Theorem \ref{thm:SF-Heat} follows immediately in view of Theorem \ref{theor:control-SF-Heat}.

\medskip
\begin{proof}[Proof of Proposition \ref{prop:Cpo-Mp0-S-heat}]
Fix $p_-(L_w)<p_0\le 2$ and  $x_0\in\R^n$. Take an arbitrary ball $B\ni x_0$ with radius $r_B$ and split  $f\in L^\infty_c(\re^n)$  into its local and its global part:
$f=f_{\rm loc}+f_{\rm glob}:=f\chi_{8B}+f\chi_{\R^n\setminus 8B}$.

In order to estimate $f_{\rm loc}$, note that by Proposition \ref{thm:boundednessGH}, $\Gcal_{\hh}^{L_w}$ is bounded on $L^{p_0}(w)$. Then
\begin{multline*}
\br{\frac{1}{w(B)}\int_{B}\br{\int_{0}^{r_B}\int_{B(x,t)} \abs{t\nabla_{y,t}e^{-t^2L_w}f_{\rm loc}(y)}^2 \frac{dw(y) \, dt}{tw(B(y,t))}}^{\frac{p_0}{2}} \,dw(x)}^{\frac{1}{p_0}}
\\ \le
\br{\frac{1}{w(B)}\int_{\mathbb{R}^n} \Gcal_{\hh}^{L_w} f_{\rm loc}(x)^{p_0}\,dw(x)}^{\frac{1}{p_0}}
\lesssim 
\br{\frac{1}{w(B)}\int_{\mathbb{R}^n}|f_{\rm loc}(x)|^{p_0} \,dw(x)}^{\frac{1}{p_0}}
\\ \lesssim
\br{\frac{1}{w(8B)}\int_{8B}|f(x)|^{p_0} \,dw(x)}^{\frac{1}{p_0}}
\lesssim 
\mathcal{M}_{p_0}^wf(x_0).
\end{multline*}
As for $f_{\rm glob}$, note first that by Lemma \ref{off-diag-sg}, 
$\{\sqrt{t}\nabla_{y}e^{-tL_w}\}_{t>0},\ \{tL_w e^{-tL_w}\}_{t>0}\in \mathcal{O}(L^{p_0}(w)\rightarrow L^2(w))$. Use this, H\"older's inequality for $2/p_0$ and argue as in the proof of \eqref{Cw-Cw,2:equiv}.
 Then, 
\begin{align*}
\Bigg(\frac{1}{w(B)}\int_{B}&\br{\int_{0}^{r_B}\int_{B(x,t)} \abs{t\nabla_{y,t}e^{-t^2L_w}f_{\rm glob}(y)}^2 \frac{dw(y) \, dt}{tw(B(y,t))}}^{\frac{p_0}{2}} \,dw(x)\bigg)^{\frac{1}{p_0}}
\\ 
&\lesssim 
\sum_{j\ge 2} \br{\int_{0}^{r_B}\dashint_{2B} \abs{t\nabla_{y,t}e^{-t^2L_w}(f\,\chi_{C_j(2B)})(y)}^2\frac{dw(y) \, dt}{t}}^{\frac{1}{2}}
\\ 
&\lesssim 
\sum_{j\geq 2}  \br{\int_{0}^{r_B}\br{\dashint_{2^{j+2}B}|f(y)|^{p_0} \, dw(y)}^{\frac{2}{p_0}}2^{2 \,j \theta_1}
 \Upsilon\br{\frac{2^{j+1}r_B}{t}}^{2\,\theta_2}
e^{-c\frac{4^jr_B^2}{t^2}}\frac{dt}{t}}^{\frac{1}{2}}
\\
&\lesssim 
\mathcal{M}_{p_0}^wf(x_0)\sum_{j\geq 2}2^{j\theta_1} \br{\int_{2^j}^{\infty}s^{2\theta_2}e^{-cs^2} \frac{ds}{s}}^{\frac{1}{2}}
\lesssim \mathcal{M}_{p_0}^wf(x_0).
\end{align*} 
Collecting the estimates obtained for $f_{\rm loc}$ and for $f_{\rm glob}$ we can conclude that
$$
\br{\frac{1}{w(B)}\int_{B}\br{\int_{0}^{r_B}\int_{B(x,t)} \abs{t\nabla_{y,t}e^{-t^2L_w}f(y)}^2 \frac{dw(y) \, dt}{tw(B(y,t))}}^{\frac{p_0}{2}} \, dw(x)}^{\frac{1}{p_0}}
\lesssim 
\mathcal{M}_{p_0}^wf(x_0).
$$
Taking the sup over all balls $B$ such that $x_0\in B$, we get \eqref{Cpo-Mp0-S-heat}.

To complete the proof we need to show the boundedness of $\Gcal_{\hh}^{L_w}$. To this end, take $F(y,t):=t\nabla_{y,t}e^{-t^2L_w}f(y)$, so that $\Gcal_{\hh}^{L_w}f(x)=\mathcal{A}_wF(x)$ and $\widetilde{\mathcal{C}}_{p_0}^{L_w}f(x)=\mathcal{C}_{w,p_0}F(x)$. Thus, \eqref{Cpo-Mp0-S-heat} and Proposition \ref{prop:maximal}, part ($a$),  imply that for every $p_-(L_w)<p_0\le 2$, $0<p<\infty$, $v\in A_{\infty}(w)$
\begin{equation}
 \norm{\Gcal_{\hh}^{L_w}f}_{L^p(vdw)}
 \lesssim \norm{\widetilde{\mathcal{C}}_{p_0}^{L_w}f}_{L^p(vdw)}
 \lesssim \norm{\mathcal{M}_{p_0}^wf}_{L^p(vdw)}, 
\qquad
\forall\,f\in  L^\infty_c(\R^n).
\label{eq:eyhthy}
\end{equation}
Note that the fact that $f\in  L^\infty_c(\R^n)$ guarantees that $t\nabla_{y,t}e^{-t^2L_w}f(y)\in L^2_{\rm loc}(\R_{+}^{n+1},dwdt)$,  since $t\nabla_{y,t}e^{-t^2L_w}$ is bounded on $L^2(w)$ uniformly in $t$. 

Next, fix $v\in A_\infty(w)$ and $p\in \mathcal{W}_v^w(p_-(L_w),\infty)$. In particular, we can find $p_0$ such that $p_-(L_w)<p_0\le\min\{p,2\}$ (close enough to $p_-(L_w)$) such that $v\in A_{\frac{p}{p_0}}(w)$. Therefore, $\mathcal{M}_{p_0}^w$ is bounded on $L^p(vdw)$. This and  \eqref{eq:eyhthy} yield
$$
\norm{\Gcal_{\hh}^{L_w}f}_{L^p(vdw)}
 \lesssim \norm{f}_{L^p(vdw)},
\qquad
 \forall\,f\in  L^\infty_c(\R^n).
$$
The proof is then complete.
\end{proof}


\subsection{Proof of Theorem \ref{thm:SF-Poisson}}\label{section:proof:SP-bounded}


The desired estimates follow very easily from Theorem \ref{theor:control-SF-Poisson} and Theorem \ref{thm:SF-Heat}. To prove $(a)$ we just use Theorem \ref{theor:control-SF-Poisson} part $(b)$ and Theorem \ref{thm:SF-Heat} part $(a)$. To obtain $(b)$, we only need to invoke Theorem \ref{theor:control-SF-Poisson} parts $(a)$, $(c)$, $(d)$ and Theorem \ref{thm:SF-Heat} (note that Theorem \ref{theor:control-SF-Poisson} part $(c)$ and Theorem \ref{thm:SF-Heat} part $(b)$ are used for the case $K=0$). Details are left to the interested reader.

\subsection{Proof of Theorem \ref{theor:control-SF-Heat}}\label{section:proof:control:Heat}
 
We first note that part $(a)$ is trivial. To prove $(b)$ and $(c)$ we fix $0<p<\infty$ and $v\in A_{\infty}(w)$. Pick $r> \max\{\frac{p}{2},\mathfrak{r}_v(w)\}$ so that $v\in A_{r}(w)$ and $0<p < 2r$. If $|x-y|<t$, then $B(x,t)\subset B(y,2t)$ and $B(y,t)\subset B(x,2t)$. Since $w$ is a doubling measure, one has
\begin{align}\label{doublingcondition}
\frac{w(B(x,t))}{w(B(y,t))}\leq\frac{w(B(y,2t))}{w(B(y,t))}\leq C
\quad\textrm{and}\quad
\frac{w(B(y,t))}{w(B(x,2^{j+1}t))}\leq\frac{w(B(x,2t))}{w(B(x,2^{j+1}t))}\leq 1,\textrm{ for all } j\geq 0.
\end{align}

We now prove $(c)$. Let $m\in \N$ and note that
\begin{equation}\label{eq:54wt5t54}
(t^2L_w)^{m+1} e^{-t^2 L_w} 
=
2^{m+1}
A_{\frac{t^2}{2}}
B_{\frac{t^2}{2},m},
\qquad
\text{with}\quad
A_{t}:=t L_w e^{-t L_w} 
\quad \textrm{and} \quad 
B_{t,m}:=\left(t L_w\right)^me^{-t L_w}.
\end{equation}
Using \eqref{doublingcondition}, the fact that  $A_t\in \mathcal{O}(L^{2}(w)- L^2(w))$, and
Proposition \ref{prop:alpha}, which can be applied by the choice of $r$, we obtain
\begin{align}\label{rfwerfer}
&\br{\int_{\mathbb{R}^n}\br{\int_0^{\infty}\int_{B(x,t)}\abs{A_{\frac{t^2}{2}}
B_{\frac{t^2}{2},m}f(y)}^2 \frac{dw(y) \, dt}{tw(B(y,t))}}^{\frac{p}{2}} \ v(x)dw(x)}^{\frac{1}{p}}
\\ 
\nonumber
&\qquad   \lesssim \sum_{j\geq 1} \br{\int_{\mathbb{R}^n}\br{\int_0^{\infty}\dashint_{B(x,t)}\abs{A_{\frac{t^2}{2}}
\br{\br{B_{\frac{t^2}{2},m}f}\chi_{C_j(B(x,t))}}(y)}^2 \frac{dw(y) \, dt}{t}}^{\frac{p}{2}} \ v(x)dw(x)}^{\frac{1}{p}}
\\ 
\nonumber
&\qquad
 \lesssim \sum_{j\geq 1} 2^{j(\theta_1+\theta_2)} e^{-c4^j} \br{\int_{\mathbb{R}^n}\br{\int_0^{\infty}\dashint_{B(x,2^{j+1}t)} \abs{B_{\frac{t^2}{2},m}f(y)}^2 \frac{dw(y) \, dt}{t}}^{\frac{p}{2}} \ v(x)dw(x)}^{\frac{1}{p}}
\\ \nonumber
& \qquad
\lesssim  \sum_{j\geq 1} e^{-c4^j} \br{\int_{\mathbb{R}^n}\br{\int_0^{\infty}\int_{B(x,2^{j+1}\sqrt{2}t)}\abs{
B_{t^2,m}f(y)}^2 \frac{dw(y) \, dt}{tw(B(y,t))}}^{\frac{p}{2}} \ v(x)dw(x)}^{\frac{1}{p}}
\\ \nonumber
&\qquad \lesssim  \sum_{j\geq 1} e^{-c4^j}  \norm{\Scal_{m,\hh}^{L_w}f}_{L^p(vdw)}
\\ \nonumber
&\qquad\lesssim  \norm{\Scal_{m,\hh}^{L_w}f}_{L^p(vdw)},
\end{align}
Note that in the fourth estimate we have changed the variable $t$ into $\sqrt{2}t$ and used that $w(B(y,t))\le w(B(x,2^{j+3}t))\lesssim w(B(x,2^{j+1}\sqrt{2}t))$ whenever $y\in B(x,2^{j+1}\sqrt{2}t)$. Collecting \eqref{eq:54wt5t54} and \eqref{rfwerfer} we conclude as desired $(c)$.

We finally prove part $(b)$. Take $m\in \N$ and note that elementary computations show 
$$
\Gcal_{m,\hh}^{L_w}f(y)
\le
G_{m,\hh}^{L_w}f(y)
+
2m\Scal_{m,\hh}^{L_w}f(y)
+
2 \Scal_{m+1,\hh}^{L_w}f(y),
\qquad y\in\mathbb{R}^n.
$$
Thus $(b)$ will follow from $(c)$ once we control the term involving $G_{m,\hh}^{L_w}f$. To that end we proceed as before and observe that
$$
t \nabla_{y} (t^2L_w)^m e^{-t^2 L_w} 
=
2^{m+\frac12}
A_{\frac{t^2}{2}}
B_{\frac{t^2}{2},m},
\qquad
\text{with}\quad
A_{t}:=\sqrt{t} \nabla_{y} e^{-t L_w} 
\quad \text{and} \quad B_{t,m}:=\left(t L_w\right)^me^{-t L_w}.
$$
We can now repeat the computations in \eqref{rfwerfer}, since again $A_t\in \mathcal{O}(L^2(w)-L^2(w))$, to conclude that
$$
\norm{G_{m,\hh}^{L_w}f}_{L^p(vdw)}
\lesssim
\norm{\Scal_{m,\hh}^{L_w}f}_{L^p(vdw)}.
$$
This, together with the previous considerations, allows us to complete the proof of $(b)$ and thus that of Theorem \ref{theor:control-SF-Heat}. 


\subsection{Proof of Theorem \ref{theor:control-SF-Poisson}}\label{section:proof:control:Poisson}

Recall that $w\in A_2$ has been fixed already. If $w\in A_1$ we set $\widehat r:=1$. Otherwise, let  $\widehat{r}$ be so that $r_w<\widehat r<2$ (eventually $\widehat{r}$ will be chosen very close to $r_w$) .   Note that in any scenario we have $w\in A_{\widehat r}$.


The proof of part $(a)$ is trivial. We start proving part $(b)$. We need to show that for every $K\in \N$ 
\begin{align}\label{SKP}
\norm{\Scal_{K,\pp}^{L_w}f}_{L^p(vdw)}\lesssim \norm{\Scal_{K,\hh}^{L_w}f}_{L^p(vdw)},
\end{align}
for all $v\in A_{\infty}(w)$ and $p\in \mathcal{W}_{v}^w(0,\br{p_+(L_w)}_w^{K,*})$, that is, for all $0<p<\br{p_+(L_w)}_w^{K,*}$ and 
$v\in RH_{(\br{p_+(L_w)}_w^{K,*}/p)'}(w)$. By Theorem \ref{theor:extrapol} parts $(b)$ and $(c)$ one can see that it suffices to prove such estimate for some fixed $p$ in the same range and all $v$ in the corresponding reverse Hölder class. In particular, as $\br{p_+(L_w)}_w^{K,*}>2$, we can take $p=2$
and hence we need to obtain that 
\begin{align}\label{SKP*}
\norm{\Scal_{K,\pp}^{L_w}f}_{L^2(vdw)}\lesssim \norm{\Scal_{K,\hh}^{L_w}f}_{L^2(vdw)},
\qquad
\forall\,v\in RH_{(\br{p_+(L_w)}_w^{K,*}/2)'}(w).
\end{align}

Fix then  $v\in RH_{(\br{p_+(L_w)}_w^{K,*}/2)'}(w)$ (notice that if $\br{p_+(L_w)}_w^{K,*}=\infty$ the condition on the weight $v$ becomes $v\in A_\infty(w)$)  and set
$$
B_{t,K}:=\left(t^2L_w\right)^{K}
e^{-t^2L_w},
$$
and recall the subordination formula \eqref{formula:SF}.
This and Minkowski's inequality imply
\begin{align*}
&\norm{\Scal_{K,\pp}^{L_w}f}_{L^2(vdw)}
 \lesssim 
\br{\int_{\mathbb{R}^n}\iint_{\Gamma(x)}\abs{(t^2L_w)^{K}\int_0^{\infty} e^{-u}u^{\frac{1}{2}}
e^{-\frac{t^2}{4u}L_w}f(y) \ \frac{du}{u}}^2  \frac{dw(y) \, dt}{tw(B(y,t))} \ v(x)dw(x)}^{\frac{1}{2}}
\\
&\qquad   \lesssim 
\int_0^{\infty}e^{-u}u^{\frac{1}{2}}\br{\int_{\mathbb{R}^n} \int_0^{\infty}\int_{B(x,t)}\abs{(t^2L_w)^{K}e^{-\frac{t^2}{4u}L_w}f(y)}^2 \ \frac{dw(y) \, dt}{tw(B(y,t))} v(x)dw(x)}^{\frac{1}{2}}\ \frac{du}{u}
\\
&\qquad \lesssim
\int_0^{\frac{1}{4}}e^{-u}u^{K+\frac{1}{2}}\br{\int_{\mathbb{R}^n} \int_0^{\infty}\int_{B(x,t)}\abs{B_{\frac{t}{2\sqrt{u}},K}f(y)}^2 \ \frac{dw(y) \, dt}{tw(B(y,t))}  \ v(x)dw(x)}^{\frac{1}{2}} \ \frac{du}{u}
\\
&\qquad\qquad +
\int_{\frac{1}{4}}^{\infty}e^{-u}u^{K+\frac{1}{2}}\br{\int_{\mathbb{R}^n} \int_0^{\infty}\int_{B(x,t)}\abs{B_{\frac{t}{2\sqrt{u}},K}f(y)}^2 \ \frac{dw(y) \, dt}{tw(B(y,t))}  \ v(x)dw(x)}^{\frac{1}{2}} \ \frac{du}{u}
\\
&\qquad =:I+II.
\end{align*}

To estimate $II$ we let $F(y,t):=B_{t,K}f(y)$ and pick $\widetilde{r}> \mathfrak{r}_v(w)\ge 1$ so that $v\in A_{\widetilde{r}}(w)$. Hence, changing the variable $t$ into $2\sqrt{u}t$, applying the fact that $w(B(y,t))\leq w(B(y,2\sqrt{u}t))$ when $u>1/4$, and Proposition \ref{prop:alpha}, we have
\begin{align*}
II
\lesssim
\int_{\frac{1}{4}}^{\infty}e^{-u}u^{K+\frac{1}{2}} \norm{\mathcal{A}_w^{2\sqrt{u}}F}_{L^2(vdw)} \frac{du}{u}
\lesssim
\int_{\frac{1}{4}}^{\infty}e^{-u}u^{K+\frac{1}{2}+\frac{n\,\widehat{r}\,\,\,\widetilde{r}}{4}} \norm{\mathcal{A}_wF}_{L^2(vdw)} \frac{du}{u}
\lesssim 
\norm{\Scal_{K,\hh}^{L_w}f}_{L^2(vdw)}.
\end{align*}

In order to estimate $I$ we start by distinguishing two cases. If $nr_w>(2K+1)p_+( L_w)$, the condition $v\in RH_{(\br{p_+(L_w)}_w^{K,*}/2)'}(w)$
implies that $0<\mathfrak{s}_v(w)<\frac{p_+(L_w)nr_w}{2\,(n r_w-(2K+1)p_+(L_w))}$. Therefore,
it is possible to pick $\varepsilon_1>0$ small enough, $\widehat r \in (r_w,2)$ close enough to $r_w$ (\,$\widehat r=1$ if $w\in A_1$) and $2<\widetilde{q}<p_+(L_w)$ so that
$$
 0<\mathfrak{s}_v(w)<\frac{\widetilde{q}\,n\,\widehat{r}}{2(1+\varepsilon_1)(n\,\widehat{r}-(2K+1)\,\widetilde{q}\,)}.
$$
Besides, there also exists $\varepsilon_2>0$ so that
$$
\widetilde{q}<\frac{\widetilde{q}\,n\,\widehat{r}}{(1+\varepsilon_2)(n\,\widehat{r}-(2K+1)\,\widetilde{q}\,)}.
$$
Take $\varepsilon_0:=\min\{\varepsilon_1,\varepsilon_2\}$ and $s:=\frac{\widetilde{q}\,n\widehat{r}}{2(1+\varepsilon_0)(n\widehat{r}-(2K+1)\,\widetilde{q}\,)}$. Then our choices guarantee that $2<\widetilde{q}<p_+(L_w)$, $\frac{\widetilde{q}}{2}\leq s<\infty$, $1\leq \mathfrak{s}_{v}(w)<s<\infty$, and hence $v\in RH_{s'}(w)$. Also,
\begin{align}\label{adewfdqwrf}
K+\frac{1}{2}+\frac{\widehat{r}\,n}{4s}-\frac{\widehat{r}\,n}{2\,\widetilde{q}}
= \varepsilon_0\br{\frac{\widehat{r}\,n}{2\,\widetilde{q}}-K-\frac{1}{2}}
> \varepsilon_0 \br{\frac{r_w n}{2p_+(L_w)}-K-\frac{1}{2}}>0.
\end{align}

In the other case, $nr_w \leq (2K+1)p_+(L_w)$ and then $\br{p_+(L_w)}_w^{K,*}=\infty$. Recall  then that our assumption on $v$ is simply  $v\in A_\infty(w)$. Fix now $s>\mathfrak{s}_v(w) $ so that $v\in RH_{s'}(w)$. If $w\notin A_1$ we  pick $\widehat r \in (r_w,2)$ (close enough to $r_w$) in such a  way that $1-\frac{r_w}{\widehat r}<\frac{p_+(L_w)}{2s}$, and if $w\in A_1$ we just take $\widehat r=1$. Let $\widetilde q$ satisfy $\max\left\{2,\frac{2sp_+(L_w)}{p_+(L_w)+2s\frac{r_w}{\widehat r}}\right\}<\widetilde{q}<\min
\left\{p_+(L_w),2s\right\}$ with the understanding that $\widetilde{q}=2s$ if $p_+(L_w)=\infty$. All these choices guarantee that  
$2<\widetilde{q}<p_+(L_w)$, $\frac{\widetilde{q}}{2}\leq s<\infty$, $1\leq \mathfrak{s}_v(w)<s<\infty$, and therefore $v\in RH_{s'}(w)$. Moreover, 
 note that from  the lower bound for $\widetilde q$ involving $s$, we have that if $p_+(L_w)<\infty$
\begin{align*}
K+\frac{1}{2}+\frac{\widehat{r}\,n}{4s}-\frac{\widehat{r}\,n}{2\widetilde{q}}>K+\frac{1}{2}-\frac{r_w\,n}{2p_+(L_w)}\geq 0.
\end{align*}
Additionally, if $p_+(L_w)=\infty$, then $K+\frac{1}{2}+\frac{\widehat{r}\,n}{4s}-\frac{\widehat{r}\,n}{2\,\widetilde{q}}=K+\frac{1}{2}>0$.

Putting all the possible cases together we have been able to find $\widetilde{q}$ and $s$ such that $2<\widetilde{q}<p_+(L_w)$, $\frac{\widetilde{q}}{2}\leq s<\infty$, $v\in RH_{s'}(w)$, and
\begin{align}\label{adewfdqwrf*}
K+\frac{1}{2}+\frac{\widehat{r}\,n}{4s}-\frac{\widehat{r}\,n}{2\,\widetilde{q}}
>0.
\end{align}
We can now proceed to estimate $I$. We first apply  H\"older's inequality and \eqref{doublingcondition}
\begin{align}\label{estI:1}
I
&\lesssim 
\int_0^{\frac{1}{4}}u^{K+\frac{1}{2}} \ \br{\int_{\mathbb{R}^n}
\int_0^{\infty}\br{\int_{B(x,t)}\abs{B_{\frac{t}{2\sqrt{u}},K}f(y)}^{\widetilde{q}}\frac{dw(y)}{w(B(y,t))} }^{\frac{2}{\widetilde{q}}} \ \frac{dt}{t}  \ v(x)dw(x)}^{\frac{1}{2}} \frac{du}{u}
\\ \nonumber
&=:
\int_0^{\frac{1}{4}}u^{K+\frac{1}{2}} \ \left(\int_{\mathbb{R}^n} \mathcal{J}(u,x)^2\,v(x)dw(x)\right)^{\frac{1}{2}} \frac{du}{u}.
\end{align}
Besides, note that since $1<\frac{\widetilde{q}}{2}\le s<\infty$, then for $\alpha:=2\sqrt{u}\in (0,1]$ and $q:=\frac{\widetilde{q}}{2}$,
we can apply Proposition \ref{prop:Q} to conclude that 
\begin{align}\label{KKS}
&\int_{\mathbb{R}^n}\mathcal{J}(u,x)^2 v(x) dw(x)
\\ \nonumber
&\qquad=
\int_0^{\infty}\int_{\mathbb{R}^n}\br{\int_{B(x,2\sqrt{u}\frac{t}{2\sqrt{u}})} \abs{B_{\frac{t}{2\sqrt{u}},K}f(y)}^{\widetilde{q}} \frac{dw(y)}{w\br{B\br{y,2\sqrt{u}\frac{t}{2\sqrt{u}}}}}}^{\frac{2}{\widetilde{q}}} \ v(x)dw(x)\frac{dt}{t}
\\  \nonumber
&\qquad\lesssim 
u^{\frac{\widehat{r}\,n}{2s}-\frac{\widehat{r}\,n}{\widetilde{q}}}
\int_{\mathbb{R}^n}\int_0^{\infty}\left(\int_{B(x,\frac{t}{2\sqrt{u}})}\left|B_{\frac{t}{2\sqrt{u}},K}f(y)\right|^{\widetilde{q}} \frac{dw(y)}{w\br{B\br{y,\frac{t}{2\sqrt{u}}}}} \right)^{\frac{2}{\widetilde{q}}}
\frac{dt}{t} \ v(x)dw(x)
\\  \nonumber
&\qquad\lesssim 
u^{\frac{\widehat{r}\,n}{2s}-\frac{\widehat{r}\,n}{\widetilde{q}}}
\int_{\mathbb{R}^n} \int_0^{\infty} \left(\int_{B(x,t)}\left|B_{t,K}f(y)\right|^{\widetilde{q}}\frac{w(y) dy}{w(B(y,t))} \right)^{\frac{2}{\widetilde{q}}} \frac{dt}{t}v(x)dw(x)
\\ \nonumber
&\qquad=: 
u^{\frac{\widehat{r}\,n}{2s}-\frac{\widehat{r}\,n}{\widetilde{q}}} \int_{\mathbb{R}^n} \mathcal{T}(x)^2\ v(x)dw(x),
\end{align}
where in the last inequality we have changed the variable $t$ into $2\sqrt{u}t$. 
By Lemma \ref{off-diag-sg}, $e^{-t L_w}\in \mathcal{O}(L^2(w)-L^{\widetilde{q}}(w))$. Applying this,  \eqref{doublingcondition} and Proposition \ref{prop:alpha}, we get
\begin{align*}
&\br{\int_{\mathbb{R}^n} \mathcal{T}(x)^2\ v(x)dw(x)}^{\frac12}
\lesssim
\br{\int_{\mathbb{R}^n} \int_0^{\infty} \left(\dashint_{B(x,t)}\left|e^{-\frac{t^2}{2}L_w}B_{\frac{t}{\sqrt{2}},K}f(y)\right|^{\widetilde{q}} dw(y) \right)^{\frac{2}{\widetilde{q}}} \frac{dt}{t}v(x)dw(x)}^{\frac12}\nonumber
\\
&\qquad\lesssim 
\sum_{j\geq 1} 2^{j(\theta_1+\theta_2)} e^{-c4^j}
\br{\int_{\mathbb{R}^n} \int_0^{\infty} \dashint_{B(x,2^{j+1}t)}\left|B_{\frac{t}{\sqrt{2}},K}f(y)\right|^2 \frac{dw(y) \, dt}{t}v(x)dw(x)}^{\frac12}
\\
&\qquad\lesssim \sum_{j\geq 1} e^{-c4^j}
\br{\int_{\mathbb{R}^n} \int_0^{\infty} \int_{B(x,2^{j+1}\sqrt{2}t)}\left|B_{t,K}f(x)\right|^2 \frac{dw(y) \, dt}{tw(B(y,t))}v(x)dw(x)}^{\frac12}
\\
&\qquad\nonumber
\lesssim
\norm{\Scal_{K,\hh}^{L_w}f}_{L^2(vdw)}.
\end{align*}
 Notice that in the third estimate we have changed the variable $t$ into $\sqrt{2}t$ and used that $w(B(y,t))\le w(B(x,2^{j+3}t))\lesssim w(B(x,2^{j+1}\sqrt{2}t))$. 
 This, \eqref{estI:1}, and \eqref{KKS} yield, 
\begin{align*}
I
\lesssim
\norm{\Scal_{K,\hh}^{L_w}f}_{L^2(vdw)}
\int_0^{\frac{1}{4}}u^{K+\frac{1}{2}+\frac{\widehat{r}\,n}{4s}-\frac{\widehat{r}\,n}{2\widetilde{q}}}
\frac{du}{u}
\lesssim
\norm{\Scal_{K,\hh}^{L_w}f}_{L^2(vdw)},
\end{align*}
where in the last inequality we have used \eqref{adewfdqwrf*}.
This completes the proof of part $(b)$.

Let us continue  by showing  parts $(c)$ and $(d)$. We need the following auxiliary result in the spirit of \cite[Lemma $3.5$]{AuscherHofmannMartell}  whose proof is given below:
\begin{lemma}\label{lemma:caccio}
For every $K\in \N_0$, $f\in L^2(w)$ and almost every $x\in \mathbb{R}^n$, there holds
\begin{align}\label{GKP}
\Gcal_{K,\pp}^{L_w}f(x)
&\lesssim 
K\left(\iint_{|x-y|<2t} \abs{(t^2L_w)^K e^{-t^2L_w}f(y)}^2 \frac{dw(y) \, dt}{tw(B(y,t))}\right)^{\frac{1}{2}}
\\
  \nonumber
&\qquad\quad + \left(\iint_{|x-y|<2t}\abs{t\nabla_{y,t}(t^2L_w)^K e^{-t^2L_w}f(y)}^2 \frac{dw(y) \, dt}{tw(B(y,t))}\right)^{\frac{1}{2}}
 \\
 \nonumber
 &\qquad\quad+\left(\iint_{|x-y|<2t} \abs{(t^2L_w)^K(e^{-t\sqrt{L_w}}-e^{-t^2L_w})f(y)}^2 \frac{dw(y) \, dt}{tw(B(y,t))}\right)^{\frac{1}{2}}.
\end{align}
Moreover, setting 
\begin{align*}
\mathfrak{G}_{K,\pp}^{L_w}f(x)
:=
\br{\iint_{|x-y|<2t}\abs{\br{t^2L_w}^K \br{e^{-t\sqrt{L_w}}-e^{-t^2L_w}}f(y)}^2 \frac{dw(y) \, dt}{tw(B(y,t))}}^{\frac{1}{2}},
\qquad K\in\N_0,
\end{align*}
the following estimate holds:
\begin{align}\label{est:gfrak}
\norm{\mathfrak{G}_{K,\pp}^{L_w}f}_{L^p(vdw)}\lesssim \norm{\Scal_{K+1,\hh}^{L_w}f}_{L^p(vdw)},
\end{align}
for all $K\in \N_0$, $v\in A_{\infty}(w)$, and $p\in \mathcal{W}_v^w(0,\br{p_+(L_w)}_w^{K,*})$. 

\end{lemma}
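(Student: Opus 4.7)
\emph{For the pointwise bound \eqref{GKP}.} Denote $U_H(y,t):=(t^2L_w)^K e^{-t^2L_w}f(y)$, $U_P(y,t):=(t^2L_w)^K e^{-t\sqrt{L_w}}f(y)$, and $W:=U_P-U_H$. Splitting the Poisson semigroup as $e^{-t\sqrt{L_w}}=e^{-t^2L_w}+(e^{-t\sqrt{L_w}}-e^{-t^2L_w})$ inside $U_P$ and applying $t\nabla_{y,t}$, the gradient of the heat piece $U_H$ is exactly the integrand of Term~2 of \eqref{GKP}. The product rule acting on the outer $t^{2K}$ factor produces a term $2K(t^2L_w)^K e^{-t\sqrt{L_w}}f=2K(U_H+W)$, which contributes to Terms~1 and~3 via the obvious splitting. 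This reduces the proof of \eqref{GKP} to controlling $|t\nabla_{y,t}W|^2$ integrated over the cone of aperture~$1$ by $|U_H|^2+|W|^2$ integrated over the cone of aperture~$2$ (which gives precisely the sum of the squares of Terms~1 and~3).

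\emph{Caccioppoli step.} Write $U_P=t^{2K}\widetilde U_P$ and $U_H=t^{2K}\widetilde U_H$, where $\widetilde U_P:=L_w^K e^{-t\sqrt{L_w}}f$ solves the degenerate elliptic equation $\partial_t^2\widetilde U_P-L_w\widetilde U_P=0$ on $\R_+^{n+1}$ (with the natural matrix $\mathrm{diag}(wA,w)$), while $\widetilde U_H:=L_w^K e^{-t^2L_w}f$ satisfies $\partial_t\widetilde U_H=-2tL_w\widetilde U_H$. The corresponding weighted Caccioppoli inequalities for each of these PDEs, applied on Whitney-type balls of radius $\sim t_0$ at every $(y_0,t_0)\in\Gamma(x)$ and absorbing the $t^{2K}$ product-rule terms via $|t\nabla_{y,t}U_\bullet|^2\lesssim t^{4K+2}|\nabla_{y,t}\widetilde U_\bullet|^2+K^2 t^{4K}|\widetilde U_\bullet|^2$, yield for $\bullet\in\{P,H\}$
\begin{equation*}
\iint_{\Gamma(x)}|t\nabla_{y,t}U_\bullet|^2\,\frac{dw(y)\,dt}{tw(B(y,t))}\lesssim (1+K^2)\iint_{|x-y|<2t}|U_\bullet|^2\,\frac{dw(y)\,dt}{tw(B(y,t))}.
\end{equation*}
Combining this for $\bullet=P$ and $\bullet=H$ with the triangle inequality $|U_P|^2\le 2|U_H|^2+2|W|^2$ completes the proof of \eqref{GKP}. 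The main difficulty lies precisely here: one must simultaneously exploit the elliptic equation satisfied by the Poisson extension of $L_w^K f$ and the parabolic-type equation satisfied by its heat counterpart, while correctly tracking the extra $t^{2K}$ weight; the change from aperture $1$ to aperture $2$ in \eqref{GKP} is exactly the room needed by these Caccioppoli inequalities.

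\emph{Tent-space bound \eqref{est:gfrak}.} Start from the subordination formula \eqref{formula:SF} together with the identity $e^{-(t^2/(4u))L_w}f-e^{-t^2L_w}f=\int_{t^2/(4u)}^{t^2}L_w e^{-sL_w}f\,ds$ (with the obvious sign reversal when $4u<1$). Applying $(t^2L_w)^K$ and changing variables $s=r^2$ gives
\begin{equation*}
(t^2L_w)^K(e^{-t\sqrt{L_w}}-e^{-t^2L_w})f=\frac{2}{\sqrt\pi}\int_0^\infty e^{-u}u^{-1/2}\int_{t/(2\sqrt u)}^{t}\frac{t^{2K}}{r^{2K+1}}(r^2L_w)^{K+1}e^{-r^2L_w}f\,dr\,du.
\end{equation*}
Take the $L^p(vdw)$ norm of the aperture-$2$ tent-space square function on both sides, apply Minkowski's integral inequality in $u$ and Cauchy--Schwarz (or Minkowski) in $r$, and invoke Proposition~\ref{prop:alpha} to convert the resulting aperture-$2$ cone integral of $(r^2L_w)^{K+1}e^{-r^2L_w}f$ into the aperture-$1$ cone integral that defines $\Scal_{K+1,\hh}^{L_w}f$. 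The hypothesis $p\in\mathcal W_v^w(0,(p_+(L_w))_w^{K,*})$ is exactly what makes this change-of-aperture estimate available. The remaining $u$- and $r$-integrals involving $t^{2K}r^{-2K-1}e^{-u}u^{-1/2}$ over $r\in[t/(2\sqrt u),t]$ converge by elementary scaling estimates, yielding \eqref{est:gfrak}.
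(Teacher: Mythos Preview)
Your argument has genuine gaps in both parts.

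\textbf{For \eqref{GKP}.} The ``parabolic Caccioppoli'' you invoke for $\widetilde U_H$ does not exist in the form you need. The equation $\partial_t\widetilde U_H=-2tL_w\widetilde U_H$ is parabolic in $(y,t)$, and the standard Caccioppoli inequality for such equations controls only the \emph{spatial} gradient $\nabla_y\widetilde U_H$; it gives no local $L^2$ bound on $\partial_t\widetilde U_H=-2tL_w\widetilde U_H$, which is a second-order quantity in $y$. For merely bounded measurable $A$ there is no $W^{2,2}$-type estimate available. Even if you drop the $\bullet=H$ claim and rely solely on the (correct) elliptic Caccioppoli for $\widetilde U_P$, you obtain $\mathcal{G}_{K,\pp}^{L_w}f(x)^2\lesssim(1+K^2)\iint_{|x-y|<2t}|U_P|^2\,\frac{dw\,dt}{tw(B(y,t))}$; for $K=0$ the right side is $\iint_{|x-y|<2t}|e^{-t\sqrt{L_w}}f|^2\,\frac{dw\,dt}{tw(B(y,t))}$, which diverges near $t=0$ since $e^{-t\sqrt{L_w}}f\to f$. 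The paper avoids both problems by a different decomposition: it writes the Caccioppoli bilinear form $\iint B\nabla u_K\cdot\overline{\nabla u_K}\,\cdots$ as $\iint B\nabla u_K\cdot\overline{\nabla(u_K-v_K)}\,\cdots+\iint B\nabla u_K\cdot\overline{\nabla v_K}\,\cdots$, handles the second piece by Young's inequality (producing Term~2), and in the first piece integrates by parts using the \emph{elliptic} equation for $u_K$ alone to remove the gradient from $u_K-v_K$ (producing Term~3). No Caccioppoli for $v_K$ is ever needed, and the quantities $|u_K|^2$, $|v_K|^2$ never appear separately---only the difference $|u_K-v_K|^2$ and the gradient $|\nabla v_K|^2$, both of which are integrable for $K=0$.

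\textbf{For \eqref{est:gfrak}.} Your claim that ``the remaining $u$- and $r$-integrals converge by elementary scaling estimates'' fails for small $u$. After Cauchy--Schwarz in $r$ and Fubini, the contribution from $u<\tfrac14$ involves $\int_{2\sqrt u\, r}^{r}\frac{1}{w(B(y,t))}\,\frac{dt}{t}$, which by $A_{\widehat r}$-doubling is at best $\lesssim u^{-n\widehat r/2}w(B(y,r))^{-1}$; the resulting $u$-integral $\int_0^{1/4}u^{-1/2-n\widehat r/4}\,du$ diverges since $n\widehat r\ge 2$. You also misidentify where the hypothesis $p\in\mathcal W_v^w(0,(p_+(L_w))_w^{K,*})$ enters: Proposition~\ref{prop:alpha} (change of aperture) holds for \emph{all} $v\in A_\infty(w)$ and $0<p<\infty$, so it imposes no such restriction. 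In the paper, the restriction arises because one must pass via H\"older to an $L^{\widetilde q}$-average with $2<\widetilde q<p_+(L_w)$, apply Proposition~\ref{prop:Q} (which uses $v\in RH_{s'}(w)$ and yields a gain $\alpha^{\widehat r n(1/s-2/\widetilde q)}$), and then return to $L^2$ via the off-diagonal estimate $e^{-tL_w}\in\mathcal O(L^2(w)\to L^{\widetilde q}(w))$. The convergence condition $K+\tfrac12+\tfrac{\widehat r n}{4s}-\tfrac{\widehat r n}{2\widetilde q}>0$ is precisely what forces $v\in RH_{((p_+(L_w))_w^{K,*}/2)'}(w)$, and extrapolation then gives the full range of $p$.
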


Assuming this lemma momentarily and  applying Proposition \ref{prop:alpha} to the first two terms in the right-hand side of \eqref{GKP},
we conclude, for all $K\in \N_0$, $v\in A_{\infty}(w)$, and $p\in \mathcal{W}_{v}^w(0,\br{p_+(L_w)}_w^{K,*})$,
\begin{align}\label{2GKP}
\norm{\Gcal_{K,\pp}^{L_w}f}_{L^p(vdw)}
\lesssim 
K\norm{\Scal_{K,\hh}^{L_w}f}_{L^p(vdw)}+\norm{\Gcal_{K,\hh}^{L_w}f}_{L^p(vdw)}+\norm{\Scal_{K+1,\hh}^{L_w}f}_{L^p(vdw)}.
\end{align}
For $K\in \N$, we just apply parts $(b)$ and $(c)$ of Theorem \ref{theor:control-SF-Heat} and $(d)$ follows at once. 
For $K=0$, we use part $(a)$ of Theorem \ref{theor:control-SF-Heat} to obtain
$$
\norm{\Gcal_{\pp}^{L_w}f}_{L^p(vdw)}
\lesssim \norm{\Gcal_{\hh}^{L_w}f}_{L^p(vdw)}+\norm{\Scal_{\hh}^{L_w}f}_{L^p(vdw)}
\lesssim \norm{\Gcal_{\hh}^{L_w}f}_{L^p(vdw)},
$$
and this shows part $(c)$.

\begin{proof}[Proof of Lemma \ref{lemma:caccio}]For a fixed $K\in \N_0$,
we start proving \eqref{est:gfrak}. Much as before, it suffices to obtain \eqref{est:gfrak} for $p=2$ and for every 
$v\in RH_{(\br{p_+(L_w)}_w^{K,*}/2)'}(w)$. Fixing such a weight, by the subordination formula \eqref{formula:SF} and Minkowski's inequality, we get
\begin{multline*}
\norm{\mathfrak{G}_{K,\pp}^{L_w}f}_{L^2(vdw)}
\\ \lesssim 
\int_0^{\infty}e^{-u}u^{\frac{1}{2}}\left(\int_{\mathbb{R}^n} \int_0^{\infty}\!\!\int_{B(x,2t)} \abs{(t^2L_w)^K \br{e^{-\frac{t^2}{4u}L_w} -e^{-t^2L_w}}f(y)}^2 \frac{dw(y) \, dt}{tw(B(y,t))}v(x)dw(x)\right)^{\frac{1}{2}}\frac{du}{u}
\\ =: 
\int_0^{\infty}e^{-u}u^{\frac{1}{2}} F(u)\frac{du}{u}
\le \int_0^{\frac{1}{4}}u^{\frac{1}{2}}F(u)\frac{du}{u} +\int_{\frac{1}{4}}^{\infty}e^{-u}u^{\frac{1}{2}}F(u)\frac{du}{u}
=:I+II.
\end{multline*}

We start dealing with $I$. We proceed as in the proof of the corresponding estimate of $I$ for $\Scal_{K,\pp}^{L_w}$. Recall that after considering some cases we ended up finding $\widetilde{q}$ and $s$ such that $2<\widetilde{q}<p_+(L_w)$, $\frac{\widetilde{q}}{2}\leq s<\infty$, $v\in RH_{s'}(w)$, and
\begin{align}\label{adewfdqwrf***}
\theta:=K+\frac{1}{2}+\frac{\widehat{r}\,n}{4s}-\frac{\widehat{r}\,n}{2\,\widetilde{q}}
>0.
\end{align}
For later use choose $\widetilde{\theta}$ so that $0<\widetilde{\theta}<\min\{4\,\theta,1\}$. Then, for every $0<a<1$
\begin{equation}
\int_{a}^{1}t^{4\,\theta-1}\frac{dt}{t}
\leq
\int_{a}^{1}t^{\widetilde{\theta}-1}\frac{dt}{t}
 \le \frac{1}{1-\widetilde \theta}\,a^{\widetilde{\theta}-1}.
\label{eq:THETA-a}
\end{equation}

Fix now $0<u<\frac{1}{4}$, and note that
\begin{align*}
\abs{\br{e^{-\frac{t^2}{4u}L_w} -e^{-t^2L_w}}f} 
=
\abs{\int_{t}^{\frac{t}{2\sqrt{u}}} \partial_r e^{-r^2L_w}f dr}
\lesssim
 \int_{t}^{\frac{t}{2\sqrt{u}}}\big|r^2L_we^{-r^2L_w}f\big|\frac{dr}{r}.
\end{align*}
We set $H_K(y,r):=(r^2L_w)^{K+1}  e^{-r^2L_w}f(y)$. Using the above estimate and applying Minkowski's and H\"older's inequalities, it follows that
\begin{align*}
F(u)&
\lesssim 
\br{\int_{\mathbb{R}^n} \int_0^{\infty}\br{\int_{t}^{\frac{t}{2\sqrt{u}}}\br{\int_{B(x,2t)} \abs{(t^2L_w)^K
 r^2L_w e^{-r^2L_w}f(y)}^{2}\frac{dw(y)}{w(B(y,t))}}^{\frac{1}{2}} \frac{dr}{r}}^2\frac{dt}{t}v(x)dw(x)}^{\frac{1}{2}}
\\ &\lesssim  
u^{-\frac{1}{4}}\br{\int_{\mathbb{R}^n} \int_0^{\infty}\int_{t}^{\frac{t}{2\sqrt{u}}}\int_{B(x,2t)}
|H_K(y,r)|^2 \br{\frac{t}{r}}^{4\,K}\frac{dw(y)}{w(B(y,t))} \ \frac{dr}{r^2}dt\, v(x)dw(x)}^{\frac{1}{2}} 
\\ &\lesssim 
u^{-\frac{1}{4}}\br{\int_{\mathbb{R}^n} \int_0^{\infty}\int_{2\sqrt{u}r}^{r}\int_{B(x,2t)}
|H_K(y,r)|^2 \br{\frac{t}{r}}^{4\,K}\frac{dw(y)}{w(B(y,t))} dt\frac{dr}{r^2}v(x)dw(x)}^{\frac{1}{2}}.
\end{align*}
By \eqref{doublingcondition}, applying H\"older's inequality to the integral in $y$, and changing the variable $t$ into $rt$, we obtain
\begin{align}\label{hatH}
F(u) & \lesssim 
u^{-\frac{1}{4}} \br{\int_{\mathbb{R}^n} \int_0^{\infty}\int_{2\sqrt{u}r}^r \br{\dashint_{B(x,2t)} | H_K(y,r)|^{\widetilde{q}} dw(y)}^{\frac{2}{\widetilde{q}}}  \br{\frac{t}{r}}^{4\,K} \frac{dt\,dr}{r^{2}} v(x)dw(x)}^{\frac{1}{2}}
\\ \nonumber
& \lesssim 
u^{-\frac{1}{4}} \br{\int_{\mathbb{R}^n} \int_0^{\infty}\int_{2\sqrt{u}r}^r \br{\int_{B(x,2t)} |H_K(y,r)|^{\widetilde{q}} \frac{dw(y)}{w(B(y,t))}}^{\frac{2}{\widetilde{q}}} \br{\frac{t}{r}}^{4\,K} \frac{dt\,dr}{r^{2}} v(x)dw(x)}^{\frac{1}{2}}
\\ \nonumber
&\lesssim 
u^{-\frac{1}{4}} \br{\int_{\mathbb{R}^n} \int_0^{\infty}\int_{2\sqrt{u}}^{1} \br{\int_{B(x,2rt)}|H_K(y,r)|^{\widetilde{q}}\frac{dw(y)}{w(B(y,rt))}}^{\frac{2}{\widetilde{q}}} t^{4K} \frac{dt\, dr}{r} v(x)dw(x)}^{\frac{1}{2}}
\\ \nonumber
&=:
u^{-\frac{1}{4}} \br{\int_{\mathbb{R}^n} \widehat{H}(x,u)^2v(x)dw(x)}^{\frac{1}{2}}.
\end{align}
Note that $1<\frac{\widetilde{q}}{2} \leq s<\infty$ and recall that $w\in A_{\widehat r}$, with $\widehat r$ fixed before. Then, for  $\alpha:=t\in (0,1)$ and $q:=\frac{\widetilde{q}}{2}$, we can apply Proposition \ref{prop:Q} and \eqref{pesosineqw:Ap} to obtain
\begin{align}\label{HG}
&\int_{\mathbb{R}^n}\widehat{H}(x,u)^2 v(x)dw(x) 
\lesssim
\int_{2\sqrt{u}}^1\int_{0}^{\infty} \int_{\mathbb{R}^n}\br{\int_{B(x,2rt)}|H_K(y,r)|^{\widetilde{q}} \frac{dw(y)}{w(B(y,2rt))}}^{\frac{2}{\widetilde{q}}}v(x)dw(x)\frac{dr}{r}t^{4K} dt
\\ \nonumber 
&\qquad
\lesssim 
\left(\int_{2\sqrt{u}}^1t^{4K+\frac{\widehat{r}\,n}{s}-\frac{2\,\widehat{r}\,n}{\widetilde{q}}+1}\frac{dt}{t}\right) \int_{0}^{\infty}\int_{\mathbb{R}^n}\br{\int_{B(x,2r)}|H_K(y,r)|^{\widetilde{q}} \frac{dw(y)}{w(B(y,r))}}^{\frac{2}{\widetilde{q}}}v(x)dw(x)\frac{dr}{r}
\\ 
\nonumber 
&
\lesssim
u^{\frac{\widetilde{\theta}-1}{2}}
\int
_{\mathbb{R}^n}\widetilde{H}_K(x)^2 v(x) dw(x),
\end{align}
where we have used \eqref{eq:THETA-a} and where 
$$
\widetilde{H}_K(x):= 
\br{\int_0^{\infty} \br{\int_{B(x,2r)}|H_K(y,r)|^{\widetilde{q}}\frac{dw(y)}{w(B(y,r))}}^{\frac{2}{\widetilde{q}}} \frac{dr}{r}}^{\frac{1}{2}}.
$$
Using that $e^{-t L_w} \in\mathcal{O}(L^2(w)\rightarrow L^{\widetilde{q}}(w))$ by Lemma \ref{off-diag-sg}, and since $H_K(y,r)=2^{K+1}e^{-\frac{r^2}2 L_w}\,H_K\big(y,\tfrac{r}{\sqrt{2}}\big)$, it follows from \eqref{doublingcondition} and Proposition \ref{prop:alpha} that
\begin{align*}
\br{\int_{\mathbb{R}^n}\widetilde{H}_K(x)^2v(x) dw(x)}^{\frac{1}{2}}
&\lesssim 
\sum_{j\geq 1} e^{-c4^j} \br{\int_{\mathbb{R}^n}\int_0^{\infty}\dashint_{B(x,2^{j+2}r)}\big|H_K\big(y,\tfrac{r}{\sqrt{2}}\big)\big|^2
\frac{dw(y) \, dr}{r}v(x)dw(x)}^{\frac{1}{2}}
\\ &\lesssim 
\sum_{j\geq 1} e^{-c4^j} \br{\int_{\mathbb{R}^n}\int_0^{\infty}\int_{B(x,2^{j+2}\sqrt{2}r)}\big|H_K(y,r)\big|^2
\frac{dw(y) \, dr}{rw(B(y,r))}v(x)dw(x)}^{\frac{1}{2}}
\\ &\lesssim 
\sum_{j\geq 1} e^{-c4^j} \br{\int_{\mathbb{R}^n}\Scal_{K+1,\hh}^{L_w}f(x)^2v(x)dw(x)}^{\frac{1}{2}}
\\ &\lesssim 
\norm{\Scal_{K+1,\hh}^{L_w}f}_{L^2(vdw)}.
\end{align*}
 Note that in the second estimate we have changed the variable $r$ into $\sqrt{2}r$ and used that $w(B(y,r))\le w(B(x,2^{j+4}r))\lesssim w(B(x,2^{j+2}\sqrt{2}r))$.
 This, \eqref{hatH}, and \eqref{HG} give
\begin{align*}
F(u)
\lesssim u^{\frac{\widetilde{\theta}-2}{4}} \,\norm{\Scal_{K+1,\hh}^{L_w}f}_{L^2(vdw)},
\end{align*}
which in turn yields
\begin{align*}
I=
\int_0^{\frac{1}{4}}u^{\frac{1}{2}}F(u)\frac{du}{u}
\lesssim
\left(\int_0^{\frac{1}{4}}u^{\frac{\widetilde{\theta}}{4}}\frac{du}{u}\right)\,\norm{\Scal_{K+1,\hh}^{L_w}f}_{L^2(vdw)}
\lesssim
\,\norm{\Scal_{K+1,\hh}^{L_w}f}_{L^2(vdw)},
\end{align*}
since $\widetilde{\theta}>0$.

\medskip
To estimate $II$, we fix $\frac{1}{4}< u<\infty$ and observe that
\begin{align*}
\abs{(e^{-\frac{t^2}{4u}L_w} -e^{-t^2L_w})f}
= \abs{\int_{\frac{t}{2\sqrt{u}}}^{t} \partial_r e^{-r^2L_w}f dr}
\lesssim \int_{\frac{t}{2\sqrt{u}}}^{t}\abs{r^2L_we^{-r^2L_w}f} \frac{dr}{r}.
\end{align*}
Set $T_{r^2,K}:=(r^2L_w)^{K+1}e^{-r^2L_w}$ and pick $\widetilde{r}> \mathfrak{r}_v(w)\ge 1$ so that $v\in A_{\widetilde{r}}(w)$. Then, applying Minkowski's integral inequality, H\"older's inequality,  Fubini's Theorem, 
and Proposition \ref{prop:alpha}, we have 
\begin{align*}
F(u) &\lesssim
\br{\int_{\mathbb{R}^n} \int_{0}^{\infty}\br{\int_{\frac{t}{2\sqrt{u}}}^{t}\br{\int_{B(x,2t)} \abs{(t^2L_w)^KT_{r^2,0}
f(y)}^2 \frac{dw(y)}{w(B(y,t))}}^{\frac{1}{2}} \frac{dr}{r}}^2\frac{dt}{t}v(x)dw(x)}^{\frac{1}{2}}
\\ &\lesssim
\br{\int_{\mathbb{R}^n} \int_{0}^{\infty} \int_{\frac{t}{2\sqrt{u}}}^{t}\int_{B(x,2t)} \abs{(t^2L_w)^KT_{r^2,0} f(y)}^2 \frac{dw(y)}{w(B(y,t))} \frac{dr}{r^2}dt\,v(x)dw(x)}^{\frac{1}{2}}
\\ &=
\br{\int_{\mathbb{R}^n} \int_{0}^{\infty} \int_r^{2\sqrt{u}r}\int_{B(x,2t)} \abs{(t^2L_w)^KT_{r^2,0}f(y)}^2 \frac{dw(y)\,dt}{w(B(y,t))}\frac{dr}{r^2}v(x)dw(x)}^{\frac{1}{2}}
\\ & \lesssim 
u^K \br{\int_{\mathbb{R}^n} \int_{0}^{\infty} \int_r^{2\sqrt{u}r}\int_{B(x,4\sqrt{u}r)} \abs{T_{r^2,K}f(y)}^2 \frac{dw(y)}{w(B(y,r))} \, dt\frac{dr}{r^2}v(x)dw(x)}^{\frac{1}{2}}
\\ &\lesssim
u^{K+\frac{1}{4}}\br{\int_{\mathbb{R}^n} \int_{0}^{\infty}\int_{B(x,4\sqrt{u}r)}\abs{T_{r^2,K}f(y)}^2 \frac{dw(y)\,dr}{rw(B(y,r))}v(x)dw(x)}^{\frac{1}{2}}
\\ &\lesssim 
u^{K+\frac{1}{4}+\frac{n\,\widehat{r}\,\,\,\widetilde{r}}{4}}\br{\int_{\mathbb{R}^n} \int_{0}^{\infty}\int_{B(x,r)} \abs{T_{r^2,K}f(y)}^2 \frac{dw(y)\,dr}{rw(B(y,r))}v(x)dw(x)}^{\frac{1}{2}}
\\ &=
u^{K+\frac{1}{4}+\frac{n\,\widehat{r}\,\,\,\widetilde{r}}{4}}\,\norm{\Scal_{K+1,\hh}^{L_w}f}_{L^2(vdw)}.
\end{align*}
Hence,
$$
II
=
\int^{\infty}_{{\frac{1}{4}}} e^{-u}u^{\frac{1}{2}}F(u)\frac{du}{u}
\lesssim
\left(\int_{\frac14}^{\infty} e^{-u} u^{K+\frac{3}{4}+\frac{n\,\widehat{r}\,\,\,\widetilde{r}}{4}} \frac{du}{u}\right)\,\norm{\Scal_{K+1,\hh}^{L_w}f}_{L^p(vdw)}
\lesssim
\norm{\Scal_{K+1,\hh}^{L_w}f}_{L^p(vdw)}.
$$
This completes the proof of \eqref{est:gfrak}.

We finally show \eqref{GKP}. The proof of this inequality follows the lines of that of \cite[Lemma 3.5]{AuscherHofmannMartell}.  
If $K=0$ take $f\in L^2(w)$, and $f_0:=f$. If $K\geq 1$, we assume that $f$ is in the domain of $L_w^K$  (we explain at the end of the proof how to pass to general functions in $L^2(w)$),  and define $f_K:=L_w^Kf$. Besides, consider
$u_K:=L_w^Ke^{-t\sqrt{L_w}}f=e^{-t\sqrt{L_w}}f_K$, and $v_K:=L_w^Ke^{-t^2L_w}f=e^{-t^2L_w}f_K$. 
Notice that
$$
t\nabla_{y,t}(t^{2K}u_{K})
=
2K t^{2K}v_{K}  \vec{e} +2K t^{2K}(u_{K}-v_{K}) \vec{e}
+
t^{2K}(t\nabla_{y,t}u_{K})
$$
with $\vec{e}= (0, \ldots, 0, 1)$. The first and second terms give the first and third terms on the right hand side of 
 \eqref{GKP}, respectively. Then we need to control the third term which in turn is controlled by 
$$
I(x):= \iint  |\nabla_{y,t}u_K (y,t)|^2 \varphi^2\bigg( \frac{x-y}t\bigg) \frac{t^{4\,K+1}dw(y)\,dt}{w(B(y,t))},
$$
where $\varphi$ is a smooth function such that $0\leq \varphi\leq 1$, $\varphi\equiv 1$ on $B(0,1)$ and $\varphi\equiv 0$ in $\re^n\setminus B(0,2)$. 
Write $\varphi_w(x,t):=\int_{\R^n}\varphi^2\left(\frac{x-z}{t}\right)w(z)\,dz$ and note that since $w\in A_2$ (and hence is doubling), we have
\begin{equation}
\frac{w(B(x,t))}{w(B(y,t))}
 \approx 1
\qquad\mbox{and}\qquad
1
\le
\frac{\varphi_w(x,t)}{w(B(x,t))}
\le \frac{w(B(x,2t))}{w(B(x,t))}
 \lesssim 1
\label{eq:jrjuju}
\end{equation}
whenever $|x-y|<2t$. For $0<r<R/10<\infty$, take $\psi_{r,R}(t)=\zeta(t/r)(1-\zeta(t/R))$, where $\zeta(t)$ is a smooth function that satisfies:
$\zeta(t)\equiv 0$ if $t\leq 1/2$ and $\zeta(t)\equiv 1$ if $t\geq 2$. Using all this and the monotone convergence theorem, it suffices to estimate
$$
I_{r,R}(x):= \iint  |\nabla_{y,t}u_K (y,t)|^2 \varphi^2\bigg( \frac{x-y}t\bigg) \psi_{r,R}^2(t) \frac{t^{4\,K+1}dw(y)\,dt}{\varphi_w(x,t)}.
$$
Let $B$ be the $(n+1)\times (n+1)$ block matrix with $A$ being one block and 1 the other one,  i.e., $B=\left(\begin{array}{cc}A & 0 \\0 & 1\end{array}\right)$.  Since the matrix $B$ is uniformly elliptic, $I_{r,R}(x) \le C(\lambda) \Re \calI_{r,R}(x)$ with
$$
\calI_{r,R}(x):= \iint  B(y)\nabla_{y,t}u_K(y,t) \cdot \overline{ \nabla_{y,t}u_K(y,t)}\,  \varphi^2\bigg( \frac{x-y}t\bigg)  \psi_{r,R}^2(t) \frac{t^{4\,K+1}dw(y)\,dt}{\varphi_w(x,t)}.
$$
Next, we write
\begin{align*}
\calI_{r,R}(x)
&=
\iint  B(y)\nabla_{y,t}u_K(y,t)\cdot  \overline{ \nabla_{y,t}(u_K-v_K)(y,t)} \, \varphi^2\bigg( \frac{x-y}t\bigg) \psi_{r,R}^2(t)  \frac{t^{4\,K+1}dw(y)\,dt}{\varphi_w(x,t)}
 \\
&\qquad\qquad + \iint  B(y)\nabla_{y,t}u_K(y,t) \cdot  \overline{ \nabla_{y,t}v_K(y,t)}\,  \varphi^2\bigg( \frac{x-y}t\bigg) \psi_{r,R}^2(t) \frac{t^{4\,K+1}dw(y)\,dt}{\varphi_w(x,t)}
\\
&=: \calI_{r,R}^1(x) + \calI_{r,R}^2(x).
\end{align*}
In the last integral, distribute  the product $\varphi\psi_{r,R}$ on each gradient term  and  use Young's inequality with  $\epsilon$ to obtain a bound
$$
|\calI_{r,R}^2(x)|
\le
 \epsilon I_{r,R}(x)
 +
 C\|B\|_\infty^2 \epsilon^{-1} \iint_{|x-y| <  2t}  |\nabla_{y,t}v_K(y,t)|^2  \frac{t^{4\,K+1}dw(y)\,dt}{\varphi_w(x,t)}.
$$
Using that
$$
t^{2K}(t\nabla_{y,t}v_{K}(y,t))
=
t\nabla_{y,t}(t^{2K}v_{K}(y,t))-2K t^{2K}v_{K}(y,t)  \vec{e},
$$
and \eqref{eq:jrjuju}
we can  obtain
\begin{multline*}
|\calI_{r,R}^2(x)|
\le
 \epsilon I_{r,R}(x)
 +
 C\|B\|_\infty^2\epsilon^{-1} K \iint_{|x-y| <  2t}  |t^{2K} v_K(y,t)|^2  \frac{dw(y)\,dt}{tw(B(y,t))}
\\
  +
 C\|B\|_\infty^2 \epsilon^{-1} \iint_{|x-y| <  2t}  |t\nabla_{y,t}(t^{2K }v_K(y,t))|^2  \frac{dw(y)\,dt}{tw(B(y,t))}.
\end{multline*}

To estimate $\mathcal{I}^1_{r,R}$ we first observe that $w^{-1}\,\div_{y,t}\big(w(y) B(y)\nabla_{y,t}u_K(y,t)\big)=0$ in the weak sense in $\re^{n+1}_+$ with respect to the inner product in $L^2(\re^{n+1}_+,dw\,dt)$. This and Leibniz's rule give
$$
\mathcal{I}_{r,R}^1(x)
=
-\iint B(y)\nabla_{y,t}u_K(y,t)\cdot\nabla_{y,t}
\left\{
\,\varphi^2\left(\frac{x-y}{t}\right)\psi_{r,R}^2(t)\frac{t^{4K+1}}{\varphi_w(x,t)}\right\}\,\overline{(u_K-v_K)(y,t)}\,dw(y)\,dt.
$$
To estimate this we first observe that easy calculations lead to
\begin{align*}
F(y,t)
&:=
\left|\nabla_{y,t}
\left\{
\,\varphi^2\left(\frac{x-y}{t}\right)\psi_{r,R}^2(t)\frac{t^{4K+1}}{\varphi_w(x,t)}\right\}\right|
\\
&\lesssim
\frac{t^{4K}\psi_{r,R}(t)}{\varphi_w(x,t)}
\varphi\left(\frac{x-y}{t}\right)\,
\Bigg\{
\psi_{r,R}(t)
\left|(\nabla \varphi)\left(\frac{x-y}{t}\right)\right|
+
\psi_{r,R}(t)
\varphi\left(\frac{x-y}{t}\right)
+ t|\psi_{r,R}'(t)|\varphi\left(\frac{x-y}{t}\right)
\\
&
\qquad+
\psi_{r,R}(t)\frac{|x-y|}{t}\left|(\nabla \varphi)\left(\frac{x-y}{t}\right)\right|
+
t\,\psi_{r,R}(t)\varphi\left(\frac{x-y}{t}\right)\frac{|\partial_t(\varphi_w(x,t))|}{\varphi_w(x,t)}
\Bigg\}.
\end{align*}
Note that $|\psi_{r,R}'(t)|\lesssim t^{-1}$ uniformly  in  $r$ and $R$. Also, using \eqref{eq:jrjuju} and the properties of $\varphi$ it follows that $|\partial_t(\varphi_w(x,t))|\lesssim t^{-1}\varphi_w(x,t)$. These and the way that $\varphi$ and $\psi_{r,R}$ have been chosen easily lead to
$$
F(y,t)\lesssim
\frac{t^{4K}\psi_{r,R}(t)}{\varphi_w(x,t)}
\varphi\left(\frac{x-y}{t}\right)\,\Theta\left(\frac{x-y}{t}\right),
$$
where $\Theta=\varphi+|\nabla\varphi|$ is a bounded function supported in $B(0,2)$. We can use this, 
Young's  inequality with $\epsilon>0$, and \eqref{eq:jrjuju} to estimate $\mathcal{I}_{r,R}^1(x)$:
\begin{align*}
&|\mathcal{I}_{r,R}^1(x)|
\\
&\ \le
\iint \left\{|\nabla_{y,t}u_K(y,t)|\varphi\left(\frac{x-y}{t}\right)\psi_{r,R}(t)\right\}
\left\{C\,\|B\|_\infty\,t^{-1}\Theta\left(\frac{x-y}{t}\right)|(u_K-v_K)(y,t)|\right\}
\,\frac{t^{4\,K+1}dw(y)\,dt}{\varphi_w(x,t)}
\\
&\ \le
 \epsilon I_{r,R}(x)
+
C\|B\|_\infty^2\epsilon^{-1}
\iint_{|x-y|<2t} \big|t^{2K}(u_K-v_K)(y,t)\big|^2
\,\frac{dw(y)\,dt}{tw(B(y,t))}.
\end{align*} 
Collecting the estimates that we have obtained  and  recalling the definitions of $u_K$, $v_K$,  we conclude
\begin{align*}
|\calI_{r,R}(x)|
&\le
2\,\epsilon I_{r,R}(x)
+
 C\|B\|_\infty^2\epsilon^{-1} K 
\iint_{|x-y| <  2t} \abs{(t^2L_w)^K e^{-t^2L_w}f(y)}^2 \frac{dw(y) \, dt}{tw(B(y,t))}
\\
 &
\qquad
+
 C\|B\|_\infty^2 \epsilon^{-1} 
\iint_{|x-y| <  2t} \abs{t\nabla_{y,t}(t^2L_w)^K e^{-t^2L_w}f(y)}^2\frac{dw(y)\,dt}{tw(B(y,t))}
\\
&\qquad
+
C\|B\|_\infty^2\epsilon^{-1}
\iint_{|x-y|<2t}
\abs{(t^2L_w)^K(e^{-t\sqrt{L_w}}-e^{-t^2L_w})f(y)}^2
\,\frac{dw(y)\,dt}{tw(B(y,t))},
\end{align*}
where all the constants are uniform  in  $r$, $R$, and $x$. Recalling that  $I_{r,R}(x) \le C(\lambda) \Re \calI_{r,R}(x)$ we can hide the first term in the right-hand side of the previous estimate  (which is finite thanks to the different cut-off functions)  by choosing $\epsilon$ small enough. Letting then $r\searrow 0$ and $R\nearrow \infty$,   one derives \eqref{GKP} for functions $f\in L^2(w)$ when $K=0$ and for functions $f$ is in the domain of $L_w^K$ when $K\ge 1$. 

 To complete the proof we explain how to extend \eqref{GKP} to arbitrary functions in $L^2(w)$. Let us fix $K\ge 1$ and write $\mathcal{T}$  to denote the sublinear operator
defined from the right-hand  side of the inequality \eqref{GKP}. Note that combining Proposition \ref{prop:alpha}, \eqref{est:gfrak}, Theorem \ref{theor:control-SF-Heat}, and the trivial case $p=2$ of Proposition \ref{thm:boundednessGH} we conclude that for all $f\in L^2(w)$
\begin{equation}\label{eq:frqweaf}
\|\mathcal{T}f\|_{L^2(w)}
\lesssim
\|\Gcal_{\hh}^{L_w}f\|_{L^2(w)}
\lesssim
\|f\|_{L^2(w)}.
\end{equation}
We fix $f\in L^2(w)$ and our goal is to show that $\Gcal_{K,\pp}^{L_w}f(x)\lesssim \mathcal{T}f(x)$ for almost every $x\in\re^n$. To that end, we use  that the domain of $L_w^K$ is dense in $L^2(w)$ and find a sequence $\{f_j\}_j$ contained in the domain of $L_w^K$ such that $f_j\to f$ in $L^2(w)$ as $j\to\infty$. 
Without loss of generality we may also assume that $\mathcal{T}(f-f_j)(x)\to 0$ for a.e. $x\in\re^n$ as $j\to\infty$. Indeed,  from \eqref{eq:frqweaf} we know that $\mathcal{T}(f-f_j)\to 0$ in $L^2(w)$ as $j\to\infty$ and therefore, passing to a subsequence, the convergence occurs almost everywhere. 
On the other hand,  $t\nabla_{y,t}(t\sqrt{L_w}\,)^{2K} e^{-t\sqrt{L_w}}$ is uniformly bounded on $L^2(w)$ and it follows from \eqref{doublingcondition} that 
for every $N,j\ge 1$ and every $x\in\re^n$
\begin{align*}
&\left(\int_{N^{-1}}^N \int_{|x-y|<t}|t\nabla_{y,t}(t\sqrt{L_w}\,)^{2K} e^{-t\sqrt{L_w}}f(y)|^2 \frac{dw(y) \, dt}{tw(B(y,t))}\right)^{\frac{1}{2}}
\\
&
 \qquad\lesssim
\frac{\log N}{w(B(x,N^{-1}))}\|f-f_j\|_{L^2(w)}
+
\left(\iint_{|x-y|<t}|t\nabla_{y,t}(t\sqrt{L_w}\,)^{2K} e^{-t\sqrt{L_w}}f_j(y)|^2 \frac{dw(y) \, dt}{tw(B(y,t))}\right)^{\frac{1}{2}}
\\
&
\qquad\lesssim
\frac{\log N}{w(B(x,N^{-1}))}\|f-f_j\|_{L^2(w)}
+
\mathcal{T}f_j(x)
\\
&
\qquad\lesssim
\frac{\log N}{w(B(x,N^{-1}))}\|f-f_j\|_{L^2(w)}
+
\mathcal{T}(f-f_j)(x)
+
\mathcal{T}f(x),
\end{align*}
where in the second inequality we have used  \eqref{GKP} for $f_j$, which by construction is in the domain of $L_w^K$. Next, we first let  $j\to\infty$ and then $N\to\infty$ to conclude as desired that \eqref{GKP} holds for $f\in L^2(w)$. The proof of Lemma \ref{lemma:caccio} is now complete. 
\end{proof}


\section{Unweighted boundedness for square functions}\label{section:unweighted}

In this section we prove unweighted estimates, that is,  on $L^p(\re^n)$, for the conical square functions associated with the heat or Poisson semigroups associated with $L_w$. These will be obtained as a consequence of their weighted boundedness on $L^p(vdw)$ by simply taking $v=w^{-1}$ on Theorems \ref{thm:SF-Heat} and \ref{thm:SF-Poisson}. In order to check that the corresponding result can be applied we will need to make additional assumptions on $w\in A_2$. In particular, we are interested in specific examples of power weights $|x|^{\alpha}$, $-n<\alpha<n$, and their associated family of degenerate operators $L_{|x|^\alpha}=-|x|^{-\alpha}\div(|x|^{\alpha}\,A\,\nabla)$.

Before stating our results we need to recall some definitions. Given $w\in A_\infty$, the ``critical'' exponents $r_w$ and $s_w$ were defined in \eqref{eq:defi:rw}. By ``self-improving'' properties of the $A_p$ and $RH_s$ classes it follows that if $w\in A_r$ with $r>1$ then $r_w<r$ and, analogously, if $w\in RH_{q'}$ with $q>1$ then $s_w<q$.

We also note that as observed above there is a
``duality'' relationship between the weighted and unweighted
$A_p$ and $RH_s$ conditions:  $v=w^{-1} \in A_p(w)$ if and only if $w \in
RH_{p'}$ and $v=w^{-1}\in RH_{s'}(w)$ if and only if $w\in A_{s}$. We also recall that $2_w^*=\frac{2\,n\,r_w}{n\,r_w-2}$ if $2<n\,r_w$ and $2_w^*=\infty$ otherwise.

We start considering the conical square functions associated with the heat semigroup.

\begin{corollary}\label{cor-unweightedHeat}
Let $L_w$ be a degenerate elliptic operator with $w\in A_2$. If $p>(2^*_w)'\,s_w$ then the conical square functions $\Scal_{m,\hh}^{L_w}$ for $m\in\N$, and $\Gcal_{m,\hh}^{L_w}$ and $\Grm_{m,\hh}^{L_w}$ for $m\in\mathbb{N}_0$,  are all bounded on $L^p(\re^n)$. In particular, this is the case in the following situations:
\begin{list}{$(\theenumi)$}{\usecounter{enumi}\leftmargin=1cm \labelwidth=1cm\itemsep=0.2cm\topsep=.2cm \renewcommand{\theenumi}{\alph{enumi}}}

\item If $\frac{2n}{n+2}<p<\infty$ and $w\in A_1\cap  RH_{\big(\frac{p(n+2)}{2n}\big)'}$. 

\item If $1<r\le 2$, $\frac{2nr}{nr+2}\le p<\infty$ and $w\in A_r\cap  RH_{\big(\frac{p(nr+2)}{2nr}\big)'}$. 
\end{list}
Hence, all the previous square functions are bounded on $L^2(\re^n)$ if $w\in A_r\cap RH_{\frac{n}2 r+1}$ for $1\le r\le 2$.

\end{corollary}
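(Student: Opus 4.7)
The plan is to deduce the unweighted estimates from Theorem \ref{thm:SF-Heat} by applying it with the choice $v = w^{-1} \in A_\infty(w)$, so that $v\,dw = w^{-1}\cdot w\,dx = dx$ and therefore $L^p(vdw) = L^p(\re^n)$. All the difficulty is then reduced to correctly interpreting the ranges in Theorem \ref{thm:SF-Heat} with this particular choice of $v$.

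First I would compute the critical exponent $\mathfrak{r}_{w^{-1}}(w)$. Using the duality $w^{-1} \in A_r(w) \iff w \in RH_{r'}$ recalled in Section \ref{section:prelim}, one immediately obtains
\[
\mathfrak{r}_{w^{-1}}(w) = \inf\{r : w^{-1}\in A_r(w)\} = \inf\{r: w \in RH_{r'}\} = s_w.
\]
Consequently, by \eqref{intervalrsw},
\[
\mathcal{W}_{w^{-1}}^w(p_-(L_w), \infty) = (p_-(L_w)\, s_w, \infty),
\]
and combining this with the bound $p_-(L_w) \le (2_w^*)'$ from \eqref{p-p+}, any $p$ with $p > (2_w^*)'\, s_w$ lies in $\mathcal{W}_{w^{-1}}^w(p_-(L_w),\infty)$. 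Theorem \ref{thm:SF-Heat} then yields the $L^p(\re^n)$-boundedness of $\Scal_{m,\hh}^{L_w}$, $\Gcal_{m,\hh}^{L_w}$, and $\Grm_{m,\hh}^{L_w}$ in the stated ranges of $m$, proving the main assertion.

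To verify $(b)$, suppose $1 < r \le 2$, $p \ge 2nr/(nr+2)$, and $w \in A_r \cap RH_{(p(nr+2)/(2nr))'}$. The self-improvement of the $A_r$ class gives $r_w < r$; since $x\mapsto 2nx/(nx+2)$ is increasing, this yields $(2_w^*)' \le 2nr_w/(nr_w+2) < 2nr/(nr+2)$ when $nr_w>2$, and $(2_w^*)' = 1$ otherwise (which is an even stronger bound). Similarly, the reverse Hölder self-improvement produces $s_w < p(nr+2)/(2nr)$ when $p > 2nr/(nr+2)$, and at the endpoint $p = 2nr/(nr+2)$ the hypothesis becomes $w\in RH_\infty$, forcing $s_w = 1$. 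In either case, multiplying the two inequalities gives $(2_w^*)'\,s_w < p$, so the previous step applies. Case $(a)$ is the special sub-case $r = 1$ (with $r_w = 1$ coming directly from $w\in A_1$, rather than from self-improvement). Finally, setting $p = 2$ in the reverse Hölder exponent from $(b)$ gives
\[
\frac{p(nr+2)}{2nr} = \frac{nr+2}{nr} = 1 + \frac{2}{nr},
\qquad
\left(1+\tfrac{2}{nr}\right)' = \frac{nr}{2} + 1,
\]
so the hypothesis $w\in A_r \cap RH_{nr/2 + 1}$ for some $1\le r \le 2$ suffices for the $L^2(\re^n)$-boundedness.

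There is no genuine analytic obstacle here: the entire proof rests on Theorem \ref{thm:SF-Heat}, and the only thing to verify is the algebraic/weight-theoretic translation between the weighted classes $A_r(w)$, $RH_s(w)$ (with underlying measure $dw$) and the unweighted classes $A_r$, $RH_s$, together with the self-improvement properties of each class. The mildly delicate point — and the one I would be most careful about — is the strict inequality $(2_w^*)'\,s_w < p$ at the endpoint $p = 2nr/(nr+2)$ in $(b)$, which relies crucially on the self-improvement giving $r_w < r$ to produce the strict inequality $(2_w^*)' < 2nr/(nr+2)$.
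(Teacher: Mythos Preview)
Your proposal is correct and follows essentially the same approach as the paper: both apply Theorem~\ref{thm:SF-Heat} with the choice $v=w^{-1}$, translate the condition $p\in\mathcal{W}_{w^{-1}}^w(p_-(L_w),\infty)$ via the duality $w^{-1}\in A_r(w)\iff w\in RH_{r'}$, and then verify cases $(a)$ and $(b)$ using the self-improvement properties of the $A_r$ and $RH_s$ classes. Your packaging via $\mathfrak{r}_{w^{-1}}(w)=s_w$ and the interval formula \eqref{intervalrsw} is slightly more streamlined than the paper's direct verification that $w^{-1}\in A_{p/p_-(L_w)}(w)$, but the content is identical.
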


\begin{proof}
Fix $p>(2^*_w)'\,s_w$ and note that by \eqref{p-p+} we have $p>(2^*_w)'\ge p_-(L_w)$. Also, $s_w <p/(2^*_w)'\le p/p_-(L_w)$ and hence $w\in RH_{\left(\frac{p}{p_-(L_w)}\right)'}$ or, equivalently, $v:=w^{-1}\in A_{\frac{p}{p_-(L_w)}}(w)$. These facts imply that $p\in\mathcal{W}_v^w(p_-(L_w),\infty)$, and then, Theorem  \ref{thm:SF-Heat} gives immediately the boundedness on $L^p(v\,dw)=L^p(\re^n)$ of all the conical square functions in the statement. 

To see that the situation in $(a)$ falls within the conditions stated above, we first consider the case $n=2$.  Our current assumptions give $r_w=1$ and $w\in RH_{\big(\frac{p(n+2)}{2n}\big)'}=RH_{p'}$, hence  $2^*_w=\infty$ and $p>s_w$ as desired.   On the other hand, if $n\ge 3$, using again that $r_w=1$ it follows that $(2^*_w)'=(2n/(n-2))'=2n/(n+2)$. In turn, $w\in RH_{\big(\frac{p(n+2)}{2n}\big)'}$ implies that 
$p>(2^*_w)'\,s_w$.

We now examine the conditions in $(b)$. Note that in that case since $w\in A_r$ with $r>1$, then $r_w<r$. This implies that 
$(2_w^*)'<\frac{2\,n\,r}{n\,r+2}$. Then, the assumptions on $p$ and $w$ easily yield that $p>(2^*_w)'\,s_w$.

Concerning the estimates on $L^2(\re^n)$ we just need to combine $(a)$ and $(b)$ with $p=2$.
\end{proof}

Next, we consider the conical square functions associated with the Poisson semigroup. Let us make some comments first. Given $w\in A_\infty$, $1\le q<\infty$, and $K\ge 1$ set
\begin{equation}\label{SobConj}
q_w^{K,\star}
:=\left\{
\begin{array}{ll}
\dfrac{qn r_w}{n r_w-Kq}, &\quad\mbox{ if}\quad Kq<nr_w,
\\[10pt]
\infty, &\quad\mbox{ if}\quad Kq\ge nr_w.
\end{array}
\right.
\end{equation}
When $K=1$ we write $q_w^\star:=q_w^{1,\star}$.  Notice that according to the notation introduced above $2_w^*=2_w^\star$ and also $\br{p_+(L_w)}_w^{K,*}=\br{p_+(L_w)}_w^{2K+1,\star}$ for every $K\in\N_0$ (cf. \eqref{indexPoisson}). On the other hand, it is easy to see that $q_w^{K+1,\star}=\br{q_w^{K,\star}}_w^{\star}=\br{q_w^{\star}}_w^{K,\star}$ for every $K\in\N$. 

It might be convenient to observe that 
$$
q_w^{K,\star}=\left[\br{\frac1q-\frac K{nr_w}}^{+}\right]^{-1},
$$ 
where $r^{+}=\max\{r,0\}$. Using this, it is easy to see that $q_w^{K+1,\star}\ge q_w^{K,\star}$ for every $K\in\N$. Moreover, if $K\in\N$ is fixed, then $q_w^{K,\star}$ is an increasing function of  $q$. Hence, since $p_+(L_w)\ge 2_w^{*}=2_w^{\star}$ (see \eqref{p-p+}), it follows that for all $K\in \N$, one has 
$(p_+(L_w))_w^{K,*}\ge  (p_+(L_w))_w^* \ge 2_w^{**}$, where, as defined above,
\begin{equation}
2_w^{**}
:=
\br{2_w^{\star}}_w^{\star}
=
2^{2,\star}_w
=
\begin{cases}
\frac{2nr_w}{nr_w-4}&\textrm{if}\quad 4<nr_w,
\\
\infty&\textrm{if}\quad 4\geq nr_w.
\end{cases}
\label{eq:defi-2-Sob-Sob}
\end{equation}

In the following result we present some $L^p(\re^n)$ estimates for  the conical square functions associated with the Poisson semigroup. As seen from Theorem \ref{thm:SF-Poisson}, these square functions  are ``better'' as the parameter $K$ increases ---since $\mathcal{W}_v^w(p_-(L_w),\br{p_+(L_w)}_w^{K,*})\subset \mathcal{W}_v^w(p_-(L_w),\br{p_+(L_w)}_w^{K+1,*})$, for all $K\in \N_0$. To simplify the statements we will compute the conditions that arise from the case $K=0$, and, in particular, all the square functions in Theorem \ref{thm:SF-Poisson} will be bounded under the same conditions. Having said that, if one targets a particular square functions with a given parameter $K$, the following result and its proof can be sharpened to provide both better ranges of $p$'s where the $L^p(\re^n)$-boundedness happens and also bigger classes of  weights, see Remark \ref{rem:sharpen} below.

\begin{corollary}\label{cor-unweightedPoisson}
Let $L_w$ be a degenerate elliptic operator with $w\in A_2$. If $ (2^*_w)'\,s_w<p<\frac{2^{**}_w}{r_w}$ then the conical  square   functions $\Scal_{K,\pp}^{L_w}$ for $K\in\N$, and $\Gcal_{K,\pp}^{L_w}$ and $\Grm_{K,\pp}^{L_w}$ for $K\in\mathbb{N}_0$,
are all bounded on $L^p(\re^n)$. In particular, this is the case in the following situations:
\begin{list}{$(\theenumi)$}{\usecounter{enumi}\leftmargin=1cm \labelwidth=1cm\itemsep=0.2cm\topsep=.2cm \renewcommand{\theenumi}{\alph{enumi}}}

\item If $\frac{2n}{n+2}<p<\infty$ for $n\le 4$ or if $\frac{2n}{n+2}<p<\frac{2n}{n-4}$ for $n>4$, and $w\in A_1\cap  RH_{\big(\frac{p(n+2)}{2n}\big)'}$.

\item If $1<r\le 2$, $\frac{2nr}{nr+2}\le p<\infty$ for $nr\le 4$ or $\frac{2nr}{nr+2} \le p \le \frac{2n}{nr-4}$ for $nr>4$, and $w\in A_r\cap  RH_{\big(\frac{p(nr+2)}{2nr}\big)'}$. 
\end{list}
Hence, all the previous square functions are bounded on $L^2(\re^n)$ if $w\in A_r\cap RH_{\frac{n}2r+1}$, for $1\le r\le \min\left\{2,1+\frac{4}{n}\right\}$.
\end{corollary}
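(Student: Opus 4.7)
The plan is to obtain the unweighted $L^p(\re^n)$ bounds as a direct consequence of Theorem \ref{thm:SF-Poisson} applied with the choice $v := w^{-1}$, which makes $v\,dw = dx$ and hence $L^p(v\,dw) = L^p(\re^n)$. First I would check that $v = w^{-1} \in A_\infty(w)$: since $w \in A_\infty$, there exists $q > 1$ with $w \in RH_{q'}$, equivalently $w^{-1} \in A_q(w) \subset A_\infty(w)$.

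The main step consists of verifying that $p \in \mathcal{W}_{w^{-1}}^w(p_-(L_w), (p_+(L_w))_w^{K,*})$ under the hypothesis $(2^*_w)' s_w < p < 2^{**}_w/r_w$. By the duality between weighted and unweighted $A_p/RH_s$ conditions, the two defining conditions of $\mathcal{W}_{w^{-1}}^w$ amount to $p > p_-(L_w) s_w$ and $p < (p_+(L_w))_w^{K,*}/r_w$. For the lower bound I would combine $p_-(L_w) \le (2^*_w)'$ from \eqref{p-p+} with the hypothesis. For the upper bound I would note that $K \mapsto (p_+(L_w))_w^{K,*}$ is non-decreasing, and that $(p_+(L_w))_w^* \ge 2^{**}_w$ because $p_+(L_w) \ge 2^*_w$ by \eqref{p-p+} together with the monotonicity of the map $q \mapsto q_w^\star$; hence it suffices to settle the most restrictive case $K = 0$. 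Once these two conditions are confirmed, Theorem \ref{thm:SF-Poisson} immediately yields the boundedness of all the listed conical square functions on $L^p(\re^n)$.

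What remains is to check that the specific assumptions in (a) and (b) imply $(2^*_w)' s_w < p < 2^{**}_w/r_w$, which is a direct computation. In (a), $w \in A_1$ forces $r_w = 1$, so $(2^*_w)' = 2n/(n+2)$ (with the convention $(2^*_w)' = 1$ when $n \le 2$), and $2^{**}_w$ equals $\infty$ iff $nr_w = n \le 4$, else $2n/(n-4)$; the hypothesis $w \in RH_{(p(n+2)/(2n))'}$ is equivalent via the infimum definition of $s_w$ to $p > (2^*_w)' s_w$, and the range on $p$ is exactly the $2^{**}_w$-condition. In (b), self-improvement of $A_r$ with $r > 1$ yields $r_w < r$ strictly, hence $(2^*_w)' = 2nr_w/(nr_w + 2) < 2nr/(nr+2) \le p$, and similarly $2^{**}_w/r_w > 2n/(nr - 4) \ge p$ when $nr > 4$, giving the required strict inequalities.

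Finally, specialising to $p = 2$, the hypothesis $w \in RH_{(p(nr+2)/(2nr))'}$ simplifies via $((nr+2)/(nr))' = nr/2 + 1$, while $2 < 2^{**}_w/r_w$ forces $r_w \le 1 + 4/n$ and hence the restriction $r \le \min\{2, 1 + 4/n\}$. I foresee no substantial obstacle; the only subtle point is tracking strict versus non-strict inequalities when moving between $A_r$ membership and bounds on $r_w$ via self-improvement, and similarly for the $RH_s$ classes, and verifying the boundary endpoints $nr = 4$ and $p = 2nr/(nr+2)$.
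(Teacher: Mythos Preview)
Your proposal is correct and follows essentially the same approach as the paper: take $v=w^{-1}$, verify $p\in\mathcal{W}_{w^{-1}}^w(p_-(L_w),(p_+(L_w))_w^{K,*})$ via the duality $\mathfrak{r}_{w^{-1}}(w)=s_w$, $\mathfrak{s}_{w^{-1}}(w)=r_w$ and the bounds $p_-(L_w)\le(2^*_w)'$, $(p_+(L_w))_w^*\ge 2^{**}_w$, then specialize to (a), (b), and $p=2$. One small point: in your sketch of (b) you write $(2^*_w)'<2nr/(nr+2)\le p$, but the required lower bound is $(2^*_w)'\,s_w<p$; you need to invoke the $RH_{(p(nr+2)/(2nr))'}$ hypothesis (giving $s_w\cdot 2nr/(nr+2)<p$) exactly as you did in (a), and then combine with $(2^*_w)'<2nr/(nr+2)$.
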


\begin{remark}\label{rem:sharpen}
 Let us mention that in the scenario $(b)$ when $nr>4$ it could happen that there is no value of $p$ satisfying the required conditions unless $r$ is sufficiently close to $1$ depending on dimension. This happens because in the previous result we allow small values of $K$. Indeed, if we just fix $K\in\mathbb{N}$, the same argument, with appropriate changes, will give the range $ (2^*_w)'\,s_w<p<\frac{2^{2(K+1),\star}_w}{r_w}$. In particular, in $(a)$ we will have
$\frac{2n}{n+2}<p<\infty$ for $n\le 4(K+1)$ and $\frac{2n}{n+2}<p<\frac{2n}{n-4(K+1)}$ if $n>4(K+1)$. Analogously, in the context of $(b)$ we would obtain 
$\frac{2nr}{nr+2}\le p<\infty$ for $nr\le 4(K+1)$ or $\frac{2nr}{nr+2} \le p \le \frac{2n}{nr-4(K+1)}$ for $nr>4(K+1)$. In particular, taking $K\ge \max\{\frac{n-5}4,0\}$ the latter range will be non-empty regardless of $r$. Further details and the precise statements are left to the interested reader. 
\end{remark}

\begin{proof}
Using the ideas in the proof of Corollary \ref{cor-unweightedHeat} and the previous comments we need to see that if we set $v:=w^{-1}$ then $p\in  \mathcal{W}_v^w(p_-(L_w),\br{p_+(L_w)}_w^{*})$. This amounts to checking that $p_-(L_w)<p<\br{p_+(L_w)}_w^{*}$ and also that
$$
w^{-1}\in A_{\frac{p}{p_-(L_w)}}(w)\cap RH_{\left(\frac{\br{p_+(L_w)}_w^{*}}{p}\right)'}(w)
\quad\Longleftrightarrow\quad
w\in A_{\frac{\br{p_+(L_w)}_w^{*}}{p}}\cap RH_{\left(\frac{p}{p_-(L_w)}\right)'}.
$$
Let us observe that in the proof of Corollary \ref{cor-unweightedHeat} we took care on the lower bound for $p$ and the membership of $w$ to a reverse Hölder class by assuming that $p>(2^*_w)'\,s_w$. Now we need to look at the upper bound and the membership to a Muckenhoupt class. That is, we need to see that our assumption $p<2^{**}_w/r_w$ guarantees that $p<\br{p_+(L_w)}_w^{*}$ and that $w\in A_{\frac{\br{p_+(L_w)}_w^{*}}{p}}$. But, this follows at once from the estimates $p<2^{**}_w/r_w\le \br{p_+(L_w)}_w^{*}/r_w\le \br{p_+(L_w)}_w^{*}$.

Much as before, we now see that the situations described in $(a)$ and $(b)$ give the desired restrictions on $p$ depending on $s_w$ and $r_w$. We start with $(a)$. We showed in the proof of Corollary \ref{cor-unweightedHeat} that $p>2n/(n+2)$ and $w\in A_1\cap  RH_{\big(\frac{p(n+2)}{2n}\big)'}$ imply that $p>(2^*_w)'\,s_w$. On the other hand, taking into account that $r_w=1$ in the present scenario, we see that the upper bound assumed in $p$ can be rewritten as $p<2^{**}_w$ as desired.

Turning our attention to the scenario in $(b)$, and looking again at the proof of Corollary \ref{cor-unweightedHeat}, we know that 
if $1<r\le 2$, $\frac{2nr}{nr+2}\le p<\infty$ and $w\in A_r\cap  RH_{\big(\frac{p(nr+2)}{2nr}\big)'}$ then $p>(2^*_w)'\,s_w$. Again, since $w\in A_r$ with $r>1$ it follows that $r_w<r$. Note that if $nr\le 4$ then $nr_w<nr<4$ and hence $2^{**}_w=\infty$ in which case we can take $p$ as larger as we wish. On the other hand, if $nr>4$ then one can easily see that $p\le 2n/(nr-4)<2^{**}_w/r_w$ as desired. 

Concerning the estimates on $L^2(\re^n)$ we just need to combine $(a)$ and $(b)$ with $p=2$.
\end{proof}

\bigskip

Finally, we consider the case of power weights.
Define now 
$w_\alpha(x):=|x|^{\alpha}$, $\alpha>-n$; this restriction guarantees that
$w_\alpha$ is locally integrable.   We can exactly determine the
Muckenhoupt $A_p$ and reverse H\"older $RH_s$ classes of these weights
in terms of $\alpha$:   if $-n< \alpha\le 0$, then $w_{\alpha}\in A_1$; for
$1<p<\infty$,  $w_{\alpha}\in A_p$ if $-n<\alpha <n\,(p-1)$.  Furthermore,  if $0\le
\alpha<\infty$, $w_{\alpha}\in RH_\infty$;  for  $1<q<\infty$, $w_{\alpha}\in RH_q$,  if
$-n/q<\alpha<\infty$. Hence, we easily see that
\begin{equation}\label{sw-rw:w-alpha}
r_{w_\alpha}=\max\bbr{1,1+\frac{\alpha}{n}} \quad \text{and} \quad s_{w_\alpha}=\max\bbr{1,\br{1+\frac{\alpha}{n}}^{-1}}.
\end{equation}  
Using all these and Corollaries \ref{cor-unweightedHeat} and \ref{cor-unweightedPoisson}  we obtain the following result whose proof is left to the interested reader.

\begin{corollary}\label{cor-PW}
Let $A$ be  an $n\times n$ complex-valued matrix that satisfies the uniform ellipticity condition \eqref{eq:elliptic-intro} and consider $L_{w_\alpha}=-w_{\alpha}^{-1}\div(w_{\alpha}\,A\,\nabla)$ with $-n<\alpha<n$.

\begin{list}{$(\theenumi)$}{\usecounter{enumi}\leftmargin=1cm \labelwidth=1cm\itemsep=0.2cm\topsep=.2cm \renewcommand{\theenumi}{\alph{enumi}}}

\item For $-\frac{2n}{n+2}<\alpha<n$, all the square functions in Theorem \ref{thm:SF-Heat} (the ones associated with the heat semigroup) are bounded on $L^2(\re^n)$;

\item For $-\frac{2n}{n+2}<\alpha<\min\{n,4\}$, all the square functions in Theorem \ref{thm:SF-Poisson} (the ones associated with the Poisson semigroup) are bounded on $L^2(\re^n)$.
\end{list}
\end{corollary}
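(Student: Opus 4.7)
The plan is to read off the conclusion from Corollaries~\ref{cor-unweightedHeat} and~\ref{cor-unweightedPoisson} applied to $w=w_\alpha$, since the sufficient condition in each one is precisely ``there exists $r$ in some explicit interval with $w\in A_r\cap RH_{\frac{n}{2}r+1}$''. Because the Muckenhoupt and reverse Hölder classes of power weights are completely described by $\alpha$, the whole argument reduces to elementary book-keeping on the parameter $r$; there is no genuine obstacle, and the only care needed is to track when the admissible interval for $r$ is non-empty.

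First I would record the translation of the two membership conditions for $w_\alpha$ into inequalities for $r$. Using $w_\alpha\in A_1\Leftrightarrow -n<\alpha\le 0$, $w_\alpha\in A_r$ (for $r>1$) $\Leftrightarrow -n<\alpha<n(r-1)$, and $w_\alpha\in RH_q$ (for $1<q<\infty$) $\Leftrightarrow\alpha>-n/q$, the condition $w_\alpha\in A_r\cap RH_{\frac{n}{2}r+1}$ becomes the pair
\begin{equation*}
r\ge \max\bbr{1,1+\alpha/n} \quad (\text{strict if }\alpha>0), \qquad \alpha>-\frac{2n}{nr+2}.
\end{equation*}

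For part (a), Corollary~\ref{cor-unweightedHeat} permits any $r\in[1,2]$. I would split into the cases $\alpha\le 0$ and $\alpha>0$. When $\alpha\le 0$, the natural choice is $r=1$: the $A_1$ condition is free and the binding constraint is $\alpha>-\frac{2n}{n+2}$. When $\alpha>0$, I would pick any $r\in(1+\alpha/n,2]$, which is non-empty exactly when $\alpha<n$; the reverse Hölder condition is automatic because $-\frac{2n}{nr+2}<0<\alpha$. Putting the two cases together produces the stated range $-\frac{2n}{n+2}<\alpha<n$.

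For part (b), I would repeat the same dichotomy but with the tighter admissible interval $r\in[1,\min\{2,1+4/n\}]$ coming from Corollary~\ref{cor-unweightedPoisson}. The case $\alpha\le 0$ is identical (take $r=1$, which always lies in this interval, giving again $\alpha>-\frac{2n}{n+2}$). For $\alpha>0$, the requirement $r\in(1+\alpha/n,\min\{2,1+4/n\}]$ now forces $1+\alpha/n<\min\{2,1+4/n\}$, i.e.\ $\alpha<n\cdot\min\{1,4/n\}=\min\{n,4\}$; the reverse Hölder condition is again automatic. Combining yields the claimed range $-\frac{2n}{n+2}<\alpha<\min\{n,4\}$, completing the proof.
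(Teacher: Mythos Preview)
Your proposal is correct and follows exactly the route the paper indicates: the paper leaves the proof to the reader, pointing to Corollaries~\ref{cor-unweightedHeat} and~\ref{cor-unweightedPoisson} together with the explicit Muckenhoupt and reverse H\"older indices of $w_\alpha$ recorded in \eqref{sw-rw:w-alpha}. Your case split on the sign of $\alpha$ and the choice of $r$ reproduces precisely the intended argument.
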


\appendix

\section{Extrapolation on weighted measure spaces}\label{appendix}

In this section we present some extrapolation results where the underlying measure space is $(\R^n,w)$ with $w\in  A_{\infty}$. The statements and proofs are quite similar to the euclidean setting with the Lebesgue measure. As in  \cite{CruzMartellPerez}, we write the extrapolation theorem in terms of pairs of functions. To set the stage consider $\mathcal{F}$ a family of pairs $(f,g)$ of non-negative, measurable functions that are not identically zero. Given such a family $\mathcal{F}$, $0<p<\infty$, and a weight $v\in A_\infty(w)$, when we write
$$
\int_{\R^n}f(x)^{p}v(x)dw(x)\leq  C_{w,v,p}  \int_{\R^n}g(x)^{p}v(x)dw(x), \quad  (f,g)\in \mathcal{F},
$$ 
we mean that this inequality holds for all pairs $(f,g)\in \mathcal{F}$ and that the constant $  C_{w,v,p}  $ depends  
only on $p$, the $A_\infty(w)$ constant of $v$ (and the $A_\infty$ character of $w$ which is ultimately fixed). Note that in \cite{CruzMartellPerez} such inequalities appear both in the hypotheses and in the conclusion of the extrapolation results and hold for all pairs $(f,g)\in\mathcal{F}$ for which the left hand sides are finite. Here we do not make such assumptions and, in particular, we do have that the infiniteness of the left-hand side will imply that of the right-hand one. This formulation is more convenient for our purposes and, as pointed out in \cite[Section 3.1]{MartellPrisuelos}, it follows from the formulation
where the inequalities hold for pairs for which the left hand sides are finite.

The following result for $w=1$ can be found in \cite[Chapter 2]{CruzMartellPerez} and \cite[Section 3.1]{MartellPrisuelos}. The proof
can be easily obtained by adapting the arguments there replacing everywhere the Lebesgue measure by the measure $w$ and the Hardy-Littlewood maximal function by its ``weighted'' version $\mathcal{M}^w$ introduced in \eqref{weightedHLM}. Further details are left to the interested reader.

\begin{theorem}\label{theor:extrapol}
Let $\mathcal{F}$ be a given family of pairs $(f,g)$ of non-negative and not identically zero measurable functions.
\begin{list}{$(\theenumi)$}{\usecounter{enumi}\leftmargin=1cm
\labelwidth=1cm\itemsep=0.2cm\topsep=.2cm
\renewcommand{\theenumi}{\alph{enumi}}}

\item Suppose that for some fixed exponent $p_0$, $1\le p_0<\infty$, and every weight $v\in A_{p_0}(w)$,
\begin{equation*}
\int_{\mathbb{R}^{n}}f(x)^{p_0}\,v(x)dw(x)
\leq
 C_{w,v,p_0}
\int_{\mathbb{R}^{n}}g(x)^{p_0}\,v(x)dw(x),
\qquad\forall\,(f,g)\in{\mathcal{F}}.
\end{equation*}
Then, for all $1<p<\infty$, and for all $v\in A_p(w)$,
\begin{equation*}
\int_{\mathbb{R}^{n}}f(x)^{p}\,v(x)dw(x)
\leq
C_{w,v,p}
\int_{\mathbb{R}^{n}}g(x)^{p}\,v(x)dw(x),
\qquad\forall\,(f,g)\in{\mathcal{F}}.
\end{equation*}

\item Suppose that for some fixed exponent $q_0$, $1\le q_0<\infty$, and every weight $v\in RH_{q_0'}(w)$,
\begin{equation*}
\int_{\mathbb{R}^{n}}f(x)^{\frac1{q_0}}\,v(x)dw(x)
\leq
 C_{w,v,q_0}
\int_{\mathbb{R}^{n}}g(x)^{\frac1{q_0}}\,v(x)dw(x),
\qquad\forall\,(f,g)\in{\mathcal{F}}.
\end{equation*}
Then, for all $1<q<\infty$ and for all $v\in RH_{q'}(w)$,
\begin{equation*}
\int_{\mathbb{R}^{n}}f(x)^{\frac1q}\,v(x)dw(x)
\leq
C_{w,v,q}
\int_{\mathbb{R}^{n}}g(x)^{\frac1q}\,v(x)dw(x),
\qquad\forall\,(f,g)\in{\mathcal{F}}.
\end{equation*}

\item Suppose that for some fixed exponent $r_0$, $0< r_0<\infty$, and every weight $v\in A_\infty(w)$,
\begin{equation*}
\int_{\mathbb{R}^{n}}f(x)^{r_0}\,v(x)dw(x)
\leq
 C_{w,v,r_0}
\int_{\mathbb{R}^{n}}g(x)^{r_0}\,v(x)dw(x),
\qquad\forall\,(f,g)\in{\mathcal{F}}.
\end{equation*}
Then, for all $0<r<\infty$ and for all $v\in A_\infty(w)$,
\begin{equation*}
\int_{\mathbb{R}^{n}}f(x)^{r}\,v(x)dw(x)
\leq
C_{w,v,r}
\int_{\mathbb{R}^{n}}g(x)^{r}\,v(x)dw(x),
\qquad\forall\,(f,g)\in{\mathcal{F}}.
\end{equation*}
 \end{list}
\end{theorem}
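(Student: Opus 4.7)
Plan of proof: Since $w\in A_\infty$ is doubling, $(\R^n,|\cdot|,dw)$ is a space of homogeneous type. The core observation is that, in this base space, $A_p(w)$ and $RH_s(w)$ play exactly the roles of the classical Muckenhoupt and reverse Hölder classes and that the weighted maximal operator $\mathcal{M}^w$ of \eqref{weightedHLM} is the natural Hardy-Littlewood operator, bounded on $L^p(v\,dw)$ for $v\in A_p(w)$ (Remark~\ref{remark:weightedHLM}). Under this dictionary, the Rubio de Francia extrapolation machinery in \cite{CruzMartellPerez} and \cite[Sec.~3.1]{MartellPrisuelos} transfers verbatim with $dx$ replaced by $dw$ and $\mathcal{M}$ replaced by $\mathcal{M}^w$ throughout.

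First I would assemble the three structural ingredients needed to drive the classical arguments: (i) the $L^p(v\,dw)$-boundedness for $1<p<\infty$ and the weak-$(1,1)$ endpoint of $\mathcal{M}^w$ characterizing $A_p(w)$ and $A_1(w)$, respectively; (ii) the duality $v\in A_p(w)\iff v^{1-p'}\in A_{p'}(w)$; and (iii) a Jones-type factorization $v\in A_p(w)\iff v=v_1\,v_2^{1-p}$ with $v_1,v_2\in A_1(w)$. Each of these follows by the standard Coifman-Fefferman/Calderón-Zygmund arguments, which rely only on the doubling property of $dw$ and on a Vitali-type covering lemma, both available in the present setting.

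With these tools in hand, part (a) is proved by the Rubio de Francia algorithm. For $v\in A_p(w)$ with $p>p_0$, dualize as
\[
\|f\|_{L^p(v\,dw)}^{p_0}=\sup_{h\geq 0}\int_{\R^n} f^{p_0}\,h\,v\,dw,
\]
the supremum taken over non-negative $h$ of unit norm in $L^{(p/p_0)'}(v\,dw)$, and majorize $h$ by the iterate
\[
\mathcal{R}h(x)=\sum_{k\geq 0}\frac{(\mathcal{M}^w)^k h(x)}{2^k\,\|\mathcal{M}^w\|^k_{L^{(p/p_0)'}(v\,dw)}},
\]
which, by construction, satisfies $\mathcal{M}^w(\mathcal{R}h)\leq 2\|\mathcal{M}^w\|\,\mathcal{R}h$ and hence lies in $A_1(w)$ with uniformly bounded constant. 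Combining $\mathcal{R}h$ with the Jones factorization of $v$ yields an $A_{p_0}(w)$ weight to which the hypothesis on $p_0$ applies, closing the argument; the case $p<p_0$ is dual via (ii). Part (b) is entirely analogous, performing the factorization on the reverse Hölder side (equivalently, one substitutes $(f,g)\leadsto(f^{1/q_0},g^{1/q_0})$ and invokes part (a) in dual form). Part (c) is then formal: raising an instance of the hypothesis to the power $r_0/r$ and absorbing the resulting change of weight using (a) or (b) produces the stated conclusion for arbitrary $r\in(0,\infty)$ and arbitrary $v\in A_\infty(w)$.

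Main obstacle: the only step that is not a purely formal iteration is verifying Jones' factorization in the base space $(\R^n,dw)$. However, this factorization is known to hold in any space of homogeneous type under only the doubling condition, so the task reduces to quoting the corresponding general statement. Once Jones' factorization is available, every subsequent step reduces to an iteration of the $\mathcal{M}^w$ bounds from Remark~\ref{remark:weightedHLM}, and the adaptation of the arguments in \cite{CruzMartellPerez} is mechanical, which justifies the authors' choice of only sketching the proof.
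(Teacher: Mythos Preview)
Your proposal is correct and aligns with the paper's own treatment: the paper gives no detailed proof either, simply stating that one adapts the arguments of \cite{CruzMartellPerez} and \cite[Section~3.1]{MartellPrisuelos} by replacing $dx$ with $dw$ and $\mathcal{M}$ with $\mathcal{M}^w$, which is precisely the dictionary you describe.

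One small remark: you flag Jones factorization as the main obstacle, but the proofs in \cite{CruzMartellPerez} that the paper points to actually bypass it entirely. The Rubio de Francia iteration there is set up so that the iterated weight combines with $v$ to land directly in $A_{p_0}(w)$ (using only that $A_1(w)$ weights multiplied into an $A_p(w)$ weight stay in the right class), without invoking a factorization of $v$. So the adaptation is even more mechanical than your sketch suggests: only ingredient~(i) from your list---the $L^p(v\,dw)$ boundedness of $\mathcal{M}^w$ for $v\in A_p(w)$, provided by Remark~\ref{remark:weightedHLM}---is genuinely needed, together with the elementary duality~(ii). Your route via Jones factorization is of course also valid (and does hold in spaces of homogeneous type), but it is a slight detour relative to the reference the paper invokes.
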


The following result is a version of \cite[Proposition 3.30]{MartellPrisuelos}  in  our current setting.

\begin{proposition}\label{prop:Q} 
Let $w\in A_{r}$ and $v\in RH_{s'}(w)$ with $1\le r,s<\infty$. For every $1\le q\le s$, $0<\alpha\leq 1$ and $t>0$, there holds
\begin{multline}\label{G-alpha}
\int_{\mathbb{R}^n}\left(\int_{B(x,\alpha t)}|h(y,t)| \, \frac{dw(y)}{w(B(y,\alpha t))} \right)^{\frac{1}{q}}v(x)dw(x)
\\ \lesssim 
\alpha^{nr\left(\frac{1}{s}-\frac{1}{q}\right)} 
\int_{\mathbb{R}^n}\left(\int_{B(x,t)}|h(y,t)| \, \frac{dw(y)}{w(B(y,t))} \right)^{\frac{1}{q}}v(x)dw(x).
\end{multline}
\end{proposition}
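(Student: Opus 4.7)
The proof follows the strategy of \cite[Proposition 3.30]{MartellPrisuelos}, which treats the case $w\equiv 1$. The adaptation to the weighted underlying measure $(\mathbb{R}^n, dw)$ is essentially a systematic replacement of Lebesgue measure by $dw$ and of the Hardy--Littlewood maximal function by its $w$-analogue $\mathcal{M}^w$ throughout the argument.

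First, I would exploit that $w\in A_r$ is doubling: for $|y-x|<\rho$ one has $w(B(y,\rho))\approx w(B(x,\rho))$ with constants depending only on $[w]_{A_r}$. Hence the inner integrals at both scales $\alpha t$ and $t$ are comparable (uniformly) to the $w$-averages $\dashint_{B(x,\rho)}|h(\cdot,t)|\,dw$, and the proof reduces to comparing such $w$-averages at the two scales in the $L^{1/q}(v\,dw)$ norm.

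Second, I would dispatch the case $q=1$ by Fubini's theorem, which yields
\[
\int \dashint_{B(x,\alpha t)}|h(y,t)|\,dw(y)\,v(x)\,dw(x)
=
\int |h(y,t)|\,\dashint_{B(y,\alpha t)}v(x)\,dw(x)\,dw(y),
\]
and similarly at scale $t$. The proof then reduces to the pointwise comparison
\[
\dashint_{B(y,\alpha t)}v\,dw
\lesssim
\alpha^{nr(1/s-1)}\dashint_{B(y,t)}v\,dw,
\]
which is obtained by chaining \eqref{pesosineqw:Ap} (from $w\in A_r$, giving $w(B(y,t))/w(B(y,\alpha t))\lesssim \alpha^{-nr}$) with \eqref{pesosineqw:RHq} applied in the weighted space $(\mathbb{R}^n,w)$ to $v\in RH_{s'}(w)$ (giving $vw(B(y,\alpha t))/vw(B(y,t))\lesssim (w(B(y,\alpha t))/w(B(y,t)))^{1/s}$). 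Rearranging produces the factor $\alpha^{-nr(1-1/s)}=\alpha^{nr(1/s-1)}$, which is the sharp exponent when $q=1$.

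Third, for general $q\in(1,s]$, the naive pointwise $A_r$-bound $\dashint_{B(x,\alpha t)}|h|\,dw\le C\alpha^{-nr}\dashint_{B(x,t)}|h|\,dw$ yields only $\alpha^{-nr/q}$ after taking $1/q$-th powers and integrating, which misses the gain $\alpha^{nr/s}$ that must come from the $RH_{s'}(w)$-structure of $v$. This is the main obstacle. Its resolution, following \cite{MartellPrisuelos}, proceeds by localizing the LHS to a cover of $\mathbb{R}^n$ by balls of radius $t$ with bounded overlap, applying Hölder's inequality with exponents $q$ and $q'$ against the measure $v\,dw$ on each such ball in order to linearize the $1/q$-th power, invoking the linear $q=1$ estimate from the previous step, and finally using the reverse-Hölder property for $v\in RH_{s'}(w)$ together with the $A_r$-property of $w$ to extract the remaining $\alpha^{nr/s}$-factor. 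The rest of the argument is bookkeeping of the $\alpha$-dependence, identical to the unweighted version, which is why the authors can legitimately claim that the proof follows mutatis mutandis by replacing the Lebesgue measure by $dw$.
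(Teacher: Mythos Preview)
Your $q=1$ argument is correct and matches the paper: Fubini reduces matters to the pointwise comparison $\dashint_{B(y,\alpha t)}v\,dw\lesssim\alpha^{nr(1/s-1)}\dashint_{B(y,t)}v\,dw$, which follows from $v\in RH_{s'}(w)$ and $w\in A_r$.

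For $q>1$, however, the paper does \emph{not} proceed by covering and H\"older; it extrapolates from the $q=1$ case via Theorem~\ref{theor:extrapol}(b). Writing $G^\alpha(x,t)=\int_{B(x,\alpha t)}|h(y,t)|\,\frac{dw(y)}{w(B(y,\alpha t))}$ and $G=G^1$, the $q=1$ estimate, valid for every $v_0\in RH_{s_0'}(w)$ and every $s_0\ge 1$, reads $\int f^{1/s_0}v_0\,dw\lesssim\int g^{1/s_0}v_0\,dw$ for the pair $(f,g)=\bigl(G^\alpha(\cdot,t)^{s_0},\,\alpha^{nr(1-s_0)}G(\cdot,t)^{s_0}\bigr)$; extrapolation replaces $1/s_0$ by $1/s$ for every $v\in RH_{s'}(w)$, and the choice $s_0=s/q$ (admissible since $q\le s$) gives exactly $\alpha^{nr(1/s-1/q)}$. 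Your scheme does not reach this exponent: localizing to balls $B_j$ of radius $t$, applying Jensen (since $1/q\le 1$) to get $\int_{B_j}(G^\alpha)^{1/q}v\,dw\le vw(B_j)^{1/q'}\bigl(\int_{B_j}G^\alpha v\,dw\bigr)^{1/q}$, and then invoking the $q=1$ bound inside the $1/q$-th power produces at best $\alpha^{nr(1/s-1)/q}$, which is off from the target by a factor $\alpha^{nr/(sq')}$. After the Jensen step the remaining quantities no longer carry any $\alpha$-dependence, so there is nowhere to recover this missing factor; the ``step 4'' you describe has no mechanism to do so. The essential feature of the extrapolation route is that it applies the $q=1$ estimate with a \emph{variable} reverse-H\"older index $s_0$ and correspondingly different weights, and it is precisely the freedom to set $s_0=s/q$ that delivers the sharp $\alpha$-dependence. (Incidentally, \cite{MartellPrisuelos} also proceeds by extrapolation, not by covering.)
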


\begin{proof}
We fix $t>0$, $0< \alpha \le 1$ and set
$$
G^{\alpha}(x,t):=\int_{B(x,\alpha t)}|h(y,t)| \,\frac{dw(y)}{w(B(y,\alpha t))}.
$$
For simplicity, $G(x,t):=G^1(x,t)$. 
Then, for any $1\le s_0<\infty$ and $v_0\in RH_{s_0'}(w)$, we have
\begin{align}\label{G-alpha:extr}
\int_{\mathbb{R}^n}G^{\alpha}(x,t)v_0(x)dw(x)
&=
\int_{\mathbb{R}^n} |h(y,t)|\, \frac{v_0w(B(y,\alpha t))}{w(B(y,\alpha t))} dw(y)
\\  \nonumber
&\lesssim 
\int_{\mathbb{R}^n} |h(y,t)|\,v_0w(B(y,t))\br{\frac{w(B(y,\alpha t))}{w(B(y,t))}}^{\frac{1}{s_0}}\,\frac{dw(y)}{w(B(y,\alpha t))}
\\  \nonumber
&=
\int_{\mathbb{R}^n} \int_{B(x,t)} |h(y,t)|\,\br{\frac{w(B(y,\alpha t))}{w(B(y,t))}}^{\frac{1}{s_0}-1}\,\frac{dw(y)}{w(B(y,t))} v_0(x)dw(x)
\\  \nonumber
&\lesssim 
\alpha^{n\,r\,\br{\frac{1}{s_0}-1}} \int_{\mathbb{R}^n} \int_{B(x,t)} |h(y,t)|\,\,\frac{dw(y)}{w(B(y,t))} v_0(x)dw(x)
\\  \nonumber
&=
\alpha^{n\,r\,\br{\frac{1}{s_0}-1}} \int_{\mathbb{R}^n} G(x,t) v_0(x)dw(x).
\end{align}
Note that the two inequalities follow from \eqref{pesosineq:RHq} and \eqref{pesosineqw:Ap}, respectively, and the second equality is obtained by using  Fubini's theorem. Let us observe that \eqref{G-alpha:extr} is the desired estimate when $q=1$.

To prove the case $q>1$ we next extrapolate from \eqref{G-alpha:extr}. Consider $\mathcal{F}$ the family of pairs
$$
(f,g)=\br{G^\alpha(\cdot,t)^{s_0}, \alpha^{n\,r\,(1-s_0)}\,G(\cdot,t) ^{s_0}},
$$
and notice that \eqref{G-alpha:extr} immediately gives that, for every $v_0\in RH_{s_0'}(w)$, $1\le s_0<\infty$,
$$
\int_{\R^n} f(x)^\frac1{s_0}v_0(x)\,dw(x) \leq C\,\int_{\R^n} g(x)^\frac1{s_0}v_0(x)\,dw(x),
$$
where $C$ does not depend on $\alpha$ or $t$. Next, apply Theorem \ref{theor:extrapol}, part $(b)$, to conclude that, for every $1<s<\infty$ and for every $v\in RH_{s'}(w)$,
$$
\int_{\R^n} G^{\alpha}(x,t)^{\frac{s_0}{s}}v(x)\,dw(x) \leq
C\,\alpha^{n\,r\,\left(\frac{1}{s}-\frac{s_0}{s}\right)}\,\int_{\R^n} G(x,t)^{\frac{s_0}{s}}v(x)\,dw(x),
$$
where $C$ does not depend on $\alpha$ or $t$ and where $1\le s_0<\infty$ is arbitrary.  From this, given $v\in RH_{s'}(w)$,  $1<q\le s<\infty$, we can conclude \eqref{G-alpha} by taking $s_0=s/q$.
\end{proof}

\end{document}